\newcommand{\bigslant}[2]{{\raisebox{.2em}{$#1$}\left/\raisebox{-.2em}{$#2$}\right.}}
\newcommand{\leftexp}[2]{{\vphantom{#2}}^{#1}{#2}}
\newcommand{\leftexpsub}[3]{{\vphantom{#3}}^{#1}_{#2}{#3}}
\providecommand{\abs}[1]{\left\lvert#1\right\rvert}
\newcommand{\op}[1]{\operatorname{#1}}
\newcommand{\oop}{\operatorname{op}}
\newcommand{\cop}{\operatorname{cop}}
\newcommand{\ov}[1]{\overline{#1}}
\newcommand{\un}[1]{\underline{#1}}
\newcommand{\lmod}[1]{#1\text{-}\mathbf{Mod}}
\newcommand{\rmod}[1]{\mathbf{Mod}\text{-}#1}
\newcommand{\YDasy}[3]{\leftexp{#1}{\cal{YD}}^{#2}_{\op{asy}}(#3)}
\newcommand{\ad}{\operatorname{ad}}
\newcommand{\coev}{\operatorname{coev}}
\newcommand{\Drin}{\operatorname{Drin}}
\newcommand{\aDrin}{\operatorname{Drin}^{\operatorname{asy}}}
\newcommand{\Hom}{\operatorname{Hom}}
\newcommand{\ide}{\operatorname{Id}}
\newcommand{\ord}{\operatorname{ord}}
\newcommand{\Wor}{\operatorname{Wor}}
\newcommand{\HYD}{\leftexpsub{H}{H}{\cal{YD}}}
\newcommand{\Vect}{\mathbf{Vect}_k}
\newcommand{\g}{\mathfrak{g}}
\newcommand{\Ug}{U_q(\mathfrak{g})}
\newcommand{\Uq}{U_q(\mathfrak{sl}_2)}
\providecommand{\cal}[1]{\mathcal{#1}}
\providecommand{\fr}[1]{\mathfrak{#1}}
\providecommand{\op}[1]{\operatorname{#1}}
\newcommand{\mC}{\mathbb{C}}
\newcommand{\mZ}{\mathbb{Z}}
\newcommand{\cD}{\mathcal{D}}
\newcommand{\cB}{\mathcal{B}}
\newcommand{\cI}{\mathcal{I}}
\begin{document}

\title{Pointed Hopf Algebras with Triangular Decomposition\thanks{Supported by EPSRC grant EP/I033343/1}
}
\subtitle{A Characterization of Multiparameter Quantum Groups}

\titlerunning{Pointed Hopf Algebras with Triangular Decomposition}        

\author{Robert Laugwitz        
}


\institute{School of Mathematics, University of East Anglia, Norwich Research Park,
Norwich,
UK,
 NR4 7TJ\\
   \email{r.laugwitz@uea.ac.uk}           
}

\date{}

\maketitle

\begin{abstract}
In this paper, we present an approach to the definition of multiparameter quantum groups by studying Hopf algebras with triangular decomposition. Classifying all of these Hopf algebras which are of what we call weakly separable type over a group, we obtain a class of pointed Hopf algebras which can be viewed as natural generalizations of multiparameter deformations of universal enveloping algebras of Lie algebras. These Hopf algebras are instances of a new version of braided Drinfeld doubles, which we call \emph{asymmetric} braided Drinfeld doubles. This is a generalization of an earlier result by Benkart and Witherspoon (2004) who showed that two-parameter quantum groups are Drinfeld doubles. It is possible to recover a Lie algebra from these doubles in the case where the group is free abelian and the parameters are generic. The Lie algebras arising are generated by Lie subalgebras isomorphic to $\mathfrak{sl}_2$.
\keywords{Multiparameter quantum groups \and Pointed Hopf algebras \and Nichols--Woronowicz algebras \and Braided doubles \and Drinfeld doubles}
\subclass{Primary 17B37; Secondary 16W30, 20G42, 18D10}
\end{abstract}


\section{Introduction}\label{section1}

\subsection{What Are Quantum Groups?}\label{motivation}
An important problem in the theory of quantum groups is to give some definition of a class of these objects that captures known series of quantum groups, such as the quantum enveloping algebras $U_q(\fr{g})$ of \cite{Dri}, and their finite-dimensional analogues, as examples. This was for example formulated in \cite[Problem II.10.2]{BG}:

\begin{quotation}
\begin{it}
``Given a finite-dimensional Lie algebra $\g$, find axioms for Hopf al\-ge\-bras to qualify as quantized enveloping algebras of this particular $\g$."
\end{it}
\end{quotation}

A possible hint to the structure of quantum groups is that the quantum envel\-oping algebras $\Ug$ (as well as the small quantum groups $u_q(\fr{g})$ and multiparameter versions) are \emph{pointed Hopf algebras}. Such Hopf algebras were studied by several authors (see e.g. \cite{AS}). Classification results as in \cite{AS2} suggest a strong resemblance of all finite-dimensional pointed Hopf algebras over abelian groups with small quantum groups. Another paper \cite{AS3} gives a characterization of quantum groups at generic pa\-ram\-e\-ters using pointed Hopf algebras of finite Gelfand--Kirillov dimension with infinitesimal braiding of positive generic type.

A further hint to the structure of quantum groups is that they can be decomposed in a triangular way (via the PBW theorem) as
\[
\Ug=U_q(\fr{n}_+)\otimes k\mZ^n\otimes U_q(\fr{n}_-).
\]
Here, the positive and negative part are perfectly paired braided Hopf algebras, and the relation with the group algebra $k\mZ^n$ is governed by semidirect product relations. The positive (and negative) part are so-called \emph{Nichols algebras}.

A third aspect --- observed already in the original paper \cite{Dri} --- is that quantum groups are (quotients of) \emph{quantum} or \emph{Drinfeld doubles}. It was shown in \cite{Maj2} that $U_q(\fr{g})$ in fact is a \emph{braided} Drinfeld double (which is referred to as a \emph{double bosonization} there). It was proved in \cite{BW} that also two-parameter quantum groups are Drinfeld doubles.

In this paper, we aim to provide an axiomatic approach to the definition of (multiparameter) quantum groups by combining the pointed Hopf algebra and the triangular decomposition approach. Under the additional assumption of what we call a triangular decomposition of \emph{weakly separable type} over a group, the only indecomposable examples are close generalizations of multiparameter quantum groups. In particular, assuming further non-degeneracy, they are examples of a more general version of braided Drinfeld doubles, which we refer to as \emph{asymmetric} braided Drinfeld doubles. Further, under certain assumptions on the group and the parameters, we can recover Lie algebras from these Hopf algebras, after introducing a suitable integral form.

\subsection{This Paper's Results}

This paper starts by recalling the necessary technical background, including a brief overview on classification results of finite-dimensional pointed Hopf algebras, as well as structural results by \cite{BB} on algebras with triangular decomposition, in Section~\ref{background}. Next, we give the definition of a bialgebra with a triangular decomposition over a Hopf algebra $H$ in Section~\ref{section1.5}. This adapts the two-step approach used for algebras in \cite{BB} to the study of bialgebras. Namely, we first consider the \emph{free} case of a bialgebra $T(V)\otimes H\otimes T(V^*)$ where the positive and negative parts ($T(V)$, respectively $T(V^*)$) are tensor algebras, and then specify by what ideals (called \emph{triangular} Hopf ideals) we can take the quotient.

The core of this paper is formed by a partial classification of bialgebras with triangular decomposition over a group algebra $kG$. We assume that $V$ has one-dimensional homogeneous components (weak separability). We again proceed in two steps. First, we determine all pointed bialgebras with free positive and negative part over $kG$ in Section \ref{freeclassification}, and then look at pairs of ideals $I$, $I^*$ such that the quotient $A/{( I, I^*)}$ is still a bialgebra in Section \ref{quotientsection}. We find that indecomposable examples are automatically pointed Hopf algebras, and impose strong commutativity conditions on the group $G$. Multiparameter quantum groups fit into this framework. Indeed, the only possible commutator relations (\ref{commrel}) closely resemble those of multiparameter quantum groups:
\begin{align}
[f_i,v_j]&=\gamma_{ij}(k_j-l_i)\in kG, &\forall i=1,\ldots,n.
\end{align}

We further observe that there exists a natural generalization of the definition of a braided Drinfeld double to the setting of braided Hopf algebras in the category of Yetter--Drinfeld modules (YD-modules) over $H$. For this, the base Hopf algebra $H$ does not need to be quasitriangular. We need two braided Hopf algebras which are only required to be dually paired considered as braided Hopf algebra in the category of modules (rather than YD-modules). That is, the requirement that is weakened compared to the definition of a braided Drinfeld double (as in \cite{Maj2} or \cite{Lau}) is that the comodule structures do not need to be dually paired. We refer to this generalization as the \emph{asymmetric braided Drinfeld double}. It gives a natural way of producing Hopf algebras with triangular decomposition --- which are not necessarily quasitriangular. We show in Theorem \ref{drinfeldtheorem} that the Hopf algebras arising in the classification in Theorem~\ref{mainclassificationthm} are of this form (provided that the parameters $\gamma_{ii}$ are non-zero) and that $G$ has to be abelian in this case.

In  Section \ref{liealgebrasection} we show that from these asymmetric braided Drinfeld doubles of separable type we can recover Lie algebras provided that there exists a well-defined morphism of rings to $\mZ$ when setting the parameters equal to 1. Hence, in the spirit of the question asked in Section \ref{motivation}, we can relate the outcome of our classification back to Lie algebras, which are always generated by Lie subalgebras isomorphic to $\mathfrak{sl}_2$.

Here is an overview of the increasingly stronger assumptions on the Hopf algebras $A$ and $H$ used in the classification:
\begin{itemize}
\item Section~\ref{section1.5}: $H$ any Hopf algebra over a field $k$, $A$ a bialgebra with triangular decomposition;
\item Section~\ref{section2}: $H=kG$, $A$ a bialgebra with triangular decomposition;
\begin{itemize}
\item Section~\ref{preliminaryobs}--\ref{freeclassification}: $A$ is of weakly separable type and indecomposable after Definition~\ref{indecprop};
\item Section~\ref{quotientsection}: $A$ is indecomposable and non-degenerate of separable type;
\item Section~\ref{liealgebrasection}: In addition to the assumptions of \ref{quotientsection}, we require that $\operatorname{char} k=0$, and that setting the parameters equal to 1 gives a well-defined homomorphism of rings to $\mZ$.
\end{itemize}
\end{itemize}

The final section~\ref{multiparametersection} contains different classes of indecomposable pointed Hopf algebras with triangular decomposition over a group $kG$ that arise as examples in the main classification. The first class we discuss are the multiparameter quantum groups $U_{\lambda,\underline{p}}(\mathfrak{gl}_n)$ introduced by \cite{FRT} (adapting the presentation in \cite{CM}). They are asymmetric braided Drinfeld doubles, which is a generalization of the result of \cite{BW} showing that two-parameter quantum groups are Drinfeld doubles. In Section \ref{section3} we bring results of \cite{Ros} on growth condition (finite Gelfand--Kirillov dimension) and classification of Nichols algebras from \cite{AS3} into the picture. We  use these results to characterize the Drinfeld--Jimbo type quantum groups at generic parameters $q$ within the classification of this paper under the additional assumption that the triangular decomposition is what we call \emph{symmetric}. Further, a class of finite-dimensional pointed Hopf algebras by Radford can naturally be included as examples in this framework (Section \ref{radford}).

To conclude this paper, we suggest in Section \ref{conclusion} that future research could focus on the search for Hopf algebras with triangular decomposition over other Hopf algebras $H$ (replacing the group algebra $kG$). This might give interesting monoidal categories, or even knot invariants, in other contexts. As the first --- most classical --- example, we take $H$ to be a polynomial ring $k[x_1,\ldots, x_n]$. In this case, the only examples are universal enveloping algebras of Lie algebras.

\subsection{Notations and Conventions} In this paper, adapted Sweedler's notation (see e.g. \cite[1.2]{Swe}) is used to denote coproducts and coactions omitting summation. Unless otherwise stated, we work with Hopf algebras over an arbitrary field $k$. A Hopf algebra always has an invertible antipode $S$. The category of left YD-modules (or \emph{crossed modules}, cf. \cite[Proposition 7.1.6]{Maj1}) over a Hopf algebra $H$ is denoted by $\leftexpsub{H}{H}{\mathcal{YD}}$, while left modules are denoted by $\lmod{H}$, and right modules by $\rmod{H}$.

We denote the module spanned by generators $S$ over a commutative ring $R$ by $R\langle S \rangle$, while $R[S]$ denotes the $R$-algebra generated by elements $S$ (subject to some specified relations). Groups generated by elements of a set $S$ are denoted by $\langle S\rangle$, while ideals are denoted using $(~)$.

\section{Background}\label{background}

\subsection{Pointed Hopf Algebras}

Let the coproduct $\Delta\colon H\to H\otimes H$ make $H$ a coalgebra over a field $k$. We can consider \emph{simple} subcoalgebras $A\leq H$. That is, $\Delta(A)\leq A\otimes A$ and there are no proper subobjects of this type in $A$. A basic observation is that if $\dim A=1$, then $A$ can be written as $kg$, for a generator $g\in H$ such that $\Delta(g)=g\otimes g$. Such elements are called \emph{grouplike}. Indeed, if $H$ is a Hopf algebra, then the set of all grouplike elements $G(H)$ has a group structure. A Hopf algebra is \emph{pointed} if all simple subcoalgebras are one-dimensional. This notion can be traced back to \cite[8.0]{Swe} and classifying all finite-dimensional pointed Hopf algebras can be taken as a first step in the classification of all finite-dimensional Hopf algebras (see e.g. \cite{And} for a recent survey).

In the late 1980s and early 1990s, important classes of pointed Hopf algebras have been discovered with the introduction of the quantum groups (and their small analogues). Due to the vast applications of and attention to these Hopf algebras in the literature, the study of pointed Hopf algebras has become an important algebraic question.

\subsection{Link-Indecomposability}\label{indecomposability}
In the early 1990s, Montgomery asked the question, which groups may occur as $G(H)$ where $H$ is an \emph{indecomposable} pointed Hopf algebra. In \cite{Mon2}, an appropriate notion of indecomposability is discussed in different ways. We will briefly recall the description in terms of \emph{link-indecomposability} which is equivalent to indecomposability as a coalgebra and indecomposability of the Ext-quiver of simple comodules.

Given a pointed Hopf algebra $H$, we define a graph $\Gamma_H$ with vertices being the simple subcoalgebras of $H$ (that is, the grouplike elements). There is an edge $h\to g$ if there exists a $(g,h)$-skew-primitive element  $v\in H$, i.e. $\Delta(v)=v\otimes g+h\otimes v$, which is not contained in $kG(H)$. We say that $H$ is \emph{indecomposable} if $\Gamma_H$ is connected. As an example, group algebras $kG$ are only indecomposable if $G=1$. The quantum group $\Uq$ is indecomposable if the coproducts are e.g. defined as $\Delta(E)=E\otimes 1 + K\otimes E$ and $\Delta(F)=F\otimes 1 + K^{-1}\otimes F$. There are other versions of the coproduct which are not indecomposable (see \cite{Mon2}).

\subsection{Classification Results for Pointed Hopf Algebras}\label{classificationsurvey}
It was understood early that some pointed Hopf algebras can be obtained as bosonizations $A=\cB(V)\rtimes kG$ of so-called \emph{Nichols} (or \emph{Nichols-Woronowicz}) algebras $\cB(V)$ associated to YD-modules over a group $G$ (see e.g. \cite[Section~2]{AS} for definitions). In this case, the coproducts are given by $\Delta(v)=v^{(0)}\otimes v^{(-1)}+ 1 \otimes v$ using Sweedler's notation. That is, if $v$ is a homogeneous element, then $\Delta(v)=v\otimes g+1\otimes v$ for the degree $g\in G(A)$ of $v$ and $A$ is indecomposable over the group generated by $g\in G$ with $V_g\neq 0$. Thus, the question of finding finite-dimensional pointed Hopf algebras is linked to finding finite-dimensional Nichols algebras.\footnote{However, a pointed Hopf algebra is not necessarily a bosonization of this form. Important tools available are the coradical filtration (see e.g. \cite[5.2]{Mon}) and the \emph{lifting method} of Andruskiewitsch and Schneider \cite[Section~5]{AS}.} Although both questions remain open in general, vast progress on classifying pointed Hopf algebras has been made in a series of papers by Andruskiewitsch and Schneider (see \cite{AS,AS2}) for abelian groups $G$, and more recently for symmetric and alternating groups \cite{AFGV}, or groups of Lie type \cite{ACG1,ACG2}. See \cite{And} for more detailed references.

Let us briefly recall the classification results of \cite{AS2} over an algebraically closed field $k$ of characteristic zero here in order to provide the basis for comparison to this paper's classification in Section~\ref{section2} later. To fix notation, let $\cD$ denote a \emph{finite Cartan datum}. That is, a finite abelian group $\Gamma$, a Cartan matrix $A=(a_{ij})$ of dimension $n\times n$ with a choice of generating group elements $g_i$, and characters $\chi_i$ for $i=1,\ldots,n$. Then define $q_{ij}:=\chi_j(g_i)$ and impose the conditions that
\begin{equation}\label{cartandatum}
q_{ij}q_{ji}=q_{ii}^{a_{ij}}, \quad q_{ii}\neq 1.
\end{equation}

We can associate to the Cartan matrix $A$ a root system $\Phi$ (with positive roots $\Phi^+$). The simple roots $\alpha_i$ of $\Phi$ are indexed by $i=1,\ldots ,n$. Denote by $\chi$ the set of connected components of the corresponding Dynkin diagram, and by $\Phi_J$ the root system restricted to the component $J\in \chi$, and write $i\sim j$ if $i$ and $j$ are in the same connected component.
Denote further
\[
g_\alpha:= \prod_{i=1}^n{g_i^{n_i}}, \qquad \chi_\alpha:= \prod_{i=1}^n\chi_{i}^{n_i}, \qquad \text{for a root } \alpha=\sum_{i=1}^n{n_i\alpha_i}.
\]

To state the classification of finite-dimensional pointed Hopf algebras over abelian groups, some technical assumptions need to be made:
\begin{enumerate}
\item[(a)] Assume that the parameters $q_{ii}$ are roots of \emph{odd} order $N_i$.
\item[(b)] $N_i=N_J$ is constant on each connected component, $i\in J$.
\item[(c)] If $J\in \chi$ is of type $G_2$, then 3 does not divide $N_J$.
\end{enumerate}

To construct pointed Hopf algebra from a Cartan datum $\cD$, we need two families of parameters:
\begin{enumerate}
\item[(d)] Let $\lambda=(\lambda_{ij})$ be an $n\times n$-matrix of elements in $k$ such that for all $i \nsim j$, $g_ig_j=1$ or $\chi_i\chi_j\neq \varepsilon$ implies $\lambda_{ij}=0$.
\item[(e)] Further let $\mu=(\mu_\alpha)_{\Phi^+}$ be elements in $k$ such that for any $\alpha\in \Phi^+_J$, for $J\in \chi$, such that if $g_{\alpha}^{N_J}=1$ or $\xi_{\alpha}^{N_J}\neq \varepsilon$, then $\mu_{\alpha}=0$.
\end{enumerate}

\begin{definition}[{\cite[5.4]{AS2}}]\label{asform}
Given a Cartan datum $\cD$ with families of parameters $\lambda, \mu$ as above, there is a Hopf algebra $u=u(\cD,\lambda,\mu)$. The algebra $u$ is generated by elements $g\in \Gamma$ (to define $u_\alpha(\mu)\in k\Gamma$ for $\alpha\in \Phi^+$, see \cite[2.14]{AS2}), and $x_i$ for $i=1,\ldots, n$, subject to the relations
\begin{align}
gx_i&=\chi_i(g)x_ig,\qquad &\text{for all $i$, $g\in \Gamma$},\label{asrel1}\\
\underline{\ad}(x_i)^{1-a_{ij}}&=0, \qquad &\text{for $i\neq j$, $i\sim j$},\label{asrel2}\\
\underline{\ad}(x_i)(x_j)&=\lambda_{ij}(1-g_ig_j), \qquad &\text{for all $i<j$, $i\nsim j$},\label{asrel3}\\
x_\alpha^{N_J}&=u_\alpha(\mu), \qquad &\text{for all $\alpha\in \Phi_J^+$, $J\in \chi$}.\label{asrel4}
\end{align}
Here, $\underline{\ad}(x)(y)$ is the \emph{braided} commutator $xy-m\circ \Psi(x\otimes y)$ where $m$ denotes multiplication and $\Psi$ is the YD-braiding.
The comultiplication is given by $\Delta(x_i)=x_i\otimes 1 + g_i\otimes x_i.$
\end{definition}

\begin{theorem}[{\cite[0.1]{AS}}]
Under the above assumptions (a)--(e) on a Cartan datum $\cD$ with parameters $\lambda$, $\mu$, the Hopf algebra $u(\cD,\lambda, \mu)$ is indecomposable and pointed with $G(u)=\Gamma$ and has finite dimension. 

Moreover, if $\abs{G}$ is not divisible by $2,3,5$ or $7$, then any indecomposable finite-dimensional pointed Hopf algebra over $kG$, where $G$ is abelian, and $k=\overline{k}$, $\operatorname{char} k=0$, is of this form.
\end{theorem}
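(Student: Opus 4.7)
The theorem has two parts: construction and classification. For the construction, I would first exhibit a PBW-type basis for $u(\cD,\lambda,\mu)$: introduce Lusztig-style root vectors $x_\alpha$ for each $\alpha \in \Phi^+$ via iterated braided commutators, and show by a filtered-algebra / diamond-lemma argument that the ordered monomials $g \cdot x_{\beta_1}^{m_1} \cdots x_{\beta_N}^{m_N}$, with $g \in \Gamma$ and $0 \le m_i < N_{J(\beta_i)}$, span a basis. This yields $\dim u = |\Gamma| \prod_{J \in \chi} N_J^{|\Phi_J^+|} < \infty$. The coproducts $\Delta(x_i) = x_i \otimes 1 + g_i \otimes x_i$ make each $x_i$ a $(g_i,1)$-skew-primitive, so the link-quiver $\Gamma_u$ connects the identity to every $g_i$; since $\Gamma = \langle g_1, \dots, g_n\rangle$ by hypothesis, $u$ is link-indecomposable, and pointedness with $G(u) = \Gamma$ follows from the PBW basis by inspecting the coradical filtration.

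For the classification direction, the strategy is the lifting method of Andruskiewitsch and Schneider. Given any indecomposable finite-dimensional pointed Hopf algebra $H$ with $G := G(H)$ abelian, pass to the associated graded Hopf algebra $\gr H$ for the coradical filtration. Standard arguments identify $\gr H \cong R \# kG$ as a bosonization of a graded connected braided Hopf algebra $R$ in $\HYD$. Since $G$ is abelian, the degree-one piece $V := R(1)$ is a YD-module of diagonal type, decomposing as $V = \bigoplus_i k x_i$ with $x_i$ of group-degree $g_i$ and character $\chi_i$, so that the infinitesimal braiding matrix is $q_{ij} = \chi_j(g_i)$. The next step is to show that $R$ is generated in degree one, so $R = \cB(V)$ is a Nichols algebra, and that finite-dimensionality forces $V$ to be of finite Cartan type, that is, the $q_{ij}$ satisfy (\ref{cartandatum}) for some Cartan matrix $A$. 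The prime hypothesis on $|G|$ enters here: it eliminates Heckenberger's exceptional finite-dimensional Nichols algebras of diagonal type, which only arise when small primes divide the $N_i$, leaving only the Cartan-type examples and forcing conditions (a)--(c).

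It remains to lift the Nichols-algebra presentation of $\gr H = \cB(V) \# kG$ back to $H$ itself. The $\Gamma$-action relations (\ref{asrel1}) and the linked quantum Serre relations (\ref{asrel2}) lift unchanged; the unlinked relations $\underline{\ad}(x_i)(x_j) = 0$ acquire right-hand sides $\lambda_{ij}(1 - g_i g_j)$ as in (\ref{asrel3}); and the root-vector relations $x_\alpha^{N_J} = 0$ lift to $u_\alpha(\mu) \in k\Gamma$ as in (\ref{asrel4}). The admissibility conditions (d) and (e) on $(\lambda, \mu)$ are precisely the cocycle/compatibility conditions ensuring that these deformations are $\Gamma$-grading compatible and assemble into a Hopf algebra, and their comparison with the PBW dimension count from the first half shows $H \cong u(\cD, \lambda, \mu)$. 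The principal obstacle throughout is the Nichols-algebraic step, which rests on Heckenberger's full classification of arithmetic root systems of diagonal type; the subsequent lifting is a second-order cohomological computation that is technically involved but routine once the shape of $\gr H$ has been pinned down.
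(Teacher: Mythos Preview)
The paper does not give its own proof of this theorem: it appears in Section~\ref{classificationsurvey} as a background result quoted from \cite[0.1]{AS} (with the construction in Definition~\ref{asform} taken from \cite[5.4]{AS2}), and no argument is supplied beyond the citation. There is therefore nothing in the paper to compare your proposal against.

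That said, your outline is a fair summary of the Andruskiewitsch--Schneider strategy. A couple of remarks: your appeal to Heckenberger's classification of arithmetic root systems is somewhat anachronistic for the original proof, which instead relies on an explicit analysis of when $\cB(V)$ is finite-dimensional for $V$ of diagonal type over an abelian group (the prime restrictions on $|G|$ arise there directly, not via the later exceptional list). Also, the step ``$R$ is generated in degree one'' is not automatic and is in fact one of the hardest points in \cite{AS2}; you gloss it as a preliminary to identifying $R=\cB(V)$, but in the original argument it is intertwined with the Cartan-type conclusion and uses the prime hypothesis in an essential way. Finally, the lifting of the root-vector relations $x_\alpha^{N_J}=u_\alpha(\mu)$ is considerably more delicate than ``routine'': determining the possible $u_\alpha(\mu)\in k\Gamma$ and showing they assemble consistently is the technical core of \cite{AS2}.
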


\subsection{Algebras with Triangular Decomposition (Free Case)}

A triangular decomposition of algebras means that an intrinsic PBW decomposition exists, similar to universal enveloping algebras of Lie algebras. This is a common feature of quantum groups and rational Cherednik algebras, but more generally shared by all braided Drinfeld or Heisenberg doubles (cf. \cite[Section~3]{Lau}). Here, we are using the definitions introduced in \cite{BB} to study such algebras with triangular decomposition (so-called \emph{braided doubles}).

From a deformation-theoretic point of view, triangular decomposition can be viewed as follows. Let $V$, $V^*$ be dually paired finite-dimensional vector spaces and $H$ a Hopf algebra over a field $k$, such that $V$ is a left $H$-module, and $V^*$ carries the dual right $H$-action. That is, for the evaluation map $\langle ~,~\rangle\colon V^*\otimes V\to k$, we have
\begin{equation}
\langle f\triangleleft h,v\rangle=\langle f, h\triangleright v\rangle, \qquad \forall f\in V^*, v\in V, h\in H.
\end{equation}
Now define $A_0(V,V^*)$ to be the algebra on $T(V)\otimes H \otimes T(V^*)$ with relations 
\begin{equation}\label{boson}
fh=h_{(1)}(f\triangleleft h_{(2)}), \qquad hv=(h_{(1)}\triangleright v) h_{(2)},
\end{equation}
(i.e. the bosonizations $T(V)\rtimes H$ and $H\ltimes T(V^*)$ are subalgebras), and $[f,v]=0$.

In \cite[3.1]{BB}, a family of deformations of $A_0(V,V^*)$ over $\Hom_k(V^*\otimes V,H)$ is defined. The algebra $A_\beta(V,V^*)$, for a parameter $\beta\colon V^*\otimes V\to H$, is defined using the same generators in $V$, $V^*$ and $H$ with the same bosonization relations, but the commutator relations
\begin{equation}
[f,v]=\beta(f,v).
\end{equation}
In order to obtain flat deformations we restrict to maps $\beta$ such that multiplication
\begin{align*}
m\colon T(V)\otimes H\otimes T(V^*)&\stackrel{\sim}{\longrightarrow} A_{\beta}(V,V^*),&v\otimes h\otimes f&\mapsto vhf,
\end{align*}
gives an isomorphism of $k$-vector spaces.

\begin{definition}
In the case where $m$ gives such an isomorphism of $k$-vector spaces, we say that $A_\beta(V,V^*)$ is a \emph{free braided double}.
\end{definition}

\begin{theorem}[{\cite[Theorem 3.3]{BB}}]\label{bbthm}
The algebra $A_\beta(V,V^*)$ is a free braided double if and only if there exists a $k$-linear map $\delta\colon V\to H\otimes V$, $\delta(v)=v^{[-1]}\otimes v^{[0]}$ which is YD-compatible with the $H$-action on V, i.e. for any $h\in H$
\begin{equation}\label{ydcond}
h_{(1)}v^{[-1]}\otimes (h_{(2)}\triangleright v^{[0]})=(h_{(1)}\triangleright v)^{[-1]}h_{(2)}\otimes (h_{(1)}\triangleright v)^{[0]}.
\end{equation}
In this case, we call $(V,\delta)$ a \emph{quasi-YD-module} and we have
\begin{equation}\label{commrel}
[f,v]=\beta(f\otimes v)=v^{[-1]}\langle f,v^{[0]}\rangle.
\end{equation}
\end{theorem}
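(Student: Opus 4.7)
The plan is to prove both implications by exploiting associativity in $A_\beta(V,V^*)$: a product of the form $fhv$ can be brought into PBW normal form $V\cdot H\cdot V^*$ in two natural ways (either commute $f$ past $h$ first, or commute $h$ past $v$ first), and the consistency of these two reductions encodes exactly the YD condition on $\delta$.

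For the forward direction, assume $m$ is an isomorphism. Rewrite $(fh)v$ and $f(hv)$ using the bosonization relations (\ref{boson}) together with the commutator rule $[f,v]=\beta(f,v)$; each expression splits into a ``leading'' term lying in $V\cdot H\cdot V^*$ and a ``correction'' term lying in $H$. The leading terms coincide by coassociativity, so freeness of the double forces the corrections to agree, giving
\[
h_{(1)}\,\beta(f\triangleleft h_{(2)},v)\;=\;\beta(f,\,h_{(1)}\triangleright v)\,h_{(2)} \qquad \text{in } H, \;\forall f\in V^*.
\]
Since $V$ is finite-dimensional, $\beta$ corresponds via the dual pairing to a unique linear map $\delta\colon V\to H\otimes V$, $\delta(v)=v^{[-1]}\otimes v^{[0]}$, via the formula $\beta(f,v)=v^{[-1]}\langle f,v^{[0]}\rangle$. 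Letting $f$ range over $V^*$ in the displayed identity and reading the resulting identity in $H\otimes V$ gives precisely (\ref{ydcond}), while (\ref{commrel}) holds by the very definition of $\delta$.

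For the reverse direction, suppose $\delta$ is YD-compatible and define $\beta$ by (\ref{commrel}). To show $A_\beta$ is free, construct a faithful representation on $M := T(V)\otimes H$ as follows: elements of $T(V)\rtimes H$ act via the standard bosonization action on the left, and each $f\in V^*$ acts as a Dunkl-type twisted derivation, contracting successively against the tensor factors $v_i\in V$ using $\delta$ and transporting the produced $H$-element $v_i^{[-1]}$ through the remaining factors by the bosonization relation. Since $m$ is automatically surjective onto $A_\beta$, what remains is linear independence of PBW monomials --- equivalently, the faithfulness of the $A_\beta$-action when tested against the cyclic vector $1\otimes 1\in M$.

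The main obstacle is verifying in the reverse direction that the proposed $V^*$-action respects $fh=h_{(1)}(f\triangleleft h_{(2)})$: evaluating both sides on a monomial in $M$ and bringing the result into normal form requires commuting the $H$-factor $v^{[-1]}$ produced by $\delta$ past either $h$ or its first Sweedler component. The two resulting expressions agree precisely when (\ref{ydcond}) holds --- this is the very same bookkeeping as in the forward direction, read in reverse. All other compatibility checks (associativity inside $T(V)\rtimes H$, triviality of $[f_1,f_2]$, and so forth) are routine, and once this key step is carried out, the faithful representation witnesses the PBW property and delivers freeness.
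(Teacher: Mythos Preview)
The paper does not give its own proof of this theorem; it is quoted verbatim as \cite[Theorem~3.3]{BB} and used as background input. So there is no in-paper argument to compare against, and your proposal must stand on its own.

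Your forward direction is correct and is essentially the standard overlap computation: reducing $fhv$ to normal form in two ways and equating the $H$-components gives
\[
h_{(1)}\beta(f\triangleleft h_{(2)},v)=\beta(f,h_{(1)}\triangleright v)h_{(2)},
\]
and passing through the isomorphism $\Hom_k(V^*\otimes V,H)\cong\Hom_k(V,H\otimes V)$ yields (\ref{ydcond}). No objection here.

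The reverse direction has a genuine gap. You propose to realise $A_\beta$ on $M=T(V)\otimes H$ with $V^*$ acting by twisted derivations, and then claim that linear independence of PBW monomials is ``equivalently, the faithfulness of the $A_\beta$-action when tested against the cyclic vector $1\otimes 1\in M$''. This equivalence is false as stated: any $f\in V^*$ annihilates $1\otimes 1$ (there is nothing to contract against), so every PBW monomial with a nontrivial $T(V^*)$-part acts by zero on $1\otimes 1$. Evaluating at the cyclic vector therefore only sees the quotient $A_\beta\twoheadrightarrow T(V)\rtimes H$ and cannot separate monomials that differ in their $V^*$-component. To repair this you must either (i) prove genuine faithfulness of $A_\beta\to\End_k(M)$ by testing against a supply of vectors $w_1\cdots w_k\otimes 1$ with $k$ large enough to detect the $V^*$-degree, which requires a careful inductive argument on the length of $f^{b}$, or (ii) enlarge the module to $M'=T(V)\otimes H\otimes T(V^*)$ with the action given directly by the straightening rules, so that $a\cdot(1\otimes 1\otimes 1)$ is literally the PBW normal form of $a$, or (iii) invoke Bergman's diamond lemma and observe that $f\cdot h\cdot v$ is the unique overlap ambiguity and it resolves precisely when (\ref{ydcond}) holds. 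Any of these fixes is routine once identified, but the sentence as written does not establish freeness.
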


Note that $A_{\beta}(V,V^*)$ is a graded algebra where $\deg v=1$, $\deg h=0$, and $\deg f=-1$, for all $v\in V$, $h\in H$, and $f\in V^*$.

\subsection{Triangular Ideals}\label{triangularideals} So far, the braided Hopf algebras $T(V)$ and $T(V^*)$ were assumed to be free. We can bring additional relations into the picture, defining \emph{braided doubles} that are not necessarily free. Let $I\triangleleft T(V)$ and $I^*\triangleleft T(V^*)$ be ideals. We want to determine when the quotient map
\[
m\colon T(V)/I\otimes H \otimes T(V^*)/{I^*}\stackrel{\sim}{\longrightarrow} A_\beta(V, V^*)/{( I, I^*)}
\]
is still a graded isomorphism of $k$-vector spaces. In \cite[Appendix~A]{BB} it is show that this is the case if and only if $J:=( I, I^*)$ is a so-called \emph{triangular ideal}.
That is, $J=I\otimes H \otimes T(V^*)+T(V)\otimes H \otimes I^*$, where $I\triangleleft T^{>1}(V)$, $I^*\triangleleft T^{>1}(V^*)$ are homogeneously generated ideals such that $I$ and $I^*$ are $H$-invariant and
\begin{equation}\label{idealcond}
T(V^*)I\leq J, \qquad I^*T(V)\leq J.
\end{equation} 
This condition is equivalent to the commutators $[f,I]$ and $[I^*,v]$ being contained in $J$ for all elements $v\in V$, $f\in V^*$.
For each quasi-YD-module, there exists a unique largest triangular ideal $I_{\op{max}}$, and thus a unique maximal quotient referred to as the \emph{minimal braided double} of $V$.

If $\delta$ is a YD-module, then the maximal quotient $T(V)/{I_{\op{max}}}$ is the Nichols algebra $\cB(V)$ of $V$, and the braided double on $\cB(V)\otimes H \otimes\cB(V^*)$ is a generalization of the Heisenberg double, a so-called \emph{braided Heisenberg double}.

For the purpose of this paper, we need ideals $I$ such that $T(V)/I$ is a braided bialgebra, where $V$ is a YD-module. That is, not a bialgebra object in the category of $k$-vector spaces but in the category of YD-modules over $kG$ (see e.g. \cite[1.2--1.3]{AS}). In fact, if $I$ is a homogeneously generated ideal in $T^{>1}(V)$ which is a coideal and a YD-submodule, then $T(V)/I$ is a braided Hopf algebra. We denote the collection of such ideals by $\cI_V$. In particular $I_{\op{max}}\in \cI_V$ as the Nichols algebra $\cB(V)$ is a braided Hopf algebra.


\section{Hopf Algebras with Triangular Decomposition}\label{section1.5}

In this section, we let $k$ be a field of arbitrary characteristic and $H$ a Hopf algebra over $k$. We introduce a notion of a Hopf algebra with triangular decomposition over $H$.

\subsection{Definitions}\label{definitions}

We refer to the  grading of a braided double $T(V)/I\otimes H\otimes T(V^*)/{I^*}$ given by
\[
\deg v=1,\quad \deg f=-1, \quad \deg h=0, \qquad \forall v\in V, ~f\in V^*, ~h\in H,
\]
as the \emph{natural grading}. We want to study Hopf algebras with triangular decomposition preserving this grading.

\begin{definition}\label{triangulardecompdefn}
A bialgebra (or Hopf algebra) $A$ with \emph{triangular decomposition} over a Hopf algebra $H$ is a braided double $A=T(V)/I\otimes H\otimes T(V^*)/{I^*}$ which is a bialgebra (respectively Hopf algebra) such that

\begin{align}
\bullet~&\text{$H$ is a subcoalgebra of $A$ with respect to the original coproduct of $H$},\label{assum0}\\
\begin{split}\bullet~&\text{the subspaces $T(V)\otimes H$ and $H\otimes T(V^*)$ are closed under the}\\&\text{coproduct of $A$,}
\end{split}\label{assum1}\\
\begin{split}\bullet~&\text{the coproduct and counit are morphisms of graded algebras}\\ &\text{for the natural grading.}
\end{split}\label{assum2}
\end{align}
(In the Hopf case, the antipode $S$ is required to preserve the natural grading and the subspaces $T(V)\otimes H$ and $H\otimes T(V^*)$.)
\end{definition}
Note that (\ref{assum2}) implies that $\varepsilon(v)=\varepsilon(f)=0$ for all $v\in V$, $f\in V^*$. We further observe that assumptions (\ref{assum1}) and (\ref{assum2}) combined with the counit property, give that $\Delta(V)\leq H\otimes V+V\otimes H$ as well as $\Delta(V^*)\leq H\otimes V^*+V^*\otimes H$. Consider the compositions $\delta_r, \delta_l$ with projections in
\[
\xymatrix{
&\ar[dl]_{\delta_l}\ar[d]^{\Delta} V \ar[rd]^{\delta_r}&\\
H\otimes V&\ar@{->>}[l]^-{p_1}H\otimes V\oplus V\otimes H\ar@{->>}[r]_-{p_2}&V\otimes H
.}
\]
The coalgebra axioms imply that $\delta_l$ and $\delta_r$ are left (respectively right) $H$-coactions. In particular, as the semidirect product relations in $A$ are preserved by $\Delta$, $\delta_l$ (and $\delta_r$) are left (respectively right) YD-compatible with the given actions of $H$ on $V$ (right action via antipode).
Similarly, we can obtain a left and right YD-module structure over $H$ on the dual $V^*$ from the coproduct. The corresponding coactions are denoted by $\delta_l^*$ and $\delta_r^*$.

\begin{definition}
Given a bialgebra $A$ with triangular decomposition over $H$, we define the \emph{right (respectively, left) YD-structure} of $A$ to be $\delta_r$ (respectively, $\delta_l$) together with the given $H$-actions. We refer to $\delta_r^*$ and $\delta_l^*$ (with the dual $H$-actions) as the right and left \emph{dual} YD-structures.
\end{definition}

To fix Sweedler's notation for the different coactions, denote $\delta_r(v)=v^{(0)}\otimes v^{(-1)}$ and $\delta_l(v)=v^{\overline{(-1)}}\otimes v^{\overline{(0)}}$ and use similar notations for $f\in V^*$. We will reformulate the definition of a bialgebra with triangular decomposition in terms of conditions on the YD-structures of $A$ in (\ref{eqn1})--(\ref{eqn5}) in the free case first.

\begin{lemma}\label{hopflemma}
A bialgebra $A$ with triangular decomposition over $H$ is a Hopf algebra with triangular decomposition if and only if there exists a $k$-linear map $S\colon V\oplus V^*\to V\otimes H\oplus V^*\otimes H$ such that
\begin{align}
\begin{split}
S(v^{(0)})v^{(-1)}+(S(v^{\overline{(-1)}})_{(1)}\triangleright v^{\overline{(0)}})S(v^{\overline{(-1)}})_{(2)}&=0, \\
v^{(0)}S(v^{(-1)})+({v^{\overline{(-1)}}}_{(1)}\triangleright S(v^{\overline{(0)}})){v^{\overline{(-1)}}}_{(2)}&=0,
\end{split}&\forall v\in V,\label{antipodecond1}\\
\begin{split}
f^{\overline{(-1)}}S(f^{\overline{(0)}})+S(f^{(-1)})_{(1)}(f^{(0)}\triangleleft S(f^{(-1)})_{(2)})&=0,\\
S(f^{\overline{(-1)}})f^{\overline{(0)}}+{f^{(-1)}}_{(1)}(S(f^{(0)})\triangleleft {f^{(-1)}}_{(2)})&=0,
\end{split}&\forall f\in V^*.\label{antipodecond2}
\end{align}
In this case, $S$ extends uniquely to an antipode on all of $A$.
\end{lemma}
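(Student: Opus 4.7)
The plan is to prove the two directions separately. For necessity, suppose $A$ is a Hopf algebra with triangular decomposition. Because $S$ preserves the natural grading and the subspaces $T(V)\otimes H$ and $H\otimes T(V^*)$, one concludes $S(V)\subseteq V\otimes H$ (the unique degree-$1$ component of $T(V)\otimes H$) and $S(V^*)\subseteq H\otimes V^*=V^*\otimes H$, the latter equality holding as subspaces of $A$ via the bosonization relation~\eqref{boson}. Now apply the antipode axiom $m(S\otimes \operatorname{id})\Delta(v)=0$ to $v\in V$ with $\Delta(v)=v^{(0)}\otimes v^{(-1)}+v^{\overline{(-1)}}\otimes v^{\overline{(0)}}$: the summand $S_H(v^{\overline{(-1)}})v^{\overline{(0)}}\in H\cdot V$ rewrites via~\eqref{boson} as $(S(v^{\overline{(-1)}})_{(1)}\triangleright v^{\overline{(0)}})S(v^{\overline{(-1)}})_{(2)}$, producing the first identity of~\eqref{antipodecond1}. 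The three remaining identities in~\eqref{antipodecond1}--\eqref{antipodecond2} come from the symmetric computation $m(\operatorname{id}\otimes S)\Delta(v)=0$ and the analogues for $f\in V^*$.

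For sufficiency, I would extend the given $S$ to a map $\tilde S\colon A\to A$ by declaring $\tilde S|_H=S_H$ and imposing anti-multiplicativity, $\tilde S(ab)=\tilde S(b)\tilde S(a)$. The antipode axiom then propagates from the generating set $H\cup V\cup V^*$ to all of $A$ by the standard calculation
\begin{align*}
m(\tilde S\otimes\operatorname{id})\Delta(ab) &= \tilde S(b_{(1)})\tilde S(a_{(1)})a_{(2)}b_{(2)} \\
&= \tilde S(b_{(1)})\,\epsilon(a)\,b_{(2)} = \epsilon(a)\epsilon(b) = \epsilon(ab),
\end{align*}
and symmetrically for $m(\operatorname{id}\otimes\tilde S)\Delta$, where the base case on $H$, $V$, and $V^*$ is supplied by $S_H$ being the antipode of $H$ together with the hypotheses~\eqref{antipodecond1}--\eqref{antipodecond2}. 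Uniqueness of the extension is then immediate, since antipodes of bialgebras are unique.

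The hard part will be the well-definedness of $\tilde S$: the anti-multiplicative extension must respect each defining relation of $A$. Relations of $H$ are handled automatically by $S_H$. Compatibility with the bosonization relations~\eqref{boson} amounts to equivariance identities such as $S_H(h)S(f)=S(f\triangleleft h_{(2)})S_H(h_{(1)})$, which I would derive from the YD-compatibility of the coactions $\delta_l^*,\delta_r^*$ with the right $H$-action $\triangleleft$ -- a property forced by $\Delta$ being an algebra morphism on $A$ and noted already after Definition~\ref{triangulardecompdefn}. Compatibility with the commutator $[f,v]=\beta(f,v)$ and with the triangular ideals $I, I^*$ then follows because both are coideal-type data compatible with $\Delta$, so the resulting $\tilde S$ coincides with the convolution inverse of $\operatorname{id}_A$ on these subspaces.
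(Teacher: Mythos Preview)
Your argument follows the same route as the paper: the identities \eqref{antipodecond1}--\eqref{antipodecond2} are nothing more than the antipode axioms $m(S\otimes\ide)\Delta=\varepsilon\cdot 1=m(\ide\otimes S)\Delta$ evaluated on the generators $v\in V$ and $f\in V^*$, rewritten via the semidirect product relations~\eqref{boson} and using $\varepsilon(v)=\varepsilon(f)=0$. The paper's own proof is in fact much terser than yours and does not spell out the sufficiency direction at all; your extra care there is warranted.

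One point to sharpen: in your final paragraph the justification for well-definedness of $\tilde S$ on the cross relation and on the triangular ideals is slightly circular. You appeal to $\tilde S$ coinciding with the convolution inverse of $\ide_A$ on those subspaces, but the convolution inverse is only known to exist once $\tilde S$ is already shown to be a well-defined antipode. A cleaner way to organise the argument is to note first that the sub-bialgebras $T(V)/I\rtimes H$ and $H\ltimes T(V^*)/{I^*}$ are already Hopf algebras (bosonizations of braided Hopf algebras over the Hopf algebra $H$), so the anti-multiplicative extension is automatically well-defined on each of them separately. The only remaining relation to check is the cross relation $[f,v]=\beta(f,v)$; its compatibility with $\tilde S$ can then be derived from the four identities \eqref{antipodecond1}--\eqref{antipodecond2} together with the YD-compatibility of the coactions, rather than from an abstract convolution-inverse argument. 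Once well-definedness is established on the free double, passage to the quotient by a triangular Hopf ideal is immediate since such ideals are Hopf ideals by construction.
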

\begin{proof}
This follows (under use of the semidirect product relations) by restating the antipode axioms for the coproduct of a Hopf algebra with triangular decomposition, in which the coproducts have the form $\Delta(v)=v^{(0)}\otimes v^{(-1)}+v^{\overline{(-1)}}\otimes v^{\overline{(0)}}$. Note that $\varepsilon(v)=0$ as we require the counit to be a morphism of graded algebras.
\end{proof}

\subsection{The Free Case}\label{freesection}

Let $A$ be a \emph{free} braided double, i.e. $A=T(V)\otimes H\otimes T(V^*)$. We can now state necessary and sufficient conditions on the YD-structures of $A$ to make the algebra $A$ a bialgebra with triangular decomposition. In the following, we stick to the notation of \cite[Definition~2.1]{BB} denoting the quasi-coaction determining the commutator relations between elements of $V$ and $V^*$ by $\delta(v)=v^{[-1]}\otimes v^{[0]}$, for $v\in V$.

\begin{lemma}
A free braided double $A$ on $T(V)\otimes H\otimes T(V^*)$ is a bialgebra with triangular decomposition if and only if there exist YD-structures $\delta_l$, $\delta_r$, $\delta_l^*$, and $\delta_r^*$ such that the following conditions hold for $v\in V$, $f\in V^*$:
\begin{align}
(f^{(0)}\triangleleft {v^{\overline{(-1)}}})\otimes ({f^{(-1)}}\triangleright v^{\overline{(0)}})&=f\otimes v,\label{eqn1}\\
({f^{\overline{(-1)}}}\triangleright v^{(0)})\otimes (f^{\overline{(0)}}\triangleleft {v^{(-1)}})&=v\otimes f,\label{eqn2}\\
v^{(0)}f^{(0)}\otimes (f^{(-1)}v^{(-1)}-v^{(-1)}f^{(-1)})&=0,\label{eqn3}\\
(f^{\overline{(-1)}}v^{\overline{(-1)}}-v^{\overline{(-1)}}f^{\overline{(-1)}})\otimes v^{\overline{(0)}}f^{\overline{(0)}}&=0,\label{eqn4}\\
\begin{aligned}&v^{(0)[-1]}\langle f^{(0)}, v^{(0)[0]}\rangle\otimes f^{(-1)}v^{(-1)}\\&+ f^{\overline{(-1)}}v^{\overline{(-1)}}\otimes v^{\overline{(0)}[-1]}\langle f^{\overline{(0)}}, v^{\overline{(0)}[0]}\rangle\end{aligned}&= v^{[-1]}\otimes v^{[-1]}\langle f, v^{[0]} \rangle. \label{eqn5}
\end{align}
\end{lemma}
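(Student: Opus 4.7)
The plan is to verify both directions simultaneously by expanding $\Delta$ applied to the defining relations of the free braided double and matching graded components in the PBW decomposition of $A\otimes A$.

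For necessity, suppose $A$ is a bialgebra with triangular decomposition. The YD-structures $\delta_l$, $\delta_r$, $\delta_l^*$, $\delta_r^*$ arise as in the paragraph preceding the lemma: assumptions (\ref{assum1}) and (\ref{assum2}) together with the counit property force $\Delta(V)\subseteq H\otimes V+V\otimes H$ and $\Delta(V^*)\subseteq H\otimes V^*+V^*\otimes H$, from which the coactions are obtained by projection. The content of equations (\ref{eqn1})--(\ref{eqn5}) is then extracted by applying $\Delta$ to the one remaining defining relation of the free braided double,
\[
fv-vf=v^{[-1]}\langle f,v^{[0]}\rangle,
\]
and exploiting the direct-sum decomposition of $A\otimes A$ coming from the PBW isomorphism $A\cong T(V)\otimes H\otimes T(V^*)$. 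Expanding $\Delta(f)\Delta(v)-\Delta(v)\Delta(f)$ gives eight terms which must be rewritten in PBW order using the bosonization relations (\ref{boson}); sorting them by bidegree in $V$ and $V^*$ yields five nonzero graded components of total natural degree zero, and each must match the corresponding component of $\Delta([f,v])\in H\otimes H$. The bidegrees $(VHV^*,H)$, $(H,VHV^*)$, and $(H,H)$ give (\ref{eqn3}), (\ref{eqn4}), and (\ref{eqn5}) directly, while the mixed components $(HV^*,VH)$ and $(VH,HV^*)$ yield (\ref{eqn1}) and (\ref{eqn2}) after applying the counit to the outer $H$-tensor factors and invoking the counit property of the coactions.

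For sufficiency, I define $\Delta$ on generators by $\Delta(h)=h_{(1)}\otimes h_{(2)}$ for $h\in H$, $\Delta(v)=\delta_r(v)+\delta_l(v)$ for $v\in V$, and $\Delta(f)=\delta_r^*(f)+\delta_l^*(f)$ for $f\in V^*$, and extend multiplicatively to the free algebra on $H\oplus V\oplus V^*$. I would then verify that $\Delta$ annihilates each defining relation of $A$. The bosonization relations are killed by the YD-compatibility (\ref{ydcond}) and its analogues for $\delta_l^*$, $\delta_r^*$, which is the standard fact that the one-sided bosonizations $T(V)\rtimes H$ and $H\ltimes T(V^*)$ are bialgebras. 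The commutator relations are killed precisely by equations (\ref{eqn1})--(\ref{eqn5}), reversing the bookkeeping in the necessity direction. Coassociativity reduces to coassociativity of the coproduct on $H$ and of the four coactions, together with the bicomodule compatibility implicit in having both $\delta_l$ and $\delta_r$ arise from a single coproduct; the counit axiom uses $\varepsilon(v)=\varepsilon(f)=0$ from (\ref{assum2}).

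The main obstacle is the identification of the mixed-bidegree component conditions with equations (\ref{eqn1}) and (\ref{eqn2}). In PBW coordinates, the $(HV^*,VH)$-component of $\Delta(fv-vf)=0$ is an identity in $H\otimes V^*\otimes V\otimes H$ which at first glance looks strictly stronger than (\ref{eqn1}); applying $\varepsilon\otimes\op{id}\otimes\op{id}\otimes\varepsilon$ collapses it to (\ref{eqn1}), but for the sufficiency direction one must promote (\ref{eqn1}) back to the full four-tensor identity. This step is exactly where the YD-compatibility (\ref{ydcond}) for $\delta_l$ and the analogous right-handed condition for $\delta_r^*$ must be used to align the two sides. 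Once this bookkeeping is handled for one mixed component, the other follows by the evident symmetry swapping $V\leftrightarrow V^*$ and left/right coactions.
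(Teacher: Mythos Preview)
Your proposal is correct and follows essentially the same approach as the paper: apply $\Delta$ to the commutator relation, sort the resulting terms by bidegree via the PBW isomorphism to obtain (\ref{eqn3})--(\ref{eqn5}) directly and four-tensor identities in the mixed components, then reduce those to (\ref{eqn1}) and (\ref{eqn2}) via the counit while promoting them back using the coaction/YD axioms; the bosonization relations are handled by YD-compatibility in both your argument and the paper's. Your discussion of the ``promotion'' step is in fact more explicit than the paper's, which simply asserts the equivalence of the four-tensor identities with (\ref{eqn1}) and (\ref{eqn2}) ``under use of the counit of $H$, applying the coaction axioms.''
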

\begin{proof}
The conditions are easily checked --- under use of the relations in $A$ and the PBW theorem --- to be equivalent to the requirement that (\ref{commrel}) is preserved by $\Delta$. This gives the relations (\ref{eqn3})--(\ref{eqn5}), as well as 
\begin{align*}
{v^{\overline{(-1)}}}_{(1)}(f^{(0)}\triangleleft {v^{\overline{(-1)}}}_{(2)})\otimes ({f^{(-1)}}_{(1)}\triangleright v^{\overline{(0)}}){f^{(-1)}}_{(2)}&=v^{\overline{(-1)}}f^{(0)}\otimes v^{\overline{(0)}}f^{(-1)},\\
({f^{\overline{(-1)}}}_{(1)}\triangleright v^{(0)}){f^{\overline{(-1)}}}_{(2)}\otimes {v^{(-1)}}_{(1)}(f^{\overline{(0)}}\triangleleft {v^{(-1)}}_{(2)})&=v^{(0)}f^{\overline{(-1)}}\otimes v^{(-1)}f^{\overline{(0)}}.
\end{align*}
These relations are equivalent to (\ref{eqn1}) and (\ref{eqn2}) under use of the counit of $H$, applying the coaction axioms. 

Further, given $\delta_r$, $\delta_l$ as well as their dual counterparts $\delta_r^*$, $\delta_l^*$, the bosonization relations are preserved by the coproduct defined as 
\begin{align}
\Delta(v)&=v^{(0)}\otimes v^{(-1)}+v^{\overline{(-1)}}\otimes v^{\overline{(0)}}, &\Delta(f)&=f^{(0)}\otimes f^{(-1)}+f^{\overline{(-1)}}\otimes f^{\overline{(0)}},
\end{align}
for $v\in V$, $f\in V^*$ by YD-compatibility.
\end{proof}

It will become apparent in Section~\ref{section2} what constraints on the structure of $A$ conditions (\ref{eqn1})--(\ref{eqn5}) give when working over $H$ a group algebra, and over a polynomial ring in Section~\ref{conclusion}.

\subsection{Triangular Hopf Ideals}\label{triangularhopfsect}

We are looking for triangular ideals $J=I\otimes H\otimes T(V^*)+T(V)\otimes H\otimes I^*$ (cf. \cite[Appendix~A]{BB} or Section \ref{triangularideals}) which are also coideals, and hence $A/J$ is a bialgebra or Hopf algebra with a triangular decomposition. Using the description of the coproduct $\Delta$ in terms of the left and right YD-structures on $A$, the triangular ideals $J$ that are also coideals are simply those triangular ideals for which $I$ (and $I^*$) are YD-submodules for both $\delta_l$ and $\delta_r$ (respectively, $\delta_l^*$ and $\delta_r^*$). If $A$ is a triangular Hopf algebra with antipode given as in Lemma~\ref{hopflemma}, then every triangular ideal which is also a coideals is automatically a Hopf ideal.

\begin{definition}
We denote the collection of triangular ideals of the form
\[
J=I\otimes H\otimes T(V^*)+T(V)\otimes H\otimes I^*
\]
for homogeneously generated $I\triangleleft T^{>1}(V)$ and $I^*\triangleleft T^{>1}(V^*)$ which are also YD-submodules for $\delta_r$, $\delta_l$ (respectively for $\delta_r^*$, $\delta_l^*$)
by $\cI_\Delta(A)$. Such ideals $J$ are called \emph{triangular Hopf ideals}.
\end{definition}

\subsection{Asymmetric Braided Drinfeld Doubles}\label{asymdrin}

A special class of Hopf algebras with triangular decomposition can be provided by braided Drinfeld doubles of primitively generated Hopf algebras over a quasitriangular base Hopf algebra $H$. This form of the Drinfeld double was introduced as the \emph{double bosonization} in \cite{Maj1,Maj2}, see also \cite[Section 3.5]{Lau} for the presentation used here. We now give a more general definition of an \emph{asymmetric} braided Drinfeld double which is suitable to capture the more general class of Hopf algebras that we find in Section~\ref{section2}, including multiparameter quantum groups, as examples. In this construction, the base Hopf algebra $H$ need not be quasitriangular, and the asymmetric braided Drinfeld double is also not quasitriangular in general.

To define the braided Drinfeld double of dually paired braided Hopf algebras $C$ and $B$ in the category $\lmod{\Drin(H)}=\leftexpsub{H}{H}{\mathcal{YD}}$ we require that $\langle~,~\rangle \colon C\otimes B\to k$ is a morphism of YD-modules. This implies that the actions and coactions on $C$ and $B$ are dual to one-another (by means of the antipode of $H$). A weaker requirement is that we consider the images of $C$ and $B$ under the forgetful functor
\[
F\colon \leftexpsub{H}{H}{\mathcal{YD}}\longrightarrow\lmod{H},
\]
and require that $F(C)$ and $F(B)$ are dually paired Hopf algebras in $\lmod{H}$ (with the induced braiding under $F$), while $C$ and $B$ may not be dually paired in $\leftexpsub{H}{H}{\mathcal{YD}}$. Hence the coactions on $C$ and $B$ do not necessarily have to be related via the antipode, but the actions and resulting braidings need to be related by duality. This is captured by the following definition, where we denote the left coactions by $c\mapsto c^{(-1)}\otimes c^{(0)}$ and $b\mapsto b^{(-1)}\otimes b^{(0)}$ respectively. 

\begin{definition}
We say that two braided Hopf algebras $C,B$ in $\HYD$ are \emph{weakly dually paired} if there exists a morphism of $H$-modules $\langle~,~\rangle\colon C\otimes B\to k$ such that
\begin{align}
\langle cc',b\rangle&=\langle c',b_{(1)}\rangle\langle c,b_{(2)}\rangle ,& \langle c,bb'\rangle&=\langle c_{(1)},b'\rangle\langle c_{(2)},b\rangle,
\end{align}
for all $c,c'\in C$, and $b,b'\in B$; as well as 
\begin{align}\label{weaklyduallypairedcond}
(c^{(-1)}\triangleright b)c^{(0)}&=b^{(0)}(b^{(-1)}\triangleright c).
\end{align}
\end{definition}

This weaker duality is equivalent to an analogue of condition (\ref{eqn2}). To see this, we can regard the left $H$-coaction on $B$ as a right $H^{\cop}$-coaction, over the co-opposite Hopf algebra $H^{\cop}$ with coproduct $\tau\Delta$. Given a left $H$-action $\triangleright$, we define a right $H^{\cop}$-action $\triangleleft:=\triangleright(S^{-1}\otimes \ide)\tau$ (where $\tau$ denotes the $\otimes$-symmetry in $\Vect$). The resulting structures make $B$ a right YD-module over $H^{\cop}$. The analogue of condition (\ref{eqn2}) can be rephrased as requiring for all $b\in B, c\in C$ that
\begin{align}
&&b^{(0)}c^{(-1)}\otimes b^{(-1)}c^{(0)}&=c^{(-1)}b^{(0)}\otimes c^{(0)} b^{(-1)},\nonumber\\
&\Longleftrightarrow& b^{(0)}c^{(-1)}\otimes b^{(-1)}c^{(0)}&=({c^{(-1)}}_{(1)}\triangleright b^{(0)}){c^{(-1)}}_{(2)}\otimes {b^{(-1)}}_{(1)}(c^{(0)} \triangleleft {b^{(-1)}}_{(2)}), \label{compeqn2}\\
&\Longleftrightarrow& bc&=(c^{(-1)}\triangleright b^{(0)})(c^{(0)}\triangleleft b^{(-1)}),\label{compeqn}\\
&\Longleftrightarrow& (c^{(-1)}\triangleright b)c^{(0)}&=b^{(0)}(c\triangleleft S(b^{(-1)}))=b^{(0)}(b^{(-1)}\triangleright c),\nonumber
\end{align}
which gives condition (\ref{weaklyduallypairedcond}). We can visualize conditions (\ref{compeqn2}) and (\ref{compeqn}) using graphical calculus (with the conventions from \cite{Lau}), see Fig. \ref{braidingcomp}.
\begin{figure}[h]
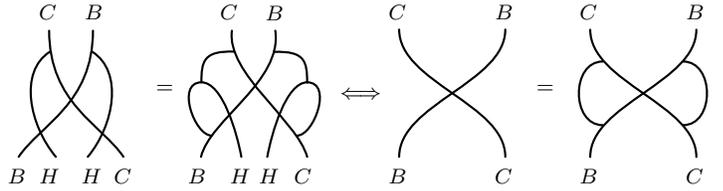

\[
\begin{array}{ccc}
\vcenter{\hbox{\import{Graphics/}{asymcond.pdf_tex}}}&\Longleftrightarrow& \vcenter{\hbox{\import{Graphics/}{asymcond2.pdf_tex}}}.
\end{array}
\]
\caption{Left and right braiding compatibility condition}
\label{braidingcomp}
\end{figure}

Given (\ref{weaklyduallypairedcond}), we can define an analogue of the braided Drinfeld double on the $k$-vector space $B\otimes H\otimes C$ (rather than using $B\otimes \Drin(H)\otimes C$) with this weaker requirement of duality on $C$ and $B$. The definition of the \emph{asymmetric} braided Drinfeld double can be given using Tannakian reconstruction theory by describing their category of modules. This is similar to the approach used for the braided Drinfeld double in \cite[Appendix~B]{Maj2} (cf. also \cite[Section 3.2]{Lau}). 

\begin{definition}
Let $C,B$ be weakly dually paired braided Hopf algebras in $\HYD$. We define the category $\YDasy{C}{B}{H}$ of \emph{asymmetric YD-modules} over $C,B$ as having objects $V$ which are left $H$-modules (also viewed as right modules by means of the inverse antipode), equipped with a left $C$-action and a right $B$-action (by morphisms of $H$-modules) which satisfy the compatibility condition
\begin{align}\label{assydcond}
((c_{(2)}\triangleright v)\triangleleft {b_{(1)}}^{(-1)})\triangleleft b_{(2)}\langle c_{(1)}, {b_{(1)}}^{(0)}\rangle &=c_{(1)}\triangleright ({c_{(2)}}^{(-1)}\triangleright(v\triangleleft b_{(1)}))\langle {c_{(2)}}^{(0)},b_{(2)}\rangle,
\end{align}
for all $v\in V, b\in B, c\in C$. Morphisms in $\YDasy{C}{B}{H}$ are required to commute with the actions of $H$, $B$ and $C$. 
\end{definition}

It may help to visualize the condition (\ref{assydcond}) using graphical notation, see Fig. \ref{asymydpicture}.
\begin{figure}[h]
\begin{center}
\import{Graphics/}{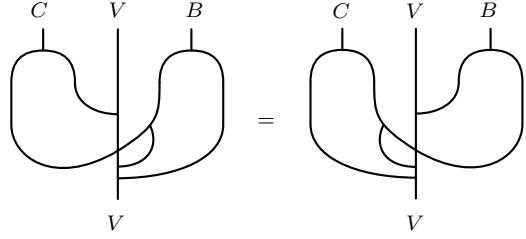}~.
\end{center}
\caption{Asymmetric Yetter--Drinfeld modules}
\label{asymydpicture}
\end{figure}

\begin{proposition}
The category $\YDasy{C}{B}{H}$ is monoidal, with a commutative diagram of monoidal fiber functors
\[
\xymatrix{&\rmod{B}(\lmod{H})\ar[dr]&&\\
\YDasy{C}{B}{H}\ar[ur]\ar[dr] &&\lmod{H}\ar[r]&\Vect.\\
&\lmod{C}(\lmod{H})\ar[ur]&&
}
\]
\end{proposition}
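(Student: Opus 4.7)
The plan is to construct a monoidal structure on $\YDasy{C}{B}{H}$ explicitly and then observe that each of the claimed fiber functors is a manifestly monoidal forgetful functor, with the diagram commuting by definition.

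First I would define the tensor product: for $V, W \in \YDasy{C}{B}{H}$, take $V \otimes W$ with the diagonal $H$-action (via $\Delta_H$). The left $C$-action uses the braided coproduct $\Delta_C$ together with the half-braiding $c \otimes v \mapsto (c^{(-1)} \triangleright v) \otimes c^{(0)}$ between the YD-module $C$ and the $H$-module $V$:
\[
c \triangleright (v \otimes w) = \bigl(c_{(1)} \triangleright ({c_{(2)}}^{(-1)} \triangleright v)\bigr) \otimes \bigl({c_{(2)}}^{(0)} \triangleright w\bigr).
\]
The right $B$-action is defined analogously using $\Delta_B$ and the mirror half-braiding. The unit object is $k$ with trivial $H$-, $C$-, and $B$-actions (via counits). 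Associativity and unitality of these actions reduce to the bialgebra axioms of $C, B$ in $\HYD$ together with the hexagon identities for the braiding; the associator and unitors are those of $\Vect$.

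The main technical step is to verify that $V \otimes W$ again satisfies the compatibility (\ref{assydcond}). Both sides of the condition must be expanded using $\Delta_C^{(2)}$, $\Delta_B^{(2)}$, and the $H$-coactions on $C$ and $B$. The computation then reduces to applying repeatedly: (i) the fact that $\Delta_C$ and $\Delta_B$ are morphisms in $\HYD$ (so the coactions and coproducts intertwine correctly); (ii) the pairing identities of $\langle -, - \rangle$ with coproducts of $C$ and $B$; and (iii) the weak duality condition (\ref{weaklyduallypairedcond}) governing how the pairing interacts with the coactions. Graphical calculus in the style of Figs.~\ref{braidingcomp}--\ref{asymydpicture} is likely the cleanest way to handle the bookkeeping; expanding (\ref{assydcond}) by means of $\Delta_C$ and $\Delta_B$ on the $C$- and $B$-slots respectively produces two diagrams that coincide after one application of (\ref{weaklyduallypairedcond}).

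Finally, the four fiber functors are the obvious ones obtained by forgetting the $C$-action (landing in $\rmod{B}(\lmod{H})$), the $B$-action (landing in $\lmod{C}(\lmod{H})$), both (landing in $\lmod{H}$), or the $H$-action in addition (landing in $\Vect$). Since the tensor product on $\YDasy{C}{B}{H}$ was defined by extending the tensor product of $H$-modules, each functor preserves tensor products and the unit strictly, hence is naturally monoidal. Commutativity of the diagram is tautological from the compositional structure of these forgetful functors. The single genuine obstacle is the verification of (\ref{assydcond}) for tensor products in the second paragraph; once that is settled, the monoidal category structure and the existence of the fiber functors fall out essentially by construction.
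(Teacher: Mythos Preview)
Your proposal is correct and follows essentially the same approach as the paper: the paper's proof is a one-line sketch saying the monoidality can be checked directly via graphical calculus, that the braiding-compatibility condition (\ref{compeqn}) (equivalent to your (\ref{weaklyduallypairedcond})) is the crucial ingredient, and that the fiber functors simply forget structure. You have supplied considerably more explicit detail than the paper does, but the strategy and the identification of the key step are the same.
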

\begin{proof}
This monadicity statement can for example be checked directly using graphical calculus. Note that condition (\ref{compeqn}) is crucial. The fiber functors simply forget the additional structure at each step.
\end{proof}

\begin{definition}\label{asymmetricdrinfelddef}
The \emph{asymmetric braided Drinfeld double} $\aDrin_H(C,B)$ is defined as the algebra obtained by Tannakian reconstruction\footnote{See e.g. \cite[9.4.1]{Maj1} or \cite[Section 2.3]{Lau}.}  on $B\otimes H\otimes C$ applied to the functor $\YDasy{C}{B}{H}\longrightarrow \Vect$.
Hence $\lmod{\aDrin_H(C,B)}$ and $\YDasy{C}{B}{H}$ are canonically equivalent as categories.
\end{definition}

\begin{proposition}\label{asymmetricdrinrel}
An explicit presentation for the asymmetric braided Drinfeld double $\aDrin_H(C,B)$ on the $k$-vector space $B\otimes H\otimes C$ can be given as follows: the multiplication on $B$ is opposite, and for $c\in C$, $b\in B$ and $h\in H$ we have
\begin{align}
hb&=(h_{(2)}\triangleright b)h_{(1)},\\hc&=(h_{(1)}\triangleright c)h_{(2)},\\
b_{(2)}S^{-1}({b_{(1)}}^{(-1)})c_{(2)}\langle c_{(1)}\otimes {b_{(1)}}^{(0)}\rangle &=c_{(1)}{c_{(2)}}^{{(-1)}}b_{(1)}\langle {c_{(2)}}^{{(0)}}\otimes b_{(2)}\rangle.
\end{align}
The coproducts are given by
\begin{align}
\Delta(h)&=h_{(1)}\otimes h_{(2)},\\
\Delta(b)&={b_{(1)}}^{(0)}\otimes b_{(2)}S^{-1}({b_{(1)}}^{(-1)}),\\
\Delta(c)&={c_{(1)}}{c_{(2)}}^{{(-1)}}\otimes {c_{(2)}}^{{(0)}},
\end{align}
and the antipode is
\begin{align}
S(h)&=S(h), &S(b)&=S^{-1}(b^{(0)})b^{(-1)}, &S(c)&=S(c^{{(-1)}})S(c^{{(0)}}),
\end{align}
using the respective given structures on $H$, $B$, and $C$.
\end{proposition}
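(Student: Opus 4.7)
The plan is to deduce the presentation directly from the Tannakian definition of $\aDrin_H(C,B)$ given in Definition~\ref{asymmetricdrinfelddef}, rather than constructing the algebra abstractly and then checking it represents the category. By the monadicity statement in the preceding proposition, the underlying vector space of the reconstructed algebra identifies canonically with $B \otimes H \otimes C$, and its bialgebra structure is determined by unwinding what it means for a vector space with actions of $H$, $B$, $C$ to lie in $\YDasy{C}{B}{H}$. The verification proceeds in three steps, which I sketch in turn.

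First, I would derive the semidirect product relations. Requiring the right $B$-action and left $C$-action to be morphisms of left $H$-modules forces, after converting the right action to a left action via the antipode, the relations $hb = (h_{(2)} \triangleright b) h_{(1)}$ and $hc = (h_{(1)} \triangleright c) h_{(2)}$. The fact that the multiplication on $B$ inside $\aDrin_H(C,B)$ is opposite to the one in $B$ arises precisely because $B$ acts on the right, so reconstruction reverses the factor order on the $B$-component. Second, for the cross relation between $b$ and $c$, I would rewrite the compatibility condition (\ref{assydcond}), applying the weak-duality identity (\ref{weaklyduallypairedcond}) (equivalently (\ref{compeqn})) to transport the $B$-coaction across the $C$-action, yielding
\[
b_{(2)} S^{-1}(b_{(1)}^{(-1)}) c_{(2)} \langle c_{(1)}, b_{(1)}^{(0)} \rangle = c_{(1)} c_{(2)}^{(-1)} b_{(1)} \langle c_{(2)}^{(0)}, b_{(2)} \rangle.
\]
The inverse antipode $S^{-1}$ enters through the conversion of the left $H$-coaction on $B$ into a right $H^{\cop}$-coaction (as explained in the discussion preceding Definition~\ref{asymmetricdrinfelddef}) when translating the module-theoretic compatibility into an algebraic relation on $B \otimes H \otimes C$.

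Third, I would extract the coproducts and antipode from the monoidal structure of the fiber functor. Since the diagram
\[
\YDasy{C}{B}{H}\longrightarrow \rmod{B}(\lmod{H}),\quad \YDasy{C}{B}{H}\longrightarrow \lmod{C}(\lmod{H}),\quad \YDasy{C}{B}{H}\longrightarrow \lmod{H}
\]
consists of monoidal fiber functors, the coproduct on $H$ is unchanged, while the coproducts on $B$ and $C$ are the standard bosonization coproducts: the braided coproducts inside $\lmod{H}$ translate, via the $H$-coactions on $B$ and $C$, into
\[
\Delta(b) = b_{(1)}^{(0)} \otimes b_{(2)} S^{-1}(b_{(1)}^{(-1)}), \qquad \Delta(c) = c_{(1)} c_{(2)}^{(-1)} \otimes c_{(2)}^{(0)}.
\]
The antipode formulas follow by solving the convolution inverse equation on generators, using the braided antipodes of $B$ and $C$ composed with the $H$-coaction, and extending multiplicatively.

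The hard part will be the bookkeeping of Sweedler indices, and in particular verifying that the weak duality (\ref{weaklyduallypairedcond}) — which is strictly weaker than asking $\langle~,~\rangle$ to be a morphism in $\HYD$ — is exactly sufficient for the cross relation to be compatible with the coproduct, so that $\aDrin_H(C,B)$ is genuinely a bialgebra. I expect this compatibility is cleanest to verify graphically, as in Figures~\ref{braidingcomp} and \ref{asymydpicture}, by the same braided tangle manipulations used for the classical braided Drinfeld double in \cite{Maj2,Lau}. Once the generators' relations, coproducts, and antipode are verified, the Hopf axioms on the full algebra follow because both sides of each identity are algebra, respectively coalgebra, maps on the PBW basis $B \otimes H \otimes C$.
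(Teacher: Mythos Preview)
Your proposal is correct and follows essentially the same approach as the paper: both argue by applying Tannakian reconstruction to the category $\YDasy{C}{B}{H}$ with its fiber functor to $\Vect$, extracting the algebra relations, coproduct, and antipode from the standard reconstruction formulas (as in \cite[Section~2.3]{Lau}). Your sketch is considerably more detailed than the paper's one-line proof, but the underlying strategy is identical.
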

\begin{proof}
This follows under application of reconstruction (in $\Vect$) applied to $\YDasy{C}{B}{H}$. See e.g \cite[Section 2.3]{Lau} for formulas on how to obtain the structures, including the antipode (Figure 2.1).
\end{proof}

An important feature of the braided Drinfeld double is that it has a braided monoidal category of representations, hence is quasitriangular. For the \emph{asymmetric} braided Drinfeld double to be quasitriangular, we need $H$ to be quasitriangular. If $H$ is not quasitriangular, this can be achieved by working with over $\Drin(H)$ instead of $H$ as a base Hopf algebra.


From now on, we restrict to the important special case where $B$ and $C$ are primitively generated by finite-dimensional YD-modules. This way, we obtain examples of Hopf algebras with a triangular decomposition over $H$.

\begin{lemma}\label{asymmetricdouble}
Let $V$, $V^*$ be left YD-modules over $H$, such that the action on $V^*$ is dual to the action on $V$. Then the braided tensor (co)algebras $T(V)^{\oop}$ and $T(V^*)^{\cop}$ are dually paired\footnote{We choose the opposite $T(V)^{\oop}$ and co-opposite $T(V^*)^{\cop}$ in order to avoid having to take the opposite multiplication in the resulting double (cf. Proposition \ref{asymmetricdrinrel}). As tensor algebras are braided cocommutative, this choice does not affect the formulas for the coproduct.} in the monoidal category of right modules over $H$. Further assume that the compatibility condition (\ref{weaklyduallypairedcond}) holds.

Then the asymmetric braided Drinfeld double $\aDrin_H(T(V^*)^{\cop},T(V)^{\op{op}})$ is given on $A=T(V)\otimes H\otimes T(V^*)$ subject to the usual bosonization relations (\ref{boson}) and the cross relation
\begin{align}\label{asymmetriccomm}
[f,c]=S^{-1}(v^{(-1)})\langle f,v^{(0)}\rangle-f^{(-1)}\langle f^{{(0)}},v\rangle.
\end{align}
The coalgebra structure is given by
\begin{align}
\Delta(v)&=v^{(0)}\otimes S^{-1}(v^{(-1)})+1\otimes v,&\Delta(f)&=f\otimes 1+f^{{(-1)}}\otimes f^{{(0)}}.
\end{align}
The counit is given by $\varepsilon(v)=\varepsilon(f)=0$ and the antipode can be computed using the conditions from equations (\ref{antipodecond1}) and (\ref{antipodecond2}) as
\begin{align}
S(v)&=-v^{(0)}v^{(-1)},&S(f)&=-S(f^{{(-1)}})f^{{(0)}}.
\end{align}
We can also consider quotients of the form $A/J$ for any triangular Hopf ideal $J\in \cI_\Delta(A)$. The quotient of $A$ by the maximal triangular Hopf ideal in $\cI_\Delta(A)$ is denoted by $U_H(V,V^*)$.
\end{lemma}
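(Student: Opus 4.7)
The plan is to specialize Proposition \ref{asymmetricdrinrel} to the pair $C=T(V^*)^{\cop}$ and $B=T(V)^{\op}$, and then simplify the resulting formulas on the primitive generators $v\in V$ and $f\in V^*$. First I would observe that the $H$-actions on $V$ and $V^*$ extend uniquely to $H$-module algebra structures on $T(V)$ and $T(V^*)$, which carry canonical braided Hopf algebra structures with primitive generators (the standard braided-shuffle coproduct, which is cocommutative). The standard evaluation pairing on $V^*\otimes V$ extends to a Hopf pairing $T(V^*)^{\cop}\otimes T(V)^{\op}\to k$ in $\lmod{H}$ in the usual way; the weak duality condition (\ref{weaklyduallypairedcond}) is precisely the remaining hypothesis assumed.

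Next I would verify that the general formulas of Proposition \ref{asymmetricdrinrel} collapse on generators to those claimed. Since $B=T(V)^{\op}$ appears with opposite multiplication in $\aDrin_H(C,B)$, the double-opposite recovers the original multiplication of $T(V)$, and the semidirect-type rules $hb=(h_{(2)}\triangleright b)h_{(1)}$ and $hc=(h_{(1)}\triangleright c)h_{(2)}$ reduce to the standard bosonization relations (\ref{boson}) on generators. For the cross relation, $v$ and $f$ are primitive in the respective braided bialgebras, so $\Delta^{\op}(v)=v\otimes 1+1\otimes v$ and $\Delta^{\cop}(f)=f\otimes 1+1\otimes f$; substituting these into the general cross relation of Proposition \ref{asymmetricdrinrel} and using $\langle f,1\rangle=\langle 1,v\rangle=0$ yields (\ref{asymmetriccomm}) after applying $S^{-1}$ to move the coaction component to the appropriate side.

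For the coalgebra data, applying $\Delta(b)={b_{(1)}}^{(0)}\otimes b_{(2)}S^{-1}({b_{(1)}}^{(-1)})$ and $\Delta(c)=c_{(1)}{c_{(2)}}^{(-1)}\otimes {c_{(2)}}^{(0)}$ on the primitive generators, and using that the coactions take values in $H\otimes V$ respectively $H\otimes V^*$, one obtains the displayed forms for $\Delta(v)$ and $\Delta(f)$. The counit formulas follow from $\varepsilon(v)=\varepsilon(f)=0$, which is consistent with (\ref{assum2}). The antipode on generators can either be read off from the general formula in Proposition \ref{asymmetricdrinrel}, or equivalently obtained as the unique solution of (\ref{antipodecond1}) and (\ref{antipodecond2}) in Lemma \ref{hopflemma} with $\delta_l(v)=S^{-1}(v^{(-1)})\otimes v^{(0)}$ trivial on one summand and analogously for $f$; both yield the stated $S(v)=-v^{(0)}v^{(-1)}$ and $S(f)=-S(f^{(-1)})f^{(0)}$.

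Finally, the statement on quotients is immediate from Section \ref{triangularhopfsect}: for any $J\in\cI_\Delta(A)$, $J$ is automatically a coideal, and since $A$ is a Hopf algebra with triangular decomposition, $J$ is automatically a Hopf ideal; hence $A/J$ inherits a Hopf algebra structure with triangular decomposition. The existence of a unique maximal such ideal, and thus of a canonical minimal quotient $U_H(V,V^*)$, mirrors the construction of $I_{\op{max}}$ recalled in Section \ref{triangularideals}. I expect the principal technical obstacle to lie in the careful bookkeeping of the opposite/co-opposite conventions, particularly ensuring that the $S^{-1}$ in the coproduct of $v$ and in the first summand of the bracket (\ref{asymmetriccomm}) emerges correctly from the general formula when translating between left coactions on $B=T(V)^{\op}$ and the structures on $T(V)$ itself; once this is unambiguous, the verification on generators extends to all of $A$ by the algebra/coalgebra axioms.
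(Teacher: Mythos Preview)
Your proposal is correct and follows precisely the approach the paper intends: the lemma is stated in the paper without proof, as a direct specialization of Proposition~\ref{asymmetricdrinrel} to the primitively generated case $C=T(V^*)^{\cop}$, $B=T(V)^{\oop}$, and your outline fills in exactly those details. Your closing remark about the bookkeeping of the opposite/co-opposite conventions (and the origin of the $S^{-1}$) is apt, since this is also the only point the paper flags, via the footnote, as requiring care.
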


\begin{lemma}\label{ideallemma}
Let $A=\aDrin_H(T(V^*)^{\cop}, T(V)^{\oop})$ for $V$, $V^*$ as in Lemma \ref{asymmetricdouble}. Then the maximal ideal $I_{\op{max}}(A)$ in $\cI_{\Delta}(A)$ is given by
\[
I_{\max}(A)=I_{\op{max}}(V)\otimes H\otimes T(V^*)+T(V)\otimes H\otimes I_{\op{max}}(V^*),
\]
where $I_{\op{max}}(V)$ is the maximal Nichols ideal in $T(V)$ for the left coaction on $V$, and $I_{\op{max}}(V^*)$ is the maximal Nichols ideal in $T(V^*)$ for the left coaction on $V^*$. Hence
\[
m\colon \cB(V)\otimes H\otimes \cB(V^*)\stackrel{\sim}{\longrightarrow}U_H(V,V^*)
\]
is an isomorphism of $k$-vector spaces (PBW theorem).
\end{lemma}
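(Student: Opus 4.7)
The plan is to establish the claim in two steps: first show that
\[
J_0 := I_{\op{max}}(V) \otimes H \otimes T(V^*) + T(V) \otimes H \otimes I_{\op{max}}(V^*)
\]
lies in $\cI_\Delta(A)$, and then verify that every triangular Hopf ideal is contained in $J_0$. The PBW isomorphism will then follow immediately by applying the vector-space decomposition of the asymmetric braided Drinfeld double (Proposition~\ref{asymmetricdrinrel}) to the Nichols quotients.

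For the first step, I would exploit the functoriality of the asymmetric braided Drinfeld double construction. The canonical surjections $\pi\colon T(V) \twoheadrightarrow \cB(V)$ and $\pi^*\colon T(V^*) \twoheadrightarrow \cB(V^*)$ are morphisms of braided Hopf algebras in $\HYD$ by the defining property of the Nichols ideals. The canonical Hopf algebra pairing $T(V^*) \otimes T(V) \to k$ has left radical $I_{\op{max}}(V^*)$ and right radical $I_{\op{max}}(V)$, so it descends to a non-degenerate pairing $\cB(V^*) \otimes \cB(V) \to k$; the weak duality compatibility (\ref{weaklyduallypairedcond}) survives because the surjections intertwine both the $H$-actions and the $H$-coactions. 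Feeding this data into Definition~\ref{asymmetricdrinfelddef} produces a surjective bialgebra morphism
\[
A \twoheadrightarrow \aDrin_H(\cB(V^*)^{\cop}, \cB(V)^{\oop})
\]
whose kernel, by construction, is exactly $J_0$. This simultaneously shows that $J_0$ is a Hopf ideal respecting the triangular grading and that the commutator condition (\ref{idealcond}) holds.

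For maximality, take any triangular Hopf ideal $J = I \otimes H \otimes T(V^*) + T(V) \otimes H \otimes I^*$. Since $T(V) \otimes H$ is closed under the coproduct by assumption (\ref{assum1}) and $J$ is required to be a coideal, the image of $T(V)$ in $A/J$ forces $I$ to be a braided coideal of $T(V)$ in the category $\HYD$. Together with the hypothesis that $I \subseteq T^{>1}(V)$ is homogeneously generated and a YD-submodule, the universal property characterizing the Nichols algebra $\cB(V)$ as the maximal quotient of $T(V)$ with these properties yields $I \subseteq I_{\op{max}}(V)$; the symmetric argument on the $V^*$ side gives $I^* \subseteq I_{\op{max}}(V^*)$, whence $J \subseteq J_0$.

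The main obstacle I anticipate lies in verifying the functoriality claim carefully: that the weak duality condition truly descends to the Nichols quotients, and, more generally, that all of the structural conditions (\ref{eqn1})--(\ref{eqn5}) for a bialgebra with triangular decomposition pass to the target, so that the above surjection lands in a genuine asymmetric braided Drinfeld double in the precise sense of Definition~\ref{asymmetricdrinfelddef}. Once that functoriality is established, the PBW isomorphism $m\colon \cB(V) \otimes H \otimes \cB(V^*) \xrightarrow{\sim} U_H(V,V^*) = A/J_0$ is immediate from the construction of the asymmetric braided Drinfeld double on the tensor product vector space.
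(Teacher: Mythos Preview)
Your proposal is correct and follows the same approach as the paper's proof: both arguments rest on the fact that the Nichols quotients $\cB(V)$ and $\cB(V^*)$ remain weakly dually paired braided Hopf algebras in $\HYD$, so that their asymmetric braided Drinfeld double is realized as the quotient $A/J_0$, which forces $J_0\in\cI_\Delta(A)$. The paper's proof is a terse one-sentence appeal to this functoriality and simply asserts minimality of the resulting double; your Step~2 supplying the maximality argument via the universal property of the Nichols ideal makes explicit what the paper leaves implicit.
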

\begin{proof}
This is clear as we know that $T(V)^{\oop}/{I_{\op{max}}(V)}$ and $T(V^*)^{\cop}/{I_{\op{max}}(V^*)}$ are weakly dually paired braided Hopf algebras and their asymmetric braided Drinfeld double is given by the quotient $\aDrin_H(T(V^*)^{\cop},T(V)^{\oop})/{I_{\max}(A)}$, which must be the minimal double $U_H(V,V^*)$.
\end{proof}

A perfect pairing between the positive and negative part of $U_H(V,V^*)$
implies the existence of a formal power series $\coev$ satisfying the axioms of coevaluation. We expect that this can be used to give a braiding on a suitable category of modules over $U_H(V, V^*)$ (where $\cB(V)$ acts integrally), and all modules have the structure of being YD-modules over $H$. 


\subsection{Symmetric Triangular Decompositions}

The rest of this section will be devoted to the question of recovering the braided Drinfeld double over a quasitriangular base Hopf algebra $H$ as a special case of the asymmetric braided Drinfeld double. For this, we introduce the idea of a Hopf algebra with a \emph{symmetric} triangular decomposition:

\begin{definition}
Let $A$ be a bialgebra with a triangular decomposition over $H$. If the associated coactions satisfy that the right coaction $\delta_r^*$ of $V^*$ is the dual coaction to $\delta_l$, i.e.
\begin{equation}\label{symmetry}
\langle f^{(0)}\otimes v\rangle f^{(-1)}=\langle f\otimes v^{\ov{(0)}}\rangle v^{\ov{(-1)}},
\end{equation}
and the coactions $\delta_r$ and $\delta_l^*$ are compatible in the same way, then we call the triangular decomposition \emph{symmetric}.
\end{definition}

In the case where $H$ is a quasitriangular Hopf algebra, we can recover a special case of the definition of the braided Drinfeld double given in \cite[Example 3.5.6]{Lau} from the more general form given in Definition \ref{asymmetricdrinfelddef}, and the resulting triangular decomposition will be symmetric. For this, note that the universal $R$-matrix and its inverse give functors (see \cite{Maj13})
\begin{align*}
R^{-1}\colon \lmod{H}&\longrightarrow \leftexpsub{H}{H}{\mathcal{YD}}, &(V,\triangleright) &\longmapsto (V,\triangleright, (\ide_H\otimes \triangleright)(R^{-1}\otimes \ide_V)),\\
R\colon \rmod{H}&\longrightarrow \leftexpsub{H}{H}{\mathcal{YD}}, &(V,\triangleleft) &\longmapsto (V,\triangleleft, ( \triangleleft\otimes\ide_H)(\ide_V\otimes R)).
\end{align*}
Given a right $H$-module $V$, we can hence give $V$ a left YD-module structure using $R^{-1}$, and $V^*$ the dual YD-module structure using $R$. Note that (\ref{weaklyduallypairedcond}) is satisfied in this case. With these structures, the relation (\ref{asymmetriccomm}) becomes
\begin{align*}
[f,c]&=S^{-1}(R^{-(2)})\langle f,v \triangleleft R^{-(1)}\rangle -R^{-(1)}\langle R^{-(2)}\triangleright f, v\rangle \\
&=R^{(2)}\langle R^{(1)}\triangleright f,v\rangle -R^{-(1)}\langle R^{-(2)}\triangleright f,v\rangle.
\end{align*}
This is precisely the cross relation of \cite[Example 3.5.6]{Lau}. Note that we use $R=(S^{-1}\otimes \ide_H)R^{-1}$. This proves the following Proposition:

\begin{proposition}\label{recoverdouble}
Braided Drinfeld doubles of braided Hopf algebras over a qua\-si\-triangular Hopf algebras are asymmetric braided Drinfeld doubles (as in Definition \ref{asymmetricdrinfelddef}) with a symmetric triangular decomposition. 
\end{proposition}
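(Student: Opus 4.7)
The plan is to establish the proposition by verifying two things: first, that a braided Drinfeld double of braided Hopf algebras over a quasitriangular $H$ can be realized as an asymmetric braided Drinfeld double in the sense of Definition~\ref{asymmetricdrinfelddef}; and second, that its induced triangular decomposition is symmetric in the sense of (\ref{symmetry}). The cross-relation computation performed in the paragraph immediately preceding the proposition already accomplishes the main identification for the first task, so the proof amounts to pinning down the remaining structural compatibilities.

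For the first verification, I would unwind the YD-structures induced by the functors $R^{-1}$ and $R$ on $V$ and $V^*$ respectively, and confirm that the weak dual pairing condition (\ref{weaklyduallypairedcond}) holds. Substituting $\delta_l(v)=R^{-(1)}\otimes (R^{-(2)}\triangleright v)$ and the analogous coaction on $V^*$, the condition reduces to identities of the form $(S\otimes \ide)R=R^{-1}$ together with the hexagon axioms $(\Delta\otimes\ide)R=R_{13}R_{23}$ and $(\ide\otimes\Delta)R=R_{13}R_{12}$. Once (\ref{weaklyduallypairedcond}) is in hand, the explicit computation above the proposition shows that (\ref{asymmetriccomm}) specialises precisely to the cross relation of the braided Drinfeld double from \cite[Example~3.5.6]{Lau}, and Lemmas~\ref{asymmetricdouble} and~\ref{ideallemma} then guarantee that the maximal quotient is the usual braided Drinfeld double (up to passage to the Nichols ideals, which are $R$-equivariant).

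For the symmetry claim (\ref{symmetry}), I would compute both sides explicitly. Writing $\delta_l(v)=R^{-(1)}\otimes(R^{-(2)}\triangleright v)$ and observing that the right coaction $\delta_r^*(f)$ induced on $V^*$ through $R$ is forced to be the dual of $\delta_l$ (because both coactions are built from the same universal $R$-matrix and the action on $V^*$ is dual to that on $V$), the identity
\[
\langle f^{(0)},v\rangle f^{(-1)}=\langle f,v^{\ov{(0)}}\rangle v^{\ov{(-1)}}
\]
reduces via the adjoint relation $\langle f\triangleleft h,v\rangle=\langle f,h\triangleright v\rangle$ to the tautology $R^{(1)}\langle f\triangleleft R^{(2)},v\rangle=R^{(1)}\langle f,R^{(2)}\triangleright v\rangle$. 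The analogous compatibility for $\delta_r$ and $\delta_l^*$ is obtained by the identical argument with $R^{-1}$ in place of $R$.

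The main obstacle is not conceptual but rather notational bookkeeping: converting freely between left and right $H$-(co)actions via the antipode, reconciling the Sweedler conventions for $\delta_l,\delta_r,\delta_l^*,\delta_r^*$ fixed in Section~\ref{definitions} with those implicit in the functors $R^{\pm 1}$, and ensuring the duality pairings used in the asymmetric framework agree with those of the classical braided Drinfeld double. Once the conventions are fixed, every identity to verify collapses to the quasitriangularity axioms together with $(S\otimes\ide)R=R^{-1}$.
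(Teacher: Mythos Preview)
Your proposal is correct and follows essentially the same route as the paper. The paper's proof consists precisely of the paragraph preceding the proposition: it asserts (\ref{weaklyduallypairedcond}) without argument, performs the cross-relation computation reducing (\ref{asymmetriccomm}) to the relation of \cite[Example~3.5.6]{Lau} via $R=(S^{-1}\otimes\ide_H)R^{-1}$, and treats symmetry as implicit in the phrase ``and $V^*$ the dual YD-module structure using $R$.'' You are simply making explicit what the paper leaves to the reader --- the quasitriangularity identities behind (\ref{weaklyduallypairedcond}) and the direct verification of (\ref{symmetry}) --- so there is no substantive divergence.
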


Note that a partial converse statement also holds: Given an asymmetric braided Drinfeld double that is symmetric, then it can be displayed as a braided Drinfeld double in the sense of \cite{Maj2,Lau}, but unless $H$ is quasitriangular (and the coactions induced by the $R$-matrix), we need to view it over the base Hopf algebra $\Drin(H)$ instead. If the positive and negative part are perfectly paired, then we can give a formal power series describing the $R$-matrix and an appropriate subcategory (corresponding to the Drinfeld center) is braided. 

Particularly interesting examples of such braided Drinfeld doubles include the quantum groups $\Ug$ for generic $q$, and the small quantum groups $u_q(\mathfrak{g})$ (see \cite[Section~4]{Maj2}). Their construction uses the concept of a \emph{weak} quasitriangular structure for which a similar statement to Proposition \ref{recoverdouble} can be made. We will see in Section~\ref{multiparametersection} that multiparameter quantum groups can be viewed as examples of asymmetric braided Drinfeld doubles that are not symmetric. Further, all the pointed Hopf algebras classified in the main result of this paper (Theorem \ref{mainclassificationthm}), under the additional assumption that the braiding is of separable type and some commutators do not vanish, are asymmetric braided Drinfeld doubles (Theorem \ref{drinfeldtheorem}).


\section{Classification over a Group}\label{section2}

In this section, we denote by $A= T(V)\otimes kG\otimes T(V^*)$ a bialgebra with triangular decomposition over a group algebra $kG$. Note that we do not assume $G$ to be finite.

\subsection{Preliminary Observations}\label{preliminaryobs}

Hopf algebras that are generated by grouplike and skew-primitive elements are always pointed. We show that assuming a Hopf algebra has triangular decomposition over a group and is of what we call \emph{weakly separable type}, it is generated by skew-primitive elements and hence pointed.

\begin{lemma}
For a bialgebra $A$ with triangular decomposition over $kG$ as above, there exists a basis $v_1,\ldots,v_n$ of $V$ and $f_1, \ldots, f_n$ of $V^*$, as well as invertible matrices $M$ and $N$ such that
\begin{align}
\Delta(v_i)&=v_i\otimes g_i+\sum_j{M_{ji}h_j\otimes v'_j},&\Delta(f_i)&=f_i\otimes a_i+\sum_j{N_{ji}b_j\otimes f'_j}\label{coprodonv},
\end{align}
where $v'_1,\ldots,v'_n$ is another basis of $V$, and $f_1',\ldots,f_n'$ of $V^*$.
\end{lemma}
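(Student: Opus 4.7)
The plan is to combine two observations from the setup preceding the lemma: that the coproduct of each $v\in V$ decomposes uniquely as $\delta_r(v)+\delta_l(v)$, and that any left or right $kG$-comodule is automatically $G$-graded. The coaction decomposition follows from assumptions (\ref{assum1}) and (\ref{assum2}) together with the counit axiom, which were already used to show $\Delta(V)\subseteq H\otimes V+V\otimes H$; this sum is in fact direct because the two summands lie in distinct bigraded pieces of $A\otimes A$ (the natural grading puts $H$ in degree $0$ and $V$ in degree $1$, so $H\otimes V$ and $V\otimes H$ occupy different components of $A_0\otimes A_1$ versus $A_1\otimes A_0$).

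The first step is then to use $\delta_r$ to $G$-grade $V=\bigoplus_{g\in G}V^r_g$ and pick a basis $v_1,\ldots,v_n$ of homogeneous vectors with respective degrees $g_i\in G$, so that $\delta_r(v_i)=v_i\otimes g_i$. This immediately supplies the first summand $v_i\otimes g_i$ of (\ref{coprodonv}).

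The second step is to independently use $\delta_l$, which induces another $G$-grading $V=\bigoplus_{h\in G}V^l_h$, and choose a (generally different) basis $v'_1,\ldots,v'_n$ of $\delta_l$-homogeneous vectors with degrees $h_j$, so that $\delta_l(v'_j)=h_j\otimes v'_j$. Expanding $v_i=\sum_j M_{ji}v'_j$ defines an invertible change-of-basis matrix $M=(M_{ji})$, and applying $\delta_l$ yields $\delta_l(v_i)=\sum_j M_{ji}h_j\otimes v'_j$, the second summand of (\ref{coprodonv}). Combining $\Delta(v_i)=\delta_r(v_i)+\delta_l(v_i)$ gives the desired formula for $V$. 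The dual YD-structures $\delta_r^*$ and $\delta_l^*$ on $V^*$ are treated by the identical two-step procedure, producing the bases $f_i$ and $f'_j$ together with the invertible matrix $N$.

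I do not anticipate any substantial obstacle in this argument; the only point warranting a short verification is the directness of $H\otimes V+V\otimes H$ inside $A\otimes A$, which follows immediately from the bigrading observation above. Everything else is routine bookkeeping, once one recognises that comodule structures over a group algebra are nothing but gradings, so that choosing adapted bases on either side simultaneously diagonalises the two coactions up to an invertible base change.
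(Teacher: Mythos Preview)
Your argument is correct and follows essentially the same approach as the paper: pick a homogeneous basis for the right coaction $\delta_r$, an independent homogeneous basis for the left coaction $\delta_l$, and let $M$ be the change-of-basis matrix between them (and similarly for $V^*$ with $N$). Your additional remark on the directness of $H\otimes V+V\otimes H$ via the bigrading is a welcome clarification that the paper leaves implicit in its use of the projections $p_1,p_2$.
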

\begin{proof}
Let $v_1, \ldots, v_n$ be a homogeneous basis for the YD-compatible grading $\delta_r$ and $v'_1,\ldots,v'_n$ a homogeneous basis for $\delta_l$. The form (\ref{coprodonv}) of the coproducts is obtained by letting $M$ be the base change matrix from $\lbrace v_i\rbrace$ to $\lbrace v_i'\rbrace$. The same argument works for the dual $V^*$, denoting the base change matrix from $\lbrace f_i\rbrace$ to $\lbrace f_i'\rbrace$ by $N$.
\end{proof}

\begin{lemma}
A bialgebra $A$ with a triangular decomposition over $kG$ as above is a Hopf algebra, with antipode $S$ given on generators of the form $v_i$, $f_i$ as in (\ref{coprodonv}) by
\begin{align}
S(v_i)&=-\sum_{j}M_{ji}(h_j^{-1}\triangleright v'_j)h_j^{-1}g_i^{-1},&S(f_i)&=-\sum_{j}N_{ji}(f'_j\triangleleft b_j)b_j^{-1}a_i^{-1}.
\end{align}
\end{lemma}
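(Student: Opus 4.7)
The strategy is to invoke Lemma~\ref{hopflemma}, which reduces the problem to exhibiting a $k$-linear map $S\colon V\oplus V^*\to V\otimes kG\oplus V^*\otimes kG$ satisfying the antipode identities (\ref{antipodecond1}) and (\ref{antipodecond2}); the lemma then guarantees that $S$ extends uniquely to an antipode on all of $A$. Since over $kG$ every element $g\in G$ is grouplike with $S(g)=g^{-1}$, both identities simplify considerably.

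First I would read off the YD-coactions from the stated forms of the coproducts, obtaining $\delta_r(v_i)=v_i\otimes g_i$ and $\delta_l(v_i)=\sum_j M_{ji}\,h_j\otimes v'_j$, and analogously $\delta_r^*(f_i)=f_i\otimes a_i$, $\delta_l^*(f_i)=\sum_j N_{ji}\,b_j\otimes f'_j$. Substituting into the first equation of (\ref{antipodecond1}) and using $\Delta(h_j)=h_j\otimes h_j$ together with $S(h_j)=h_j^{-1}$ yields
\[
S(v_i)\,g_i+\sum_j M_{ji}\,(h_j^{-1}\triangleright v'_j)\,h_j^{-1}=0,
\]
which, after right multiplication by $g_i^{-1}$, gives exactly the stated formula for $S(v_i)$. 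The same calculation applied to the basis $\{f_i\}$ of $V^*$ and to (\ref{antipodecond2}) produces the claimed formula for $S(f_i)$.

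The main obstacle is then to verify that the \emph{second} equations in (\ref{antipodecond1}) and (\ref{antipodecond2}) also hold for these maps, since a priori these impose independent constraints on $S$. Here the essential input is the YD-compatibility condition (\ref{ydcond}) relating $\delta_r$ and $\delta_l$ through the common $kG$-action on $V$, together with the cross-compatibilities (\ref{eqn1})--(\ref{eqn2}) from the free case linking $\delta_r,\delta_l$ with $\delta_r^*,\delta_l^*$. After substituting the proposed $S(v_i)$ into the second equation of (\ref{antipodecond1}) and exploiting the grouplike coproducts of the elements $g_i,h_j$, the expression can be rearranged so that the required identity reduces to a direct consequence of these compatibilities; the argument for $S(f_i)$ is parallel.

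With both antipode conditions verified on generators of $V\oplus V^*$, Lemma~\ref{hopflemma} then provides the unique extension of $S$ to an antipode on all of $A$, showing that $A$ is a Hopf algebra with triangular decomposition in the sense of Definition~\ref{triangulardecompdefn}.
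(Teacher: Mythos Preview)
Your approach is essentially the same as the paper's: both reduce to the antipode conditions of Lemma~\ref{hopflemma}, solve the first equation in (\ref{antipodecond1}) (resp.\ (\ref{antipodecond2})) for $S(v_i)$ (resp.\ $S(f_i)$) using that $kG$ is a Hopf subalgebra with $S(g)=g^{-1}$, and then extend. The paper's proof is considerably more terse: it simply notes that the formulas are forced by the antipode axioms and that freeness of $T(V)$ and $T(V^*)$ guarantees a unique extension to an anti-algebra and anti-coalgebra map on $A$, without writing out a separate verification of the second equations.

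One small correction: your invocation of (\ref{eqn1})--(\ref{eqn2}) for the second identities is misplaced. Those conditions couple the coactions on $V$ with those on $V^*$ (they arise from compatibility of $\Delta$ with the commutator relation $[f,v]=\beta(f,v)$), whereas each pair of equations in (\ref{antipodecond1}) and (\ref{antipodecond2}) involves only the structures on $V$ or only those on $V^*$. What actually makes the second equation automatic is the standard left/right convolution-inverse argument: solving the first equation gives a left inverse $S$ of $\ide$, solving the second gives a right inverse $S'$, and both extend to anti-algebra maps on $A$ by freeness of $T(V)$, $T(V^*)$ (together with the fact that $S$ is already fixed on $kG$); hence $S=S*\ide*S'=S'$. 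This is the content hidden in the paper's appeal to freeness.
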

\begin{proof}
The antipode axioms require that $S$ is of the form stated, using that $kG$ is a Hopf subalgebra, cf. (\ref{antipodecond1})--(\ref{antipodecond2}). As $T(V)$ and $T(V^*)$ are free, defining $S$ on the generators extends uniquely to an anti-algebra and anti-coalgebra map on all of $A$.
\end{proof}

\begin{definition}\label{indecprop}
A Hopf algebra $A$ with triangular decomposition over a group is called of \emph{weakly separable type} if the right degrees $g_i,\ldots, g_n$ of $V$ are pairwise distinct group elements, and  the same holds for the left degrees $h_1,\ldots,h_n$ of $V$ as well as the dual degrees.
\end{definition}

We observe that being of weakly separable type over a group implies that $V$ and $V^*$ have 1-dimensional homogeneous components. This gives that for a homogeneous basis element $v_i$ of degree $a_i$, $g\triangleright v_i\neq 0$ is homogeneous of degree $ga_ig^{-1}$ which hence has to be a scalar multiple of a basis element $v_{g(i)}$ where $g(i)$ is an index $1, \ldots, n$. Hence we obtain an action of $G$ on $\lbrace 1,\ldots, n\rbrace$. To fix notation, we write
\begin{align}
g\triangleright v_i&=\lambda_i(g)v_{g(i)},&f_i\triangleleft g=\mu_i(g)f_{g(i)}&.
\end{align}
We will see that for $A$ of weakly separable type, the base change matrices $M$, $N$ are diagonal matrices and can be chosen to be the identity matrix by rescaling of the diagonal bases. This implies that $A$ is generated by primitive and group-like elements and hence pointed. It is a conjecture in \cite[Introduction]{AS} that all finite-dimensional pointed Hopf algebras over an algebraically closed field of characteristic zero are in fact generated by skew-primitive and group-like elements.

\begin{proposition}\label{primitiveprop}
If $A$ is of weakly separable type, then there exist bases $\lbrace v_i\rbrace$ of $V$ and $\lbrace f_i\rbrace$ of $V^*$ consisting of skew-primitive elements, such that
\begin{align}\label{primitivecoprod}
\Delta(v_i)&=v_i\otimes g_i+h_i\otimes v_i,
&\Delta(f_i)&=f_i\otimes a_i+b_i\otimes f_i,
\end{align}
and the antipode on these skew-primitive elements is given by $S(v_i)=(h_i^{-1}\triangleright v_i)h_i^{-1}g_i^{-1}$, $S(f_i)=(f_i\triangleleft b_i)b_i^{-1}a_i^{-1}$.
\end{proposition}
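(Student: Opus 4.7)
The plan is to apply coassociativity to the coproduct of $v_i$ given in equation (\ref{coprodonv}) and extract constraints on the base-change matrix $M$ that force it to be \emph{monomial} (exactly one nonzero entry in each row and column). After rescaling and reindexing the basis $\lbrace v_i'\rbrace$, we may then assume $v_i' = v_i$ for every $i$, and similarly for the dual.

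First I would compute both sides of $(\Delta\otimes\ide)\Delta(v_i) = (\ide\otimes\Delta)\Delta(v_i)$. Since $v_j' = \sum_k (M^{-1})_{kj}\,v_k$ and (\ref{coprodonv}) gives $\Delta(v_k) = v_k\otimes g_k + \sum_\ell M_{\ell k}\,h_\ell\otimes v_\ell'$, one finds $\Delta(v_j') = \sum_k (M^{-1})_{kj}\,v_k\otimes g_k + h_j\otimes v_j'$. Comparing the mixed middle-tensor terms on both sides of coassociativity, I expect to extract relations of the form $M_{ji}(M^{-1})_{kj}(g_i - g_k) = 0$ for all $i,j,k$. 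Because the $g_i$ are pairwise distinct by the weak separability assumption, this forces $M_{ji}(M^{-1})_{kj} = 0$ whenever $i \neq k$. Fixing $j$ and viewing these as products of entries of the $j$-th row of $M$ with the $j$-th column of $M^{-1}$, invertibility of $M$ rules out vanishing rows or columns, so each row and each column of $M$ has exactly one nonzero entry. Hence $M$ is a monomial matrix, determining a permutation $\sigma$ with $v_i$ a scalar multiple of $v_{\sigma(i)}'$.

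With $M$ monomial, each $v_i$ is already homogeneous for $\delta_l$, of degree $h_{\sigma(i)}$. After relabeling by setting $h_i := h_{\sigma(i)}$ (equivalently, reindexing the left homogeneous basis by $\sigma$) and rescaling so that the remaining nonzero entry of $M$ is $1$, the coproduct takes the skew-primitive form (\ref{primitivecoprod}). The identical argument applied to $N$ under the dual weak-separability hypothesis produces bases $\lbrace f_i\rbrace$ of $V^*$ with $\Delta(f_i) = f_i\otimes a_i + b_i\otimes f_i$.

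Finally, the antipode formulas follow by specializing the formulas of the preceding Lemma to $M = N = I$, or directly by imposing $m\circ(S\otimes\ide)\circ\Delta(v_i) = 0$ and using the bosonization relation $h_i^{-1} v_i = (h_i^{-1}\triangleright v_i)h_i^{-1}$ from (\ref{boson}) to move the factor of $h_i^{-1}$ past $v_i$; the computation for $f_i$ is symmetric, relying on $f_i b_i^{-1} = b_i^{-1}(f_i \triangleleft b_i)$. I expect the main obstacle to be the coassociativity bookkeeping in the first step, where one must carefully identify which tensor slot each term occupies (so that the separation of $g_i$'s can actually be invoked) before concluding that $M$ is monomial; once the monomial-matrix conclusion is established, the remaining steps are routine.
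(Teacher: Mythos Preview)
Your proposal is correct and follows essentially the same route as the paper: apply coassociativity to the coproduct formula (\ref{coprodonv}), use the distinctness of the $g_i$ (and implicitly of the $h_j$, to separate the $j$-index when comparing tensor components) to derive $M_{ji}(M^{-1})_{kj}=0$ for $i\neq k$, conclude that $M$ is monomial, then reindex and rescale. The paper phrases the uniqueness step slightly differently---observing that $M_{ji}\neq 0$ forces $v_i$ and $v_j'$ to be proportional and invoking weak separability a second time---whereas you argue directly via invertibility of $M$; both arguments are equivalent, and the antipode step is identical.
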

\begin{proof}
Consider the right and left coactions $\delta_r$ and $\delta_l$ from Section \ref{definitions}.
Choosing a basis $v_1,\ldots,v_n$ homogeneous for $\delta_l$ and $v'_1,\ldots, v'_n$ homogeneous for $\delta_r$, (\ref{coprodonv}) gives
\begin{equation}
\Delta(v_i)=v_i\otimes g_i+\sum_j{M_{ji} h_j\otimes v'_j},
\end{equation}
where $M=(M_{ji})$ is the base change matrix.
By coassociativity, we find that
\begin{equation}\label{eq2}
\sum_{j,k}{M_{ji}(M^{-1})_{kj}h_j\otimes v_k\otimes g_k}=\sum_j{M_{ji} h_j\otimes v'_j\otimes g_i}.
\end{equation}
By weak separability of $\delta_r$ and $\delta_l$ we now have for each $j=1,\ldots, n$:
\begin{align}
\sum_{k}{M_{ji}(M^{-1})_{jk}v_k\otimes g_k}=M_{ji}v'_j\otimes g_i.
\end{align}
Note that $M_{ji}\neq 0$ for at least some $i$. This implies that $(M^{-1})_{kj}=0$ unless $k=i$ as the $g_i$ are all distinct. Further, if $M_{ji}\neq 0$, then $v_i$ and $v'_j$ are proportional. This can only be true for at most one $i$ for given index $j$ by weak separability. Hence by reordering the basis $v'_1, \ldots, v'_n$ we find that $M$ is a diagonal matrix and can rescale the basis $\lbrace v'_i\rbrace$ such that $M$ is the identity matrix. Hence we have
$\Delta(v_i)=v_i\otimes g_i + h_i\otimes v_i$.
The antipode conditions for $A$ give (using Lemma \ref{hopflemma}) that $S$ is of the form claimed.
\end{proof}

\begin{remark}
The bases $\lbrace v_i\rbrace$ and $\lbrace f_i \rbrace$ do not necessarily need to be orthogonal with respect to the pairing $\langle ~,~\rangle$.
We will see in Theorem \ref{mainclassificationthm} that if the characters $\lambda_i$ are all distinct, then the bases can be chosen to be dual bases.
\end{remark}

\begin{remark}\label{primitivenotation}
In the following, we fix a basis $v_1, \ldots,v_n$ for $V$ and $f_1,\ldots, f_n$ for $V^*$ such that
\begin{align}
\Delta(v_i)&=v_i\otimes g_i+h_i\otimes v_i,&\Delta(f_i)&=f_i\otimes a_i+b_i\otimes f_i, &i=1,\ldots,n.
\end{align}
\end{remark}

A direct observation from Proposition \ref{primitiveprop} is that the algebra $A$ is generated by primitive and grouplike elements (which precisely give the group $G$) and hence pointed. We have the following restrictions on the group structure.

\begin{proposition}\label{symmetricprop}
In the group $G$, the relations $[g_i,a_j]=[h_i,a_j]=1$ and $[h_i, b_j]=[g_i,b_j]=1$ hold  for all $i,j=1, \ldots,n$. In particular, if $A$ has a symmetric triangular decomposition, then the subgroup of $G$ generated by all degrees is abelian.

Further, the following identities for the characters of the group action hold:
\begin{align}\label{characteridentities}
\mu_j(h_i)&=\lambda_i(a_j)^{-1},&\mu_j(g_i)=\lambda_i(b_j)^{-1}.&
\end{align}
\end{proposition}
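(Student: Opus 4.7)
The plan is to exploit the compatibility conditions (\ref{eqn1})--(\ref{eqn4}) that govern a bialgebra with triangular decomposition, applied to the skew-primitive basis $\lbrace v_i\rbrace$, $\lbrace f_j\rbrace$ furnished by Proposition \ref{primitiveprop}. Although those identities were stated in the free case, the same derivation (comparing $\Delta(f_jv_i)-\Delta(v_if_j)$ with $\Delta([f_j,v_i])\in kG\otimes kG$) goes through for an arbitrary bialgebra with triangular decomposition $A=T(V)/I\otimes kG\otimes T(V^*)/I^*$, because the PBW isomorphism guarantees linear independence of the ``mixed'' basis elements of $A\otimes A$ that we need to equate to zero.

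Substituting $v=v_i$, $f=f_j$ into (\ref{eqn1}) with $\delta_l(v_i)=h_i\otimes v_i$ and $\delta_r^*(f_j)=f_j\otimes a_j$ produces the identity $\mu_j(h_i)\lambda_i(a_j)\,f_{h_i\cdot j}\otimes v_{a_j\cdot i}=f_j\otimes v_i$. The scalars $\mu_j(h_i)$ and $\lambda_i(a_j)$ are nonzero values of group characters; weak separability forces the homogeneous components of $V$ and $V^*$ to be one-dimensional, so this can only hold if $h_i\cdot j=j$, $a_j\cdot i=i$, and $\mu_j(h_i)\lambda_i(a_j)=1$. The YD compatibility of $\delta_r,\delta_l$ means that the action of $g\in G$ sends the index $i$ to the unique index whose right (resp.\ left) degree is $gg_ig^{-1}$ (resp.\ $gh_ig^{-1}$); since both families of degrees are pairwise distinct, $a_j\cdot i=i$ is equivalent to $[a_j,g_i]=1$, and $h_i\cdot j=j$ (read on $V^*$) is equivalent to $[h_i,a_j]=[h_i,b_j]=1$. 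Combined with the scalar identity, this gives the first character relation $\mu_j(h_i)=\lambda_i(a_j)^{-1}$ and three of the four commutator identities.

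An entirely parallel computation with (\ref{eqn2}), where $\delta_r(v_i)=v_i\otimes g_i$ and $\delta_l^*(f_j)=b_j\otimes f_j$, yields $b_j\cdot i=i$, $g_i\cdot j=j$, and $\mu_j(g_i)\lambda_i(b_j)=1$. The remaining commutator $[g_i,b_j]=1$ follows, as does the second character identity; note that $[g_i,a_j]=1$ is also recoverable directly from (\ref{eqn3}) via $v_if_j\otimes(a_jg_i-g_ia_j)=0$ together with the nonvanishing of $v_if_j$ in the PBW basis, which serves as a consistency check.

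For the symmetric case, without loss of generality we may choose $\lbrace v_i\rbrace$ and $\lbrace f_j\rbrace$ to be dual bases, $\langle f_j,v_i\rangle=\delta_{ij}$. Evaluating (\ref{symmetry}) on $(v_i,f_j)$ yields $\langle f_j,v_i\rangle a_j=\langle f_j,v_i\rangle h_i$, so $a_i=h_i$; the dual compatibility between $\delta_r$ and $\delta_l^*$ gives $b_i=g_i$. Consequently the set of all degrees reduces to $\lbrace g_i,h_i\rbrace_{i=1,\ldots,n}$, and the commutation relations of the first part imply $[g_i,g_j]=[g_i,h_j]=[h_i,h_j]=1$ for every pair, so the subgroup of $G$ generated by all degrees is abelian. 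The only delicate bookkeeping --- and the place where weak separability is used essentially --- is in translating the permutation identities $a_j\cdot i=i$, $h_i\cdot j=j$, etc.\ into genuine group-theoretic commutativity rather than a mere coincidence of eigenvalue, which is precisely what distinctness of the degrees makes possible.
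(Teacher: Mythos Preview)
Your argument follows essentially the same route as the paper's: both exploit (\ref{eqn1})--(\ref{eqn4}) on homogeneous basis elements. The paper reads $[g_i,a_j]=1$ and $[h_i,b_j]=1$ directly from (\ref{eqn3}) and (\ref{eqn4}), and then uses (\ref{eqn1}), (\ref{eqn2}) to obtain the fixed-index identities $h_i(j)=j$, $a_j(i)=i$, $g_i(j)=j$, $b_j(i)=i$ (and hence the character identities). You instead deduce all four commutator relations from those same fixed-index identities via YD-compatibility, using (\ref{eqn3}) only as a check. This is a cosmetic reorganisation, not a different method.

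One remark on the symmetric case. Your reading of (\ref{symmetry}) gives $a_i=h_i$ and $b_i=g_i$, whereas the paper's proof asserts $a_i=g_i^{-1}$, $b_i=h_i^{-1}$; your identities are what (\ref{symmetry}) literally says, and either set, when fed into the commutator relations of the first part, yields abelianity of the subgroup generated by all degrees. Your phrase ``without loss of generality we may choose $\{v_i\}$ and $\{f_j\}$ to be dual bases'' deserves a word of justification, since Proposition~\ref{primitiveprop} does not guarantee that the homogeneous bases are orthogonal. The point is that (\ref{symmetry}) itself forces the dual basis of $\{v_i\}$ to be $\delta_r^*$-homogeneous (of degree $h_i$) and, by the second symmetry condition, $\delta_l^*$-homogeneous (of degree $g_i$); weak separability then lets you reindex the $f_j$ so that they coincide with this dual basis up to scalars. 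With this observation in place your argument is complete.
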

\begin{proof}
The commutator relations follow by applying (\ref{eqn3}) and (\ref{eqn4}) to a pair of homogeneous basis elements of $V$ and $V^*$ with respect to $\delta_l, \delta_r^*$ (or $\delta_r, \delta_l^*$). Then it follows from (\ref{eqn1}) and (\ref{eqn2}) that $h_i(j)=j$, $a_j(i)=i$, $g_i(j)=j$ and $b_j(i)=i$ by the PBW theorem. This implies the relations (\ref{characteridentities}). In the symmetric case, $a_i=g_i^{-1}$ and $b_i=h_i^{-1}$ which forces the subgroup generated by all degrees to be abelian.
\end{proof}


\subsection{Classification in the Free Case of Weakly Separable Type}\label{freeclassification}

We are now in the position to classify all Hopf algebras $A$ with triangular decomposition of weakly separable type (cf. Definition \ref{indecprop}). This will enable us to view the Hopf algebras arising from this classification as analogues of multiparameter quantum groups in Section~\ref{multiparametersection}. We start by considering the case $A=T(V)\otimes kG\otimes T(V^*)$ which is referred to as the \emph{free} case. 

\begin{proposition}\label{pointedthm}
For the Hopf algebra $A$ with triangular decomposition of weakly separable type to be indecomposable as a coalgebra it is necessary that $G$ is generated by elements $k_1,\ldots,k_n$, $l_1,\ldots, l_n$ such that there exist generators $v_i$ of $V$ and $f_i$ of $V^*$ which are skew-primitive of the form
\begin{align}\label{coproductform}
\Delta(v_i)&=v_i\otimes k_i+1\otimes v_i, &\Delta(f_i)=f_i\otimes 1+l_i\otimes f_i,
\end{align}
with $[k_i,l_j]=1$ for all $i,j$. For the characters of the actions on the homogeneous components of $V$ and $V^*$ we require that
\begin{equation}\label{characterrequirement}
\mu_j(k_i)=\lambda_i(l_j)^{-1}.
\end{equation}
\end{proposition}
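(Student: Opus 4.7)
The plan is to start from Proposition~\ref{primitiveprop}, which provides bases $\{v_i\}\subset V$ and $\{f_i\}\subset V^*$ of skew-primitives with $\Delta(v_i)=v_i\otimes g_i+h_i\otimes v_i$ and $\Delta(f_i)=f_i\otimes a_i+b_i\otimes f_i$. My first move is to set $k_i:=g_ih_i^{-1}$ and $l_i:=b_ia_i^{-1}$ and renormalize the basis elements to $v_ih_i^{-1}$ and $f_ia_i^{-1}$ in $A$ (still referring to them as $v_i,f_i$ by abuse of notation, as these are the natural skew-primitive generators of $A$ once one allows multiplication by grouplikes). The direct computation $\Delta(v_ih_i^{-1})=\Delta(v_i)\Delta(h_i^{-1})=v_ih_i^{-1}\otimes k_i+1\otimes v_ih_i^{-1}$, and symmetrically for $f_ia_i^{-1}$, immediately yields the coproducts claimed in~\eqref{coproductform}.

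Next, to tie indecomposability to generation of $G$, I would invoke the link-quiver criterion of Section~\ref{indecomposability}: $A$ is indecomposable as a coalgebra iff $\Gamma_A$ is connected. Under the renormalized generators, $g\cdot v_i$ is $(gk_i,g)$-skew-primitive and so contributes an arrow $g\to gk_i$ in $\Gamma_A$; analogously $f_i\cdot g$ contributes $g\to l_ig$. Hence the connected component of $1$ contains $\langle k_1,\ldots,k_n,l_1,\ldots,l_n\rangle$. The hard part will be the converse inclusion: ruling out ``exotic'' skew-primitives in $A$ that might attach extra arrows with ratios outside this subgroup. For this I would use the PBW decomposition of the braided double to confine any skew-primitive in $A$ (modulo the trivial ones in $kG$) to the natural degree-$\pm 1$ components, which by weak separability are spanned precisely by translates of the $v_i$ and $f_i$. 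Granted this, the connected component of $1$ coincides with $\langle k_i,l_i\rangle$, and indecomposability forces equality with $G$.

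Finally, the remaining commutator and character relations should follow directly from Proposition~\ref{symmetricprop}. The cross-commutation relations $[g_i,a_j]=[g_i,b_j]=[h_i,a_j]=[h_i,b_j]=1$ give $[k_i,l_j]=[g_ih_i^{-1},b_ja_j^{-1}]=1$ at once. For the character identity, from $\mu_j(g_i)=\lambda_i(b_j)^{-1}$ and $\mu_j(h_i)=\lambda_i(a_j)^{-1}$ I compute $\mu_j(k_i)=\mu_j(g_i)\mu_j(h_i)^{-1}=\lambda_i(b_j)^{-1}\lambda_i(a_j)$, which indeed equals $\lambda_i(l_j)^{-1}=(\lambda_i(b_j)\lambda_i(a_j)^{-1})^{-1}$, confirming the required relation~\eqref{characterrequirement}.
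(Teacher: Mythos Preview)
Your overall strategy—renormalizing the skew-primitive generators, reading off the link-quiver, and then pulling the commutator and character identities from Proposition~\ref{symmetricprop}—is exactly the paper's approach, and your computations for~\eqref{coproductform}, for $[k_i,l_j]=1$, and for~\eqref{characterrequirement} are correct.

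The gap is in the step you flag as ``the hard part.'' Your proposed fix, using PBW to confine every non-trivial skew-primitive to natural degree $\pm 1$, is false in general. For instance, whenever $q_{ij}q_{ji}=1$ for some $i\neq j$ (with $q_{ij}=\lambda_j(k_i)$), the braided commutator $v_iv_j-q_{ij}v_jv_i$ is a genuine $(k_ik_j,1)$-skew-primitive of degree $2$; more generally the free braided Lie algebra inside $T(V)$ can contribute skew-primitives of arbitrarily high degree. So the confinement claim does not hold, and the argument as written does not close off the possibility of extra arrows in $\Gamma_A$.

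What rescues the conclusion—and what the paper's proof actually uses—is not a degree bound but a bound on the \emph{arrow ratios}. Any element of $T(V)\otimes kG\otimes T(V^*)$ that is $(g,h)$-skew-primitive, regardless of its tensor degree, is a combination of PBW monomials whose leading and trailing group-degrees differ by a word in the $k_i$ and $l_j^{-1}$; hence $h^{-1}g\in Z:=\langle k_i,l_j\rangle$. This is exactly what the paper's parenthetical ``(and possibly linear combinations of products of them, which would again be skew-primitive with degrees given by elements in $Z$)'' is asserting. With this observation in place, the connected component of $1$ in $\Gamma_A$ is precisely $Z$ (the paper phrases this via double cosets $ZgZ$, since one must allow both left and right multiplication by $k_i^{\pm1},l_j^{\pm1}$—you only wrote down two of the four arrow types), and indecomposability forces $G=Z$. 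Replace your degree-confinement step by this arrow-ratio argument and the proof goes through.
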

\begin{proof}
To determine when pointed Hopf algebras are indecomposable as coalgebras, consider the graph $\Gamma_A$ described in \ref{indecomposability}. Assume that $A$ has generators given as in Remark~\ref{primitivenotation}. We claim that the connected components of $\Gamma_A$ are in bijection with the double cosets of the subgroup
\[
Z:=\langle g_1^{-1}h_1,\ldots, g_n^{-1}h_n, a_1^{-1}b_1,\ldots, a_n^{-1}b_n\rangle
\]
in $G$ which partition $G$. Indeed, using that the elements $gv_i$ and $gf_i$ are skew-primitive of type $(gg_i,gh_i)$ and $(ga_i,gb_i)$, we find that the connected component of $g$ contains, for $i=1,\ldots, n$, of the strands
\[
\ldots \longrightarrow g(g_i^{-1}h_i)^{-2}\longrightarrow g(g_i^{-1}h_i)^{-1}\longrightarrow g \longrightarrow g(g_i^{-1}h_i)^1\longrightarrow g(g_i^{-1}h_i)^{2} \longrightarrow \ldots 
\]
for $i=1,\ldots, n$ and the same strands with $a_i^{-1}b_{i}$ instead of $g_i^{-1}h_i$ (and with $g$ multiplied on the right).
Moreover, as the elements $gv_i$, $gf_i$, $v_ig$, $f_ig$ (and possibly linear combinations of products of them, which would again be skew-primitive with degrees given by elements in $Z$) are the only skew-primitive elements in $A$, and thus give the only arrows in $\Gamma_A$, two elements $g$ and $h$ are in the same connected component if and only if $z_1gz_2=z_3hz_4$, for some $z_i\in Z$.
Thus, $A$ is indecomposable if and only if $G$ equals the connected component of $1$ in the graph $\Gamma_A$, hence if $G=Z$ which is the group generated by the elements $k_i:=h_i^{-1}g_i$, $l_i:=a_i^{-1}b_i$ for $i=1,\ldots, n$. Thus, in order to obtain indecomposability, the coproducts are of the form as stated in (\ref{coproductform}). This is achieved by replacing the generators $v_i$ by $v_ih_i^{-1}$ and $f_i$ by $a_i^{-1}f_i$. The remaining statements follow directly from Proposition \ref{symmetricprop}.
\end{proof}

\begin{theorem}\label{mainclassificationthm}
For an indecomposable Hopf algebra $A$ of weakly separable type as in Proposition \ref{pointedthm}, the commutator relations (\ref{commrel}) are of the form
\begin{align}\label{commrel2}
[f_i,v_j]&=\gamma_{ij}(k_j-l_i),&\forall 1\leq i, j\leq n,
\end{align}
where $\gamma_{ij}$ are scalars in $k$ such that $\gamma_{ij}=0$ whenever $\lambda_i\neq \lambda_j$ in which case also $\langle f_i,v_j\rangle=0$, or if either of $l_i$ of $k_j$ are not central.
Conversely, any choice of such scalars gives a pointed Hopf algebra of this form.
\end{theorem}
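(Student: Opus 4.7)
The approach is to apply the coproduct $\Delta$ to the commutator $[f_i,v_j]\in kG$ and to leverage the restriction that elements of $kG$ have diagonal coproducts. I would first expand $\Delta(f_iv_j-v_jf_i)$ using the skew-primitive coproducts from Proposition \ref{pointedthm}, then commute the resulting factors into standard order using the bosonization rules $f_ik_j=\mu_i(k_j)k_jf_i$ and $l_iv_j=\lambda_j(l_i)v_jl_i$. The key simplification is that the mixed cross-term of shape $v_jl_i\otimes(\lambda_j(l_i)f_ik_j-k_jf_i)$ vanishes by the character identity $\mu_i(k_j)\lambda_j(l_i)=1$ from (\ref{characterrequirement}); the symmetric cross-term is absent because the coproducts in (\ref{coproductform}) have been normalized so that $v_j$ is left-coinvariant and $f_i$ is right-coinvariant. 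What remains is
\begin{align*}
\Delta([f_i,v_j])=[f_i,v_j]\otimes k_j+l_i\otimes[f_i,v_j].
\end{align*}

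The next step is to read off what this imposes on $[f_i,v_j]\in kG$. The equation characterizes $[f_i,v_j]$ as an $(l_i,k_j)$-skew-primitive element inside the group algebra, and a coefficient comparison against the diagonal coproduct of grouplikes shows that any such element is a scalar multiple of $k_j-l_i$. Hence $[f_i,v_j]=\gamma_{ij}(k_j-l_i)$ for some $\gamma_{ij}\in k$. To pin down when $\gamma_{ij}$ can be non-zero, I would conjugate this identity by an arbitrary $g\in G$: the bosonization rules yield $gf_ig^{-1}=\mu_i(g)^{-1}f_i$ and $gv_jg^{-1}=\lambda_j(g)v_j$, giving
\begin{align*}
\mu_i(g)^{-1}\lambda_j(g)\gamma_{ij}(k_j-l_i)=\gamma_{ij}(gk_jg^{-1}-gl_ig^{-1}).
\end{align*}
Because $gk_jg^{-1}$ and $gl_ig^{-1}$ are single group elements while the left side is supported on $\{k_j,l_i\}$, when $k_j\neq l_i$ this forces both $k_j$ and $l_i$ to be central in $G$, and in that case matching coefficients forces $\mu_i=\lambda_j$ as characters. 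Combined with $G$-invariance of the evaluation pairing, which yields $(\mu_i(g)-\lambda_j(g))\langle f_i,v_j\rangle=0$, and with the identification $\mu_i=\lambda_i$ obtained by choosing $\{f_i\}$ dual to $\{v_i\}$ whenever the characters permit, both conditions collapse to the theorem's statement that $\lambda_i\neq\lambda_j$ forces $\gamma_{ij}=0=\langle f_i,v_j\rangle$.

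For the converse, given any choice of scalars $\gamma_{ij}$ satisfying the constraints, I would define the quasi-coaction $\delta(v_j)=\sum_i\gamma_{ij}(k_j-l_i)\otimes v_i$ and its dual counterpart, then verify that the YD-compatibility (\ref{ydcond}) of Theorem \ref{bbthm} follows from the centrality of the $k_j,l_i$ appearing together with the character identities, while the bialgebra compatibility conditions (\ref{eqn3})--(\ref{eqn5}) reduce to the same cancellations carried out in the forward direction. I expect the main obstacle to be the case analysis in the centrality step of the forward direction: excluding pathological scenarios where $g$ might permute $k_j$ with $l_i$ under conjugation (producing a spurious solution with $\mu_i(g)^{-1}\lambda_j(g)=-1$) requires varying $g$ over all of $G$ and carefully combining the resulting constraints, and it is this step that pins down exactly which $\gamma_{ij}$ are permitted.
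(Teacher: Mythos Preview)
Your approach is correct and is essentially the same as the paper's: the paper derives the form $[f_i,v_j]=\gamma_{ij}(k_j-l_i)$ by imposing condition~(\ref{eqn5}) on the quasi-coaction, which is exactly your direct computation that $\Delta([f_i,v_j])$ forces this element to be $(l_i,k_j)$-skew-primitive in $kG$; and the paper's verification of YD-compatibility via \cite[Theorem~A]{BB} is precisely your conjugation argument. The one imprecision is in your conjugation step: in the weakly separable setting the $G$-action may permute the homogeneous basis vectors, so $gv_jg^{-1}=\lambda_j(g)v_{g(j)}$ and correspondingly $gk_jg^{-1}=k_{g(j)}$, not $\lambda_j(g)v_j$ as you write; the ``pathological scenarios'' you should worry about are these index permutations rather than a swap of $k_j$ with $l_i$, though the argument still closes (the paper's proof glosses over the same point) because the centrality conclusion is exactly what fixes $g(j)=j$.
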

\begin{proof}
With the work done in Proposition \ref{pointedthm}, it remains to verify that the form of the commutator relation (\ref{commrel}) is as stated. Recall that in \cite[3.1]{BB}, the commutator relation is given by means of a quasi-coaction. That is a morphism $\delta\colon V\to kG\otimes V$ satisfying (\ref{ydcond}) and (\ref{commrel}). Such a morphism has the general form
\begin{align}
\delta(v_j)=v_j^{[-1]}\otimes v_j^{[0]}&=\sum_{k,g}{\alpha_{k,g}^j g\otimes v_k},&\alpha_{k,g}^i\in k,
\end{align}
on the basis elements from (\ref{coproductform}). Then (\ref{eqn5}), which is required for $A$ to be a bialgebra, rewrites as
\begin{align*}
\sum_{k,g}{\alpha_{k,g}^j (g\otimes k_j+l_i\otimes g)\langle f_i,v_k \rangle}
&=\sum_{k,g}{\alpha_{k,g}^j g\otimes g\langle f_i,v_k \rangle}, &\forall i,j.
\end{align*}
For each $i$, there exists $k$ such that $\langle f_i,v_k\rangle \neq 0$. For given $i$, we denote the set of indices such that $\langle f_i,v_k\rangle \neq 0$ by $I_i$. For such $k\in I_i$, we find that
 $\alpha_{k,g}^j=0$ for $g\neq k_j, l_i$, and $\alpha_{k,k_j}^j=-\alpha_{k,l_i}^j$. Thus, we obtain that $\delta$ is of the form
\begin{equation}\label{qcoactionform}
\delta(v_j)=v_j^{[-1]}\otimes v_j^{[0]}=\sum_{i=1}^n{\gamma_{ij} (k_j-l_i)\otimes v'_i},
\end{equation}
where $\gamma_{ij}=\sum_{k\in I_i}{\alpha_{k,k_j}^j {\langle f_i,v_k\rangle\abs{I_i}}}$ and $\lbrace v'_i\rbrace$ is the dual basis of $V$ to $\lbrace f_i\rbrace$.
Conversely, given arbitrary scalars $\gamma_{ij}$ for $i,j=1,\ldots,n$, we can define a quasi-coaction by the same formula (\ref{qcoactionform}). Then $\delta$ is YD-compatible with the given action of $G$ on $V$ if and only if (cf. condition (A) in \cite[Theorem~A]{BB})
\begin{align*}
\gamma_{ij}\mu_i(g)(gk_j-gl_{i})=&g[f_i\triangleleft g,v_j]\stackrel{(\text{A})}{=}[f_i, g\triangleright v_j]g=\gamma_{ij}\lambda_j(g)(k_jg-l_ig).
\end{align*}
This implies $\lambda_j=\mu_i$ whenever $\gamma_{ij}\neq 0$. Further, if $\gamma_{ij}\neq 0$ we need $l_i, k_j\in Z(G)$. These two requirement ensure that $\delta$ is YD-compatible.

Further, by duality of the action, if $\langle f_i,v_j \rangle\neq 0$ then $\lambda_i=\mu_j$. As for given $i=1,\ldots, n$, $\langle f_i,v_j \rangle\neq 0$ for some $j$ we have that $\lambda_i=\mu_j$ for at least some $j$, and vice versa. Hence, the set of characters and dual characters are in bijection. We can change the numbering and assume without loss of generality (recall that we are in the weakly separable case) to obtain 
\begin{equation}
\lambda_i=\mu_i.
\end{equation}
From now on, we will hence only use the notation $\lambda_i$.
\end{proof}

\begin{example}
The most degenerate case, where $\gamma_{ij}=0$, gives the Hopf algebra $(T(V)\otimes T(V^*))\rtimes kG$ where the tensor algebras are again computed in the category of YD-modules over $kG$.
\end{example}

\begin{remark}
At this point, a comparison to \cite[2.4]{AS2} and \cite[4.3]{AS3} seems appropriate. The condition (\ref{commrel2}) is equivalent to the so-called \emph{linking relation} (\ref{asrel3}) after a change of generators $f_i\leftrightarrow l_i^{-1}f_i$, since in the form of Definition \ref{asform} all generators have coproducts $\delta(v_i)=v_i\otimes 1+g_i\otimes v_i$. Such a change of generators causes the commutators $\ad=[~,~]$ to become braided commutators $\underline{\ad}= \ide_{V^{\otimes 2}}-\Psi$. The scalars $\lambda_{ij}$ satisfy the condition (d) in \ref{classificationsurvey}, where for the characters $\chi_i\chi_j\neq \varepsilon$ implies $\lambda_{ij}=0$. This is the analogue of our condition $\lambda_i\neq \lambda_j$ implying $\gamma_{ij}=0$.

The linking relations also appear in the quantum group characterization of \cite[Theorem 4.3]{AS3}. Hence we can conclude that the classification in this section gives Hopf algebras with similar relations as appearing in the work of Andruskiewitsch and Schneider. The outcome here is more restrictive as in our setting relations of the form (\ref{asrel4}) cannot involve non-trivial elements in $kG$, and we also have a symmetry in the set $\chi$ of connected components due to the triangular decomposition.
\end{remark}

The situation where $\lbrace v_i\rbrace$ and $\lbrace f_i \rbrace$ are orthogonal bases deserves particular attention. In this case, the scalars $\gamma_{ij}=0$ for $i\neq j$. The following concept of separability ensure this.

\begin{definition}
Let $A$ have a triangular decomposition of weakly separable type over a group $G$. If the characters $\lambda_1, \ldots, \lambda_n$ are distinct for different indices, we will speak of a triangular decomposition of \emph{separable type}.

If $A$ is of the form as in Theorem \ref{mainclassificationthm}, we say that $A$  is \emph{non-degenerate} if $\gamma_{ii}\neq 0$ for all $i=1,\ldots,n$ (this implies $k_i\neq l_i$).
\end{definition}

Note that both definitions --- separability and non-degeneracy --- cause the group $G$ to be abelian, and hence the braidings on $V$ and $V^*$ to be of diagonal type.
Assuming non-degeneracy, we can adapt the terminology of \cite[5.5]{BB} that the braided doubles in this case come from \emph{mixed} YD-structures. A mixed YD-structure is a quasi-coaction $\delta$ that is a weighted sum $\sum{t_i\delta_i}$, where $\delta_i$ are YD-modules compatible with the same action, and $t_i$ are generic scalars. The quasi-YD-module in the theorem is the sum $\delta=\delta_r-(\delta_l^*)^*$, where $(\delta_l^*)^*$ is the YD-module given by $v_j\mapsto l_j\otimes v_j$, which is dual to $\delta_l^*$. We will see that in this case all the Hopf algebras arising are certain \emph{asymmetric} braided Drinfeld doubles (as defined in \ref{asymdrin}). In the symmetric case, these algebras are in fact braided Drinfeld doubles. In particular, their appropriately defined module categories (resembling the Drinfeld center) are braided.

\subsection{Interpretation as Asymmetric Braided Drinfeld Doubles}\label{quotientsection}

Assume in this section that $A$ is non-degenerate of indecomposable separable type over $G$. So far, we have only classified \emph{free} braided doubles over $kG$. That is, as a $k$-vector space $A\cong T(V)\otimes kG\otimes T(V^*)$ via the multiplication map. To capture examples such as quantum groups, it is necessary to consider quotients of $A$ by triangular ideals $J=( I,I^*)$ such that $A/J\cong T(V)/I\otimes kG\otimes T(V^*)/{I^*}$ is still a Hopf algebra (and thus pointed). Here $I\triangleleft T^{>1}(V)$ and $I^*\triangleleft T^{>1}(V^*)$ are ideals and also coideals, and $J\in \cI_{\Delta}(A)$. We will now refine our considerations from Section \ref{triangularhopfsect} to find for what ideals $I$ and $I^*$ this is the case. We will use the notation
\begin{align}
q_{ij}:=\lambda_j(k_i).
\end{align}
Then, by (\ref{characterrequirement}), we have that $\lambda_j(l_i)=q_{ji}^{-1}$, and the matrix $q=(q_{ij})$ describes the braiding on $V$ fully, i.e. it is of \emph{diagonal type}.

The collection of triangular Hopf ideals $\cI_\Delta(A)$ introduced in Section \ref{triangularhopfsect} can be described more concretely in the separable non-degenerate case: The ideals in $\cI_\Delta(A)$ are of the form $J=I\otimes kG\otimes T(V^*)+T(V)\otimes kG\otimes I^*$ where $I$ is an ideal in the collection $\cI_{(V,\delta_r)}$ for $V$ with the right coaction given by $\delta_r$, and $I^*$ is in $\cI_{(V^*,\delta_l^*)}$ for the left dual coaction $\delta_l^*$ on $V^*$. This follows using \cite[Proposition 5.10]{BB} and the description of triangular Hopf ideals in Lemma \ref{ideallemma}. We use that by (\ref{characterrequirement}) the braiding $\Psi_r$ coming from $\delta_r$ and $\Psi_l$ from $(\delta_l^*)^*$ on $V$ are given by
\begin{align}
\Psi_r(v_i\otimes v_j)&=q_{ij}v_j\otimes v_i,&&\Psi_l(v_i\otimes v_j)=q_{ji}^{-1}v_j\otimes v_i,.
\end{align}
That is $\Psi_l=\Psi_r^{-1}$, the inverse braiding. Thus, $I^*$ is just the dual $k$-vector space to $I$.

\begin{example}
In the quantum groups $A=\Ug$, the braiding satisfies the symmetry $q_{ij}=q^{i\cdot j}=q^{j\cdot i}=q_{ji}$ as the Cartan datum is symmetric. This implies that the relations in $I$ are symmetric under reversing the order of tensors $v_1\otimes \ldots\otimes v_n\leftrightarrow v_n\otimes \ldots\otimes v_1$. This can be verified explicitly by observing that in $\Ug$ the ideal $I$ is generated by $q$-Serre relations, which carry such a symmetry.
\end{example}

\begin{theorem}\label{drinfeldtheorem}
Let $A$ be an indecomposable bialgebra with triangular decomposition of separable non-degenerate type over $G$. Then $A$ is an asymmetric braided Drinfeld double.
\end{theorem}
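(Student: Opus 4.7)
The plan is to identify $A$ with an asymmetric braided Drinfeld double $\aDrin_{kG}(T(V^*)^{\cop}/I^*,T(V)^{\oop}/I)$, where $I$ and $I^*$ are the positive and negative components of the triangular Hopf ideal defining $A$. The argument has two layers: first set up the data in the free case, then quotient by the ideals.

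First, I would read off left $kG$-coactions on the generators $v_i,f_i$ from the skew-primitive coproducts (\ref{primitivecoprod}), matching them against the coproduct template of Lemma~\ref{asymmetricdouble}. Since $\Delta(v_i)=v_i\otimes k_i+1\otimes v_i$ corresponds to $\Delta(v)=v^{(0)}\otimes S^{-1}(v^{(-1)})+1\otimes v$, I put $\delta(v_i):=k_i^{-1}\otimes v_i$; similarly $\delta(f_i):=l_i\otimes f_i$. Non-degeneracy forces $G$ to be abelian (Theorem~\ref{mainclassificationthm}), so the YD-compatibility of these coactions with the given left actions is automatic. Consequently $T(V)^{\oop}$ and $T(V^*)^{\cop}$ are braided Hopf algebras in $\leftexpsub{kG}{kG}{\mathcal{YD}}$.

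Next, I would introduce the $kG$-equivariant pairing $\langle\,,\,\rangle\colon T(V^*)^{\cop}\otimes T(V)^{\oop}\to k$ defined on generators by $\langle f_i,v_j\rangle:=\gamma_{ii}\delta_{ij}$ (this rescaling is legitimate since $\gamma_{ii}\neq 0$ by non-degeneracy) and extended multiplicatively. Equivariance holds because separability forces $\lambda_i=\mu_j$ only when $i=j$. Then I would verify the weak duality condition (\ref{weaklyduallypairedcond}): on generators it reduces to an identity among the characters $\lambda_i(l_j)$ and $\lambda_i(k_j)$, which follows from (\ref{characteridentities}) combined with separability $\mu_i=\lambda_i$. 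Applying Lemma~\ref{asymmetricdouble}, the resulting asymmetric braided Drinfeld double has coproducts exactly (\ref{primitivecoprod}) and cross relations
\[
[f_i,v_j]=S^{-1}(v_j^{(-1)})\langle f_i,v_j\rangle-f_i^{(-1)}\langle f_i,v_j\rangle=(k_j-l_i)\gamma_{ii}\delta_{ij},
\]
which agree with (\ref{commrel2}) since separability forces $\gamma_{ij}=\gamma_{ii}\delta_{ij}$. This identifies the free version of $A$ with $\aDrin_{kG}(T(V^*)^{\cop},T(V)^{\oop})$.

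To pass to the quotient, I would observe that the triangular Hopf ideal $J=I\otimes kG\otimes T(V^*)+T(V)\otimes kG\otimes I^*$ has $I$ a coideal and YD-submodule of $T(V)$ for the coaction $\delta_r$ (and analogously for $I^*$), hence a braided biideal in $T(V)^{\oop}$. The triangular ideal condition (\ref{idealcond}) ensures that the pairing descends to the quotients, and that the descended quotients remain weakly dually paired braided Hopf algebras in $\leftexpsub{kG}{kG}{\mathcal{YD}}$. Their asymmetric braided Drinfeld double is then canonically isomorphic to $A$ by construction; this descent is structurally identical to the proof of Lemma~\ref{ideallemma}.

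The main obstacle is the bookkeeping for the dual action on $V^*$ (several competing conventions coming from the right action in $A$ versus the left YD-action in the double via the antipode) and the verification that (\ref{weaklyduallypairedcond}) is genuinely satisfied rather than merely the apparently stronger symmetric version. Everything else runs through Lemma~\ref{asymmetricdouble} and the duality between ideals in $\cI_{(V,\delta_r)}$ (respectively $\cI_{(V^*,\delta_l^*)}$) and biideals in the braided Hopf structures of the tensor algebras.
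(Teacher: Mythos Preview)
Your proposal is correct and follows essentially the same approach as the paper's proof: identify the YD-coactions from the skew-primitive coproducts, take the pairing $\langle f_i,v_j\rangle=\gamma_{ii}\delta_{ij}$, verify weak duality via the character identity (\ref{characterrequirement}), and observe that the resulting cross relations of Lemma~\ref{asymmetricdouble} reproduce (\ref{commrel2}) before passing to quotients. Your version is in fact more explicit than the paper's (which is quite terse), particularly in spelling out the left coaction $\delta(v_i)=k_i^{-1}\otimes v_i$ and the descent to quotients by the triangular Hopf ideal.
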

That is, all quotients by triangular Hopf ideals $J\in \cI_\Delta(A)$ of algebras $A$ of separable non-degenerate type occurring in the classification of Theorem \ref{mainclassificationthm} are asymmetric braided Drinfeld doubles. If $J$ is maximal in $\cI_\Delta(A)$, then $A/J\cong U_{kG}(V, V^*)$.

\begin{proof}
Recall that every Hopf algebra with triangular decomposition is the quotient of a free braided double by a triangular Hopf ideal. 
We saw that in the free separable case the commutator relations are of the form $[f_i,v_j]=\delta_{ij}\gamma_{ii}(k_i-l_j)$. This is precisely the form of the asymmetric braided Drinfeld double of $V$ with right YD-module structure given by the right grading, and $V^*$ with left YD-module structure given by the left dual grading. The pairing is given by $\langle f_i,v_j\rangle=\delta_{ij}\gamma_{ii}$ here. We have to check that the braided Hopf algebras $T(V)$ and $T(V^*)$ of YD-modules over $G$ are dually paired when viewed in the category of left $kG$-modules. This however follows from condition (\ref{characterrequirement}). Taking the maximal quotient by a triangular ideal (or the left and right radical of the pairing) gives the asymmetric braided Drinfeld double $U_{kG}(V, V^*)$.
\end{proof}

If some of the parameters $\gamma_{ii}$ are zero, then the pointed Hopf algebras obtained are not asymmetric braided Drinfeld double any more (in the sense of Definition \ref{asymmetricdrinfelddef}).

\subsection{Recovering a Lie Algebra}\label{liealgebrasection}

We assume that $\operatorname{char} k=0$ in this section and study Hopf algebras with triangular decomposition of separable type which are of the form $U_{kG}(V, V^*)$ (see Theorem \ref{drinfeldtheorem}). The aim is to set the characters $\lambda_i$ and the group elements $k_i$, $l_i$ equal to 1. This way, we want to recover a Lie algebra $\fr{g}$ for any of the indecomposable pointed Hopf algebras of the form $U_{kG}(V, V^*)$, relating back to the question asked in the introduction of finding quantum groups for a given Lie algebra. The tool available for this is the Milnor--Moore theorem from \cite{MM} (see also \cite[Theorem 5.6.5]{Mon}) which shows that any cocommutative connected Hopf algebras is of the form $U(\fr{g})$ for a (possibly infinite-dimensional) Lie algebra $\fr{g}$.

There are technical problems with this naive approach. To set the elements $q_{ij}$ --- which will be replaced by formal parameters --- equal to one, we need to give an appropriate integral form to avoid that the modules collapse to zero. This rules out examples like e.g. $k[x]/(x^n)$ (and, more generally, the small quantum groups) which are braided Hopf algebras in the category of YD-modules over $k\mZ$, as here a generator of the group acts by a primitive $n$th root of unity $q$ on $x$, and $\mZ[q]\subset k$ is a cyclotomic ring.

As a first step, we introduce appropriate integral forms of $U_{kG}(V, V^*)$, for which we need the square roots of $q_{ij}$. 
We consider the subring $Z:=\mZ[q_{ij}^{\pm 1/2}]_{i,j}\subset k$ adjoining all square roots of the numbers $q_{ij}$ and their inverses. These will now be treated as formal parameters with certain relations between them, coming from the relations we have among them in $k$.

\begin{remark}
In this section, we assume that the ideal $\langle q_{ij}^{\pm 1/2}-1 \mid i,j=1,\ldots,n\rangle $ in $Z$ is a proper ideal, and hence $p\colon Z\to \mZ$, $q_{ij}^{\pm 1/2}\mapsto 1$ is a well-defined morphism of rings.
\end{remark}

This assumption is crucial in the formal limiting process. It, for example, prevents examples in which $q^n+q^{n-1}+\ldots+q+1=0$ as in cyclotomic rings. 

To produce an integral form, we replace a given YD-module $V$ over $kG$ of separable type as in the previous sections by a YD-module over $ZG$. For this, we can choose a $G$-homogeneous basis $v_1,\ldots,v_n$ and a homogeoneous dual basis $f_i,\ldots,f_n$ such that (possibly after rescaling)
\begin{align}
\langle f_i, v_j\rangle&=\frac{1}{q_{ii}^{1/2}-q_{ii}^{-1/2}}\delta_{ij}, &\forall i,j.
\end{align}
An important observation is that the Woronowicz symmetrizers, which are used to compute the Nichols ideal $I_{\op{max}}(V)$, have coefficients in $Z$. Hence their kernels will be $Z$-modules. That is, for $V^{\op{int}}$ defined as $Z\langle v_1, \ldots, v_n\rangle$, which is a YD-module over the group ring $ZG$, the Woronowicz symmetrizer $\Wor^n_{\op{int}}\Psi$ is a $Z$-linear map $V^{\op{int}\otimes n}\to V^{\op{int}\otimes n}$. Hence $I_{\op{max}}(V^{\op{int}}):=\ker \Wor_{\op{int}}\Psi$ is an ideal in $T(V^{\op{int}})$, the tensor algebra over $Z$.

In order to provide an integral form of $U_{kG}(V, V^*)$, we change the presentation by introducing new commuting generators, namely $[f_i,v_i]=:t_i$.
One verifies that the following commutator relations hold over $k$, as we are given the relation $t_i=\tfrac{1}{q_{ii}^{1/2}-q_{ii}^{-1/2}}(k_i-l_i)$ when working over the field:
\begin{align}
[f_i,t_j]&=\delta_{i,j}(q_{ii}^{1/2}k_if_i+q_{ii}^{-1/2}l_if_i),\label{tirel1}\\
[v_i,t_j]&=-\delta_{i,j}(q_{ii}^{-1/2}k_iv_i+q_{ii}^{1/2}l_iv_i).\label{tirel2}
\end{align}

\begin{definition}
The \emph{integral form} $U_{ZG}(V^{\op{int}}, V^{\op{int}*})$ of $U_{kG}(V, V^*)$ is defined as the graded Hopf algebra over the ring $Z$ generated by $v_1,\ldots, v_n$, of degree 1, $f_1,\ldots, f_n$ of degree $-1$, and the group elements $k_1, \ldots, k_n, l_1, \ldots, l_n\in G$, and additional elements  $t_1, \ldots, t_n$ of degree 0, subject to the relations of $I_{\op{max}}(V^{\op{int}})$ and $I_{\op{max}}^*(V^{\op{int}})$, bosonization relations
\begin{align}
gv_i&=(g\triangleright v_i)g,&f_ig&=g(f_i\triangleleft g),
\end{align}
as well as the relations (\ref{tirel1}), (\ref{tirel2}) and
\begin{align}
gv_i&=(g\triangleright v_i)g,\qquad f_ig=g(f_i\triangleleft g),\label{intrel1}\\
q_{ii}^{1/2}(k_i-l_i)&=(q_{ii}-1)t_i,\label{intrel2}\\
[f_i, v_j]&=\delta_{i,j}t_i,\label{intrel3}\\
[t_i,t_j]&=0.
\end{align}
The coproducts are given as before on the generators $f_i,v_i,k_i,l_i$ and $\Delta(t_i)=t_i\otimes k_i+l_i\otimes t_i$.
\end{definition}

Note that as $A=U_{ZG}(V^{\op{int}}, V^{\op{int}*})$ is a Hopf algebra over the commutative ring $Z$, the coproduct is a map $A\to A\otimes_Z A$. For the quantum groups $U_q(\fr{g})$ at generic parameter, the integral form in this case is the so-called \emph{non-restricted} integral form (see e.g. \cite[9.2]{CP}) which goes back to De Concini--Kac \cite{DK}. To set the parameters equal to one, and to consider extensions of Hopf algebras to fields, we use the following Lemma:

\begin{lemma}\label{hopflemma2}
Let $\phi \colon R\to S$ be a morphism of commutative algebras. We denote the category of Hopf algebras over $R$ by $\mathbf{Hopf}_{R}$. Then base change along $\phi$ induces a functor
\begin{align*}
\mathbf{Hopf}_{\phi}\colon \mathbf{Hopf}_{R}&\longrightarrow \mathbf{Hopf}_{S},&A&\longmapsto A\otimes_RS.
\end{align*}
\end{lemma}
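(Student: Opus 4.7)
The plan is to observe that extension of scalars is a strong symmetric monoidal functor and that Hopf algebras are defined by diagrams in a symmetric monoidal category, so the structure transports automatically. Concretely, let $F := -\otimes_R S \colon \mathbf{Mod}_R \to \mathbf{Mod}_S$. For any pair of $R$-modules $M$, $N$ there is a canonical isomorphism of $S$-modules
\[
\tau_{M,N} \colon (M \otimes_R S) \otimes_S (N \otimes_R S) \stackrel{\sim}{\longrightarrow} (M \otimes_R N) \otimes_R S,
\]
natural and coherent with the symmetry, unitality, and associativity of the two tensor products. Together with the unit isomorphism $R \otimes_R S \cong S$, this exhibits $F$ as strong symmetric monoidal.

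First, I would define the Hopf algebra structure on $F(A) = A \otimes_R S$ by transporting all the structure maps of $A$ through $F$ and composing with $\tau$. Explicitly, the multiplication is $(m_A \otimes_R \ide_S)\circ \tau_{A,A}$, the unit is $1 \otimes_R \ide_S$, the comultiplication is $\tau_{A,A}^{-1}\circ(\Delta_A \otimes_R \ide_S)$, the counit is $(\varepsilon_A \otimes_R \ide_S)$ followed by $R \otimes_R S \cong S$, and the antipode is $S_A \otimes_R \ide_S$. Because $F$ is strong symmetric monoidal, each bialgebra and antipode axiom for $A$ in $\mathbf{Mod}_R$ translates verbatim to the corresponding axiom for $F(A)$ in $\mathbf{Mod}_S$; this is the content of the general fact that strong monoidal functors preserve algebra, coalgebra, bialgebra and Hopf algebra objects.

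Second, I would verify functoriality on morphisms. If $f \colon A \to B$ is a morphism of $R$-Hopf algebras, then $F(f) = f \otimes_R \ide_S$ is an $S$-linear map whose compatibility with each of the structure maps above follows from the naturality of $\tau$ together with the corresponding compatibility of $f$. Identities and composition are preserved because $F$ is functorial at the level of $R$-modules.

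The only genuine point of care is the coherence with the isomorphism $\tau$, in particular the interplay between the $R$-bilinearity of $\Delta_A$ and the $S$-bilinearity required of $\Delta_{F(A)}$; but this is bookkeeping and is encoded once and for all in the statement that $F$ is strong symmetric monoidal. No additional hypotheses on $\phi$, $R$, or $S$ beyond commutativity are required, and the construction commutes with the forgetful functor to algebras and to coalgebras, which is all that is needed for the applications in Section~\ref{liealgebrasection}.
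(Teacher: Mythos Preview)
Your proof is correct and follows essentially the same approach as the paper: both hinge on the canonical isomorphism $(A\otimes_R S)\otimes_S(A\otimes_R S)\cong (A\otimes_R A)\otimes_R S$ to transport the structure maps, with the paper simply stating this and declaring the verification ``easy to check'' while you recast it in the language of strong symmetric monoidal functors and spell out functoriality on morphisms. Your version is more detailed and conceptually cleaner, but the underlying argument is the same.
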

\begin{proof}
Given a Hopf algebra $A$ which is an $R$-algebra, i.e. there is a morphism $R\to A$, we induce the multiplication and comultiplication on $A\otimes_RS$ using the isomorphism
\[
(A\otimes_RS)\otimes_S(A\otimes_RS)\cong (A\otimes_R A)\otimes_RS.
\]
It is easy to check that the Hopf algebra axioms are preserved under base change.
\end{proof}

\begin{proposition}
There is an isomorphism of graded Hopf algebras
\[
U_{ZG}(V^{\op{int}}, V^{\op{int}*})\otimes_Zk\stackrel{\sim}{\longrightarrow} U_{kG}(V, V^*).
\]
\end{proposition}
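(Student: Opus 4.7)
The plan is to exhibit an explicit Hopf algebra morphism on generators via the universal property defining $U_{ZG}(V^{\op{int}}, V^{\op{int}*})$, and then verify bijectivity using the PBW decomposition from Lemma \ref{ideallemma}.

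First I would define a morphism $\Phi_0 \colon U_{ZG}(V^{\op{int}}, V^{\op{int}*}) \to U_{kG}(V, V^*)$ on generators by sending $v_i, f_i, k_i, l_i$ to the elements of the same name, and $t_i$ to $(k_i-l_i)/(q_{ii}^{1/2}-q_{ii}^{-1/2})$, which is meaningful since non-degeneracy forces $q_{ii}\neq 1$. Verifying that each of the defining relations is satisfied in the image is routine: relation (\ref{intrel2}) reduces to the identity $q_{ii}-1 = q_{ii}^{1/2}(q_{ii}^{1/2}-q_{ii}^{-1/2})$; relation (\ref{intrel3}) becomes the commutator relation (\ref{commrel2}) with $\gamma_{ii}=1/(q_{ii}^{1/2}-q_{ii}^{-1/2})$ from Theorem \ref{mainclassificationthm}; and relations (\ref{tirel1})--(\ref{tirel2}) follow from direct commutator computations using the bosonization relations (\ref{intrel1}). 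The Nichols relations $I_{\op{max}}(V^{\op{int}})$ and $I_{\op{max}}(V^{\op{int}*})$ map into the corresponding $k$-ideals because the Woronowicz symmetrizer is manifestly $Z$-linear. Compatibility of coproducts on generators is automatic from the definitions, so $\Phi_0$ is a graded Hopf algebra morphism of $Z$-algebras, and by Lemma \ref{hopflemma2} it extends to a $k$-linear graded Hopf algebra morphism $\Phi \colon U_{ZG}(V^{\op{int}}, V^{\op{int}*})\otimes_Z k \to U_{kG}(V, V^*)$.

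Surjectivity of $\Phi$ is immediate as its image contains the generating set $\{v_i, f_i, k_i, l_i\}$. For injectivity, I would observe that over $k$ the auxiliary generators $t_i$ become redundant: relation (\ref{intrel2}) now uniquely solves $t_i = (k_i-l_i)/(q_{ii}^{1/2}-q_{ii}^{-1/2})$, so $U_{ZG}(V^{\op{int}}, V^{\op{int}*})\otimes_Z k$ is already generated by $v_i, f_i$ and the group-likes subject only to bosonization, Nichols, and commutator relations. Since $Z \hookrightarrow k$ realizes $k$ as a flat $Z$-module, the Nichols base change $\cB(V^{\op{int}})\otimes_Z k \cong \cB(V)$ holds, and similarly for the dual. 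Applying Lemma \ref{ideallemma} to both sides then shows they admit isomorphic PBW decompositions $\cB(V)\otimes kG\otimes \cB(V^*)$, and $\Phi$ respects this factorization by construction, hence is bijective.

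The main technical obstacle is the base-change identification $I_{\op{max}}(V^{\op{int}}) \otimes_Z k = I_{\op{max}}(V)$ for the maximal Nichols ideals. This rests on two inputs: the Woronowicz symmetrizer $\Wor^n\Psi$ is a $Z$-linear operator, visible from its explicit coefficients in $Z$; and $k$ is flat over $Z$, which follows since $Z$ injects into the field $k$ and so $k$ is obtained from $Z$ by a localization followed by a field extension, each a flat operation. The assumption that $q_{ij}^{\pm 1/2} - 1$ generates a proper ideal in $Z$ guarantees that all necessary denominators remain non-zero, and the resulting identification of the Nichols algebras after base change closes the argument.
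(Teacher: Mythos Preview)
Your proposal is correct and follows essentially the same approach as the paper: both arguments reduce the extra generators $t_i$ by dividing by $q_{ii}-1$ once over $k$, and both identify $I_{\op{max}}(V^{\op{int}})\otimes_Z k$ with $I_{\op{max}}(V)$ via flatness of $k$ over $Z$, argued through the fraction field $K(Z)$. One small imprecision: in your final paragraph, the assumption that $\langle q_{ij}^{\pm 1/2}-1\rangle$ is a proper ideal of $Z$ is relevant for the classical limit map $Z\to\mZ$, not for the present base change $Z\hookrightarrow k$; the invertibility of $q_{ii}^{1/2}-q_{ii}^{-1/2}$ in $k$ that you need here comes instead from $q_{ii}\neq 1$, which you already invoked correctly earlier.
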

\begin{proof}
Recall that $Z\leq k$ by construction. Extending to $k$, we are able to divide by $q_{ii}-1$ in (\ref{intrel2}), and recover the original commutator and bosonization relations in $U_{kG}(V, V^*)$. It remains to verify that
\[
I_{\op{max}}(V^{\op{int}})\otimes_Zk=\ker \Wor_{\op{int}}\Psi\otimes_Z k=\ker \Wor \Psi=I_{\op{max}}(V). 
\]
This follows by noting that $k$ is flat as a $Z$-module (since the function field $K(Z)$ is flat over $Z$ as a localization, and $k$ is free over $K(Z)$), and $V^{\op{int}}\otimes_Zk\cong V$ as $k$-vector spaces.
\end{proof}

\begin{definition}
We define the \emph{classical limit} of $U_{kG}(V, V^*)$ as the algebra
\[
U_k^{\op{cl}}(V, V^*):=\bigslant{(U_{ZG}(V^{\op{int}}, V^{\op{int}*})\otimes_Z\mZ)\otimes_\mZ k}{( \ker \varepsilon_G)},
\]
using the morphism $p\colon Z\to \mZ$ mapping all $q_{ij}^{\pm 1/2}$ to $1$, and the two sided ideal $( \ker \varepsilon_G)$ generated by the kernel of the augmentation map $\varepsilon_G\colon kG\to k$ mapping all group elements to $1$. Note that this ideal is a Hopf ideal.
\end{definition}

That is, to obtain the classical limit, we first set the parameters $q_{ij}^{\pm 1/2}$ equal to 1 in the integral form and then extend the resulting $\mZ$-module to a $k$-vector space, and finally set the group elements equal to 1 along the counit $\varepsilon_G\colon kG\to k$. We obtain a primitively generated Hopf algebra, and hence a Lie algebra, this way:

\begin{proposition}
The classical limit $U_k^{\op{cl}}(V, V^*)$ is a connected Hopf algebra, generated by primitive elements. Hence, for the  Lie algebra $\fr{p}_V$ of primitive elements, $U(\fr{p}_V)=U_k^{\op{cl}}(V, V^*)$. This algebra is generated by triples $f_i,v_i,t_i$ which form a subalgebra isomorphic to $U(\fr{sl}_2)$.
\end{proposition}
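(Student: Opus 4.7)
My plan is to establish the three assertions in the order stated: connectedness and primitive generation, invocation of Milnor--Moore, and identification of the $\mathfrak{sl}_2$-triples.

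First, I would trace each defining relation of the integral form $U_{ZG}(V^{\op{int}},V^{\op{int}*})$ through the two base changes. Under $p\colon Z\to\mZ$ we get $q_{ij}^{\pm 1/2}\mapsto 1$, and under the quotient by $(\ker\varepsilon_G)$ every $k_i$ and $l_i$ becomes~$1$. Relation (\ref{intrel2}) collapses to $0=0$ (both sides vanish), the commutator (\ref{intrel3}) survives as $[f_i,v_j]=\delta_{ij}t_i$, and equations (\ref{tirel1})--(\ref{tirel2}) specialize to
\begin{align*}
[f_i,t_j]&=2\delta_{ij}f_i, & [v_i,t_j]&=-2\delta_{ij}v_i, & [t_i,t_j]&=0.
\end{align*}
The bosonization relations (\ref{intrel1}) become trivial because the characters $\lambda_i$ take value $1$ at the identity. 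Most importantly, the coproducts on $v_i,f_i,t_i$ all reduce to $x\mapsto x\otimes 1+1\otimes x$, so every algebra generator is primitive.

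Since $U_k^{\op{cl}}(V,V^*)$ is generated by primitives, it is automatically cocommutative, and the standard argument on the coradical filtration (any subcoalgebra contains a grouplike, the only grouplike is $1$) shows it is connected. With $\op{char}k=0$, the Milnor--Moore theorem gives $U_k^{\op{cl}}(V,V^*)\cong U(\fr{p}_V)$, where $\fr{p}_V$ is the Lie algebra of primitive elements. In particular, $\fr{p}_V$ contains the subspace spanned by the $v_i,f_i,t_i$ and their iterated brackets, and the algebra is generated by $\fr{p}_V$.

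Finally, to exhibit the $\mathfrak{sl}_2$-triples, set $e=v_i$, $h=t_i$, $f=-f_i$. The computations above give $[h,e]=2e$, $[h,f]=-2f$, and $[e,f]=[v_i,-f_i]=[f_i,v_i]=t_i=h$, which are the defining relations of $\mathfrak{sl}_2$. This produces a surjection $U(\mathfrak{sl}_2)\twoheadrightarrow\langle v_i,f_i,t_i\rangle\subseteq U_k^{\op{cl}}(V,V^*)$.

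The main obstacle I anticipate is showing this surjection is an isomorphism, i.e.\ that the Nichols-type relations inherited from $I_{\op{max}}(V^{\op{int}})$ and $I_{\op{max}}(V^{\op{int}*})$ do not impose any extra relation on powers of a single $v_i$ or $f_i$. The key point here is that the Woronowicz symmetrizers $\Wor_n^{\op{int}}\Psi$ are $Z$-linear polynomial expressions in the braiding, and under $p$ the diagonal braiding $\Psi(v_i\otimes v_j)=q_{ij}v_j\otimes v_i$ specializes to the trivial flip. Consequently, the specialized symmetrizers become the usual symmetrizers of $\mathbf{Vect}_k$, whose common kernel in $T(V)$ is the ideal generated by $v_iv_j-v_jv_i$. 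In particular no power $v_i^n$ is killed, and the subalgebras $k[v_i]$ and $k[f_i]$ remain free inside $U_k^{\op{cl}}(V,V^*)$. Combined with the PBW theorem for $U(\mathfrak{sl}_2)$, this forces the map $U(\mathfrak{sl}_2)\to\langle v_i,f_i,t_i\rangle$ to be injective, completing the proof.
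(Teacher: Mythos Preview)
Your proof follows essentially the same route as the paper: specialize the defining relations, verify that the generators $v_i,f_i,t_i$ become primitive, invoke Milnor--Moore, and identify the $\mathfrak{sl}_2$-triples from the commutation relations. The computations you record are exactly those in the paper's proof.

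The only real difference is in how you handle non-collapse and the injectivity of $U(\mathfrak{sl}_2)\to U_k^{\op{cl}}(V,V^*)$. The paper disposes of this in one line: freeness of $V^{\op{int}}$ over $Z$ guarantees that $V^{\op{int}}\oplus V^{\op{int}*}$ embeds into $\mathfrak{p}_V$, so $v_i\neq 0$; then, since $\mathfrak{sl}_2$ is simple, the induced Lie map $\mathfrak{sl}_2\to\mathfrak{p}_V$ is injective and PBW lifts this to an embedding of enveloping algebras. Your argument via the specialized Woronowicz symmetrizers is correct --- the image of $I_{\op{max}}(V^{\op{int}})$ under base change lands inside the kernel of the specialized symmetrizer, which does not contain $v_i^{\otimes n}$ since it acts there as $n!$ --- but it proves more than you need (that all of $k[v_i]$ embeds) and your final sentence is a little loose: knowing only that $k[v_i]$ and $k[f_i]$ embed does not by itself force the map from $U(\mathfrak{sl}_2)$ to be injective. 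The clean way to close is exactly the paper's implicit step: once $v_i\neq 0$, simplicity of $\mathfrak{sl}_2$ forces the Lie map to be an embedding, and then PBW finishes.
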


\begin{proof}
Lemma \ref{hopflemma2} ensures that $U_k^{\op{cl}}(V, V^*)$ is a Hopf algebra over $k$, and freeness of $V^{\op{int}}$ over $Z$ ensures that the positive and negative part do not collapse to the zero space. In particular, the $k$-vector space $V^{\op{int}}\oplus V^{\op{int}*}$ embeds into the Lie algebra $\fr{p}_V$ of primitive elements.
In the classical limit, we obtain the relations
\begin{align}
[f_i,v_j]&=\delta_{i,j}t_i,
&[f_i,t_j]&=2\delta_{i,j}f_i,
&[v_i,t_j]&=-2\delta_{i,j}v_i.
\end{align}
Hence every triple $f_i, v_i, t_i$ generates a Lie subalgebra of $\fr{p}_V$ isomorphic to $\fr{sl}_2$. Note that $U_{k}^{\op{cl}}(V, V^*)$ is generated by the $t_i$ and the primitive elements:
\begin{align*}
\Delta(f_i)=&f_i\otimes 1+1\otimes f_i,&\Delta(v_i)&=v_i\otimes 1+1\otimes v_i.
\end{align*}
We also compute
\[
\Delta(t_i)=\Delta([f_i,v_i])=[f_i,v_i]\otimes k_i+l_i\otimes[f_i,v_i]=t_i\otimes k_i+l_i\otimes t_i.
\]
Hence, $t_i$ is skew-primitive in $U_{ZG}^{\op{int}}(V, V^*)$ and primitive in the classical limit. Thus, $U^{\op{cl}}_{k}(V,V^*)$ is a pointed Hopf algebra over the trivial group. That is, a \emph{connected} pointed Hopf algebra. It is further cocommutative and Theorem 5.6.5 in \cite{Mon} implies that such a Hopf algebra is of the form $U(\fr{g})$ where $\fr{g}=\fr{p}_V$ in $\operatorname{char}k=0$.
\end{proof}

Note that $U_{k}^{\op{cl}}(V, V^*)$ is a braided double over the polynomial ring $S(T)$, where $T=k\langle t_1,\ldots,t_n \rangle$ (which is not necessarily $n$-dimensional). The action is given by $t_j\triangleright v_i=2\delta_{i,j}v_i$, and the quasi-coaction is given by $\delta(v_i)=t_i\otimes v_i$ which is \emph{not} a coaction, hence $U_{ZG}^{\op{int}}(V, V^*)$ is \emph{not} a braided Heisenberg double. It is also not an asymmetric braided Drinfeld double.

\begin{example}
For $U_q(\fr{g})$, $\fr{g}$ a semisimple Lie algebra, viewed as a braided Drinfeld double, the classical limit is $U(\fr{g})$.
\end{example}

We can also compute examples that do not give finite-dimensional semisimple Lie algebras. As a general rule, the relations between the parameters $q_{ij}$ determine the relations in the Lie algebra. It is easy to construct free examples, for which there are no relations between the $v_1,\ldots, v_n$ by choosing algebraically independent parameters $q_{ij}$. The work of \cite{Ros} and \cite{AS3} give restrictions on examples satisfying the growth condition of finite Gelfand--Kirillov dimension. We will view their results in the setting of this paper in Section \ref{section3}.


\section{Classes of Quantum Groups}\label{multiparametersection}

In this section, we relate the classification from Section~\ref{section2} to various classes of examples which are often regarded as quantum groups. This includes the multiparameter quantum groups studied by \cite{FRT, Res, Sud,AST} and others in Section \ref{quantumgroupsmulti}, a characterization of Drinfeld--Jimbo quantum groups in Section \ref{section3}, and classes of examples of pointed Hopf algebras from the work of Radford in Section \ref{radford}. The classification in Theorem \ref{mainclassificationthm} points out natural generalizations of these classes of examples\footnote{While this paper was under revision, it was pointed out by Dr Gast\'on Andr\'es Garc\'ia that a further series of examples of asymmetric braided Drinfeld doubles is given in \cite[Definition~7]{HPR} and described in \cite{Gar} using a family of pointed Hopf algebras defined in \cite{ARS}.}. We finally sketch how one can define analogues of quantum groups using triangular decompositions over other Hopf algebras than $kG$.

\subsection{Multiparameter Quantum Groups}\label{quantumgroupsmulti}

Let $k$ be a field of characteristic zero. For the purpose of this section, let $\lambda \in k$ be generic, and $p_{ij}\in k$ for $1\leq i<j\leq n$. Assume that $p_{ii}=1$ and $p_{ji}=p_{ij}^{-1}$. Following \cite{AST,CM} and to fix notation, we set

\begin{align*}
&\kappa_j^{(i)}=\begin{cases}p_{ij},& \text{if }i<j,\\ \lambda, & \text{if } i=j,\\
\tfrac{\lambda}{p_{ji}}, 
& \text{if }i>j.\end{cases}&&\lambda_j^{(i)}=\begin{cases}\tfrac{\lambda}{p_{ij}},& \text{if }i<j,\\ \lambda, & \text{if } i=j,\\
p_{ji}, & \text{if }i>j.
\end{cases}
\end{align*}
We will provide a variation of the presentation of \cite{AST,CM} in order to display the multiparameter quantum group $U_{\lambda,\underline{p}}(\mathfrak{gl}_n)$ as a Hopf algebra with triangular decomposition.

\begin{example}[Multiparameter quantum groups]
We define on  $F=k \langle f_1, \ldots, f_{n-1}\rangle$ a YD-module structure over an abelian group $G$ with generators $k_1, \ldots, k_{n-1}$, $l_1, \ldots, l_{n-1}$. Denote the dual by $E=k\langle e_1,\ldots, e_{n-1}\rangle$, where the pairing is given by $\langle e_i,f_j\rangle=(1-\lambda)\delta_{ij}$.  The YD-structure is of separable type, and given by assigning the right degree $k_i$ to $f_i$, and the left degree $l_i$ to $e_i$, and actions
\begin{align}
k_i\triangleright f_j&=\lambda_j(k_i)f_j=\frac{\lambda_{j+1}^{(i)}\lambda_{j}^{(i+1)}}{\lambda_{j}^{(i)}\lambda_{j+1}^{(i+1)}}f_j,\\
l_i\triangleright f_j&=\lambda_j(l_i)f_j=\frac{\kappa_j^{(i)}\kappa_{j+1}^{(i+1)}}{\kappa_{j+1}^{(i)}\kappa_j^{(i+1)}}f_j,
\end{align}
for $i,j=1,\ldots n-1$. We will relate the \emph{multiparameter quantum group} $U_{\lambda,\underline{p}}(\mathfrak{gl}_n)$ to be the asymmetric braided Drinfeld double $U_{kG}(F,E)$.
\end{example}

Note that the definition of $U_{kG}(F,E)$ is possible as (\ref{characterrequirement}) holds, i.e.
\begin{align*}
q_{ij}:=\lambda_j(k_i)=\frac{\lambda_{j+1}^{(i)}\lambda_{j}^{(i+1)}}{\lambda_{j}^{(i)}\lambda_{j+1}^{(i+1)}}=\frac{\kappa_{i+1}^{(j)}\kappa_i^{(j+1)}}{\kappa_i^{(j)}\kappa_{i+1}^{(j+1)}}=\lambda_i(l_j)^{-1}.
\end{align*}
The commutator relation in $U_{kG}(F,E)$ is given by
\begin{equation}
[e_i,f_j]=(1-\lambda)\delta_{ij}(k_i-l_i).
\end{equation}
The following isomorphism displays $U_{kG}(F,E)$ as an indecomposable subalgebra of a multiparameter quantum group considered in the literature:

\begin{proposition}
There is an isomorphism of Hopf algebras $U_{kG}(F,E)\cong U'$ where $U'$ is a Hopf subalgebra of the multiparameter quantum group $U=U_{\lambda,\underline{p}}(\fr{gl}_n)$. \end{proposition}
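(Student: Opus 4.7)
The plan is to exhibit $U' \subseteq U_{\lambda,\underline{p}}(\mathfrak{gl}_n)$ explicitly and then define a map from $U_{kG}(F,E)$ onto it by matching generators and relations. First, I would recall the presentation of $U = U_{\lambda,\underline{p}}(\mathfrak{gl}_n)$ following \cite{CM}, which has generators $E_i, F_i$ for $i=1,\ldots,n-1$ together with invertible commuting Cartan elements $\omega_1,\ldots,\omega_n,\omega_1',\ldots,\omega_n'$, such that $\omega_i$ (respectively $\omega_i'$) acts on $F_j$ by the scalar $\lambda_j^{(i)}$ (respectively $\kappa_j^{(i)}$). The subalgebra $U'$ is then defined to be the one generated by the $E_i, F_i$ together with the ratios
\[
K_i := \omega_{i+1}^{-1}\omega_i \omega_{i+1}'(\omega_i')^{-1}, \qquad L_i := \omega_{i+1}\omega_i^{-1} (\omega_i')^{-1}\omega_{i+1}',
\]
or whatever combination is needed so that $K_i, L_i$ act on $F_j$ by precisely the scalars $\lambda_j(k_i)$ and $\lambda_j(l_i)$ defined in the statement.

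Next, I would define a morphism of algebras $\Phi \colon U_{kG}(F,E) \to U'$ by $f_i \mapsto F_i$, $e_i \mapsto E_i$, $k_i \mapsto K_i$, $l_i \mapsto L_i$, and verify it extends to a well-defined Hopf algebra map. There are three families of relations to check: (i) the bosonization relations $k_i f_j = \lambda_j(k_i) f_j k_i$ and $l_i f_j = \lambda_j(l_i) f_j l_i$, which follow from the direct computation that the chosen products $K_i, L_i$ act on $F_j$ with the specified characters (this is the key combinatorial check, and amounts to rearranging the defining formulas for the $\lambda_j^{(i)}$ and $\kappa_j^{(i)}$); (ii) the cross relation $[e_i, f_j] = (1-\lambda)\delta_{ij}(k_i - l_i)$, which matches the standard cross relation in $U$ after identifying $K_i - L_i$ with the appropriate Cartan difference times $(1-\lambda)^{-1}$; and (iii) the Serre-type relations cutting out the maximal triangular Hopf ideal in $T(F)\otimes kG\otimes T(E)$. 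For the last family, since the braiding matrix $q_{ij} = \lambda_j(k_i)$ on $F$ induced from $\Phi$ agrees with the braiding on the subalgebra generated by $F_1,\ldots,F_{n-1}$ in $U$, and since the multiparameter Serre relations are precisely the generating relations of the Nichols algebra of this braided vector space, they are automatically satisfied.

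Finally, to upgrade $\Phi$ from a surjection onto $U'$ to an isomorphism, I would use the triangular decomposition of $U$ (the PBW theorem for multiparameter quantum groups) together with the analogous triangular decomposition of $U_{kG}(F,E)$ from Lemma \ref{ideallemma}. Both positive parts surject onto the same Nichols algebra $\cB(F)$ (and likewise for the negative parts), and $\Phi$ restricted to the group part is injective because the $k_i, l_i$ generate a free abelian subgroup of the $\omega_j, \omega_j'$ part of $U$ (this uses genericity of $\lambda$). Putting these together with the PBW factorization yields injectivity of $\Phi$. That $U'$ is a Hopf subalgebra of $U$ is automatic since it is generated by group-like elements and $(K_i,1)$-, $(1,L_i)$-skew-primitive elements, and these generators are closed under the coproduct and antipode of $U$.

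The main obstacle I anticipate is the bookkeeping in step (i): carefully verifying that the explicit ratios of $\lambda_j^{(i)}$ and $\kappa_j^{(i)}$ produced by the action of $K_i$ and $L_i$ on $F_j$ in $U$ really equal the prescribed characters $\lambda_j(k_i)$ and $\lambda_j(l_i)$. This is purely a direct computation, but the multiparameter conventions of \cite{FRT,AST,CM} differ in normalization, so the precise products defining $K_i, L_i$ have to be chosen with care; once this matching is done the rest of the argument proceeds formally from the asymmetric braided Drinfeld double construction of Section~\ref{asymdrin}.
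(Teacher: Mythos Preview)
Your proposal is correct and follows essentially the same route as the paper: define a map on generators into $U$, check the bosonization and cross relations directly, and invoke the fact (which the paper isolates as a separate lemma immediately afterward) that the maximal triangular Hopf ideal is generated by the quantum Serre relations to handle part (iii) and descend from the free double. One detail to watch is that in the paper's conventions the naive assignment $e_i\mapsto E_i$ needs a group-element twist---the paper takes $\phi(e_i)=\lambda E_iK_{i+1}^{-1}K_i$, $\phi(f_i)=F_i$, $\phi(k_i)=L_{i+1}L_i^{-1}$, $\phi(l_i)=K_{i+1}^{-1}K_i$---so that the coproducts match; your caution about normalization in step~(i) is exactly the place this shows up.
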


\begin{proof}
We prove the theorem by first considering the morphism
\[
\phi\colon T(E)\otimes kG\otimes T(F)\longrightarrow U.
\]
Such a morphism will descent to an injective morphism $\overline{\phi}\colon U_{kG}(F,E)\to U$ by the following Lemma \ref{qserrelemma}. We further note that the image $\op{Im}{\overline{\phi}}=:U'$ is a Hopf subalgebra isomorphic to $U_{kG}(F,E)$.
Denote the generators of $U$ by $E_i,F_i$ for $i=1,\ldots,n-1$ and group elements $K_i,L_i$ for $i=1,\ldots, n$ (see \cite[4.8]{CM}). The map $\phi$ is defined by
$\phi(e_i)=\lambda E_iK^{-1}_{i+1}K_i$, $\phi(f_i):=F_i$, $\phi(k_i)=L_{i+1}L_i^{-1}$, and $\phi(l_i):=K_{i+1}^{-1}K_i$. One checks directly that the relations in the free braided double $T(E)\otimes kG\otimes T(F)$ are preserved under this map, using the presentation in \cite[4.8]{CM} for $U$.
\end{proof}

\begin{lemma}\label{qserrelemma}
The largest ideal in $\cI_\Delta(A)$ for $A=U_{kG}(F,E)$ is generated by the quantum Serre relations 
\begin{align}
\underline{\ad}(e_i)^{1-a_{ij}}(e_j)=\underline{\ad}(f_i)^{1-a_{ij}}(f_j)=0,
\end{align}
where $\underline{\ad}(e_i)(e_j)=e_ie_j-q_{ij}e_je_i$.
\end{lemma}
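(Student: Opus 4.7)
\emph{Plan of proof.} The strategy is to reduce the statement to a question about Nichols algebras of diagonal braidings, and then to apply the known presentation of Nichols algebras of finite Cartan type $A_{n-1}$.

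First, by Lemma~\ref{ideallemma} applied to $A = U_{kG}(F,E) = \aDrin_{kG}(T(E)^{\cop}, T(F)^{\oop})/I_{\max}(A)$, the largest triangular Hopf ideal has the form
\[
I_{\max}(A) = I_{\max}(F)\otimes kG\otimes T(E) + T(F)\otimes kG\otimes I_{\max}(E),
\]
where $I_{\max}(F)$ is the maximal Nichols ideal of $T(F)$ (with respect to the right coaction $f_i \mapsto f_i \otimes k_i$) and $I_{\max}(E)$ is the maximal Nichols ideal of $T(E)$ (with respect to the left coaction $e_i \mapsto l_i \otimes e_i$). Therefore it is enough to identify these two Nichols ideals separately as being generated by the quantum Serre relations on the $f_i$ and the $e_i$ respectively.

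Second, one verifies that the braidings on $F$ and on $E$ are of diagonal type with braiding matrix $q_{ij} = \lambda_j(k_i)$. The plan is to substitute the explicit formulas
\[
\lambda_{j+1}^{(i)},\ \lambda_{j}^{(i+1)},\ \lambda_{j}^{(i)},\ \lambda_{j+1}^{(i+1)}
\]
in the definition of $\lambda_j(k_i)$, and to check directly that:
\begin{itemize}
\item $q_{ii}$ is a fixed scalar involving $\lambda$ (and the $p_{i,i+1}$), independent of $i$ up to rescaling of generators;
\item for $|i-j|=1$, one obtains $q_{ij}q_{ji} = q_{ii}^{-1}$;
\item for $|i-j|\ge 2$, one obtains $q_{ij}q_{ji} = 1$.
\end{itemize}
This matches condition (\ref{cartandatum}) with Cartan matrix of type $A_{n-1}$. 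The identical computation, replacing $k_i$ by $l_i$ and using the $\kappa_j^{(i)}$, identifies the dual braiding on $E$ as being of the same Cartan type $A_{n-1}$.

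Third, with the braiding on $F$ identified as diagonal of finite Cartan type $A_{n-1}$ at a generic parameter (guaranteed by the genericity assumption on $\lambda$ made at the start of Section~\ref{quantumgroupsmulti}), I would appeal to the classical presentation of Nichols algebras of finite Cartan type: $\cB(F) = T(F)/I_{\max}(F)$ is presented as the tensor algebra modulo the quantum Serre relations $\underline{\ad}(f_i)^{1-a_{ij}}(f_j) = 0$, and similarly $\cB(E)$ is presented by the analogous relations in the $e_i$. This is precisely the content of the presentation obtained by Rosso and by Andruskiewitsch--Schneider (see Section~\ref{classificationsurvey}, relation~(\ref{asrel2})) for Nichols algebras of diagonal braidings of finite Cartan type, which in particular applies to the generic case where no additional root vector relations of the form (\ref{asrel4}) appear. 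Combining with the first step yields the claim.

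\emph{Main obstacle.} The numerical verification in the second step is routine but somewhat delicate because of the numerous cases in the definitions of $\kappa_j^{(i)}$ and $\lambda_j^{(i)}$; in particular one must check that the asymmetric multiparameter nature of the braiding (the fact that $q_{ij}\neq q_{ji}$ in general) is absorbed into a twist and does not obstruct the Cartan type $A_{n-1}$ structure. Once this is done, invoking the presentation of Nichols algebras of finite Cartan type is immediate.
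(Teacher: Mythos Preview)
Your proposal is correct and follows essentially the same two-step structure as the paper's proof: both begin by invoking Lemma~\ref{ideallemma} to reduce the question to identifying the Nichols ideals $I_{\max}(F)$ and $I_{\max}(E)$, and both then argue that these are generated by the quantum Serre relations.

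The difference lies in the second step. The paper does not compute the Cartan type of the braiding directly; instead it cites results of Costantini--Morotti \cite[Lemma~4.5, Theorem~4.4]{CM}, which show (using the 2-cocycle twist of \cite{Res,AST} relating the multiparameter deformation to the one-parameter one) that the quantum Serre relations generate the radical of the Hopf pairing, and hence the Nichols ideal via the map $\phi$. Your route is more self-contained: you verify by hand that $(q_{ij})$ satisfies $q_{ii}=\lambda^{-1}$, $q_{ij}q_{ji}=\lambda$ for $|i-j|=1$, and $q_{ij}q_{ji}=1$ for $|i-j|\geq 2$, i.e.\ that the braiding is of finite Cartan type $A_{n-1}$, and then appeal to the general presentation of Nichols algebras of finite Cartan type at generic parameters. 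Both approaches ultimately rest on a twist argument (yours implicitly, through twist-equivalence to DJ-type; the paper's explicitly, through the cocycle of \cite{Res,AST}), but yours avoids the dependence on the specific $\mathfrak{gl}_n$ literature. One small point to make precise: Rosso's theorem as quoted in Section~\ref{section3} is stated for braidings of DJ-type, not merely Cartan type, so you should either invoke the Cartan-type version directly (e.g.\ from the Andruskiewitsch--Schneider or Angiono literature) or make explicit the twist-equivalence step from Lemma~\ref{twistlemma} and note that the twist isomorphism carries quantum Serre relations to quantum Serre relations.
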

\begin{proof}
It follows from Lemma \ref{ideallemma} that the maximal ideal $J$ in $\cI_\Delta(A)$ is given by $J=( I,I^*)$ where $I$ is the Nichols ideal of the YD-module $F$. 

Generation of the maximal triangular ideal by quantum Serre relations for $U_{\lambda,\underline{p}}(\fr{gl}_n)$ follows from Lemma 4.5 in \cite{CM}.
For this, it is crucial that $\lambda$ is not a root of unity. The proof uses the observation in \cite{Res}, or \cite{AST} for the deformed function algebra, that multiparameter quantum groups, using quantum coordinate rings, can be obtained via 2-cocycles from one-parameter quantum groups.
The fact that the quantum Serre relations generate the Nichols ideal then follows from Theorem 4.4 in \cite{CM} where it is shown that these relations generate the radical of a Hopf pairing. Using the map $\phi$, this result describes the Nichols ideals of $T(F)$, $T(E)$ as generated by quantum Serre relations.
\end{proof}

The result that the multiparameter quantum group $U_{\lambda,\underline{p}}(\mathfrak{gl}_n)$ is the asymmetric braided Drinfeld double $U_{kG}(F,E)$ can be seen as a generalization of the result in \cite{BW} where the two-parameter quantum groups were shown to be Drinfeld doubles.

\subsection{Characterization of Drinfeld--Jimbo Quantum Groups}\label{section3}

Let $\op{char} k=0$ in this section. In Section~\ref{section2} we observed that for an algebra $A$ with symmetric triangular decomposition of separable type to be an indecomposable pointed Hopf algebra, $G(A)$ needs to be abelian acting on $V$ by scalars. That means, in the terminology of \cite{AS} that the YD-braiding $\Psi(v\otimes w)=v^{(-1)}\triangleright w\otimes v^{(0)}$ is of \emph{diagonal type}, i.e. there exist non-zero scalars $q_{ij}$ such that $\Psi(v_i\otimes v_j)=q_{ij}v_j\otimes v_i$ for a basis $\{v_1,\ldots,v_n\}$. 

We fix a choice of YD-module structure over an abelian group $G$ for this section to describe the diagonal braiding. That is, $q_{ij}=\lambda_j(k_i)$ for the characters $\lambda_i$ by which $G$ acts on $kv_i$ and group elements $k_i$ such that $\delta(v_i)=k_i\otimes v_i$. It is a basic observation that the braided Hopf algebras $T(V)/I$ for $I\in \cI_V$, including the Nichols algebras for $V$, only depend on the braiding on $V$ (rather than the concrete choice of $\lambda_i$, $k_i$). However, different diagonal braidings $(V, \Psi)$ and $(V, \Psi')$ may give isomorphic braided Hopf algebras $T(V)/I$. Such isomorphisms can be obtained using the notion of \emph{twist equivalence} for diagonal braidings (which is a special case of the more general concept of twisting a Hopf algebra by a 2-cocycle).

\begin{definition}
Two braided $k$-vector spaces of diagonal type $(V,\Psi)$, $(V',\Psi')$ (given by scalars $q_{ij}$, $q_{ij}'$) are \emph{twist equivalent} if $V\cong V'$, $q_{ii}=q_{ii}'$, and $q_{ij}q_{ji}=q_{ij}'q_{ji}'$.
\end{definition}

\begin{lemma}\label{twistlemma}
If $(V,\Psi)$, $(V',\Psi')$ are twist equivalent of diagonal type, then $T(V)\cong T(V')$ as braided Hopf algebras in the category of braided $k$-vector spaces, preserving the natural grading.
\end{lemma}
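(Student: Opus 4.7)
The plan is to exhibit the isomorphism explicitly via a $2$-cocycle twist on the shared $\mathbb{Z}^n$-grading of the two tensor algebras. First identify $V\cong V'$ via a chosen linear isomorphism so that both $T(V)$ and $T(V')$ sit on a common graded vector space with homogeneous basis $v_1,\dots,v_n$. The diagonal braidings $\Psi,\Psi'$ are then determined by the bicharacters $\chi(e_i,e_j)=q_{ij}$ and $\chi'(e_i,e_j)=q'_{ij}$ on $\mathbb{Z}^n$, and the twist-equivalence hypotheses $q_{ii}=q'_{ii}$ and $q_{ij}q_{ji}=q'_{ij}q'_{ji}$ say exactly that $\chi'\chi^{-1}$ is an antisymmetric bicharacter.

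The first step will be to choose a bicharacter $F\colon\mathbb{Z}^n\times\mathbb{Z}^n\to k^\times$ whose commutator recovers this ratio, i.e.\ $F(a,b)F(b,a)^{-1}=\chi'(a,b)\chi(a,b)^{-1}$; concretely, take $F(e_i,e_j):=q'_{ij}q_{ij}^{-1}$ for $i<j$ and $F(e_i,e_j):=1$ for $i\ge j$, then extend by bimultiplicativity. The next step is to define the candidate isomorphism $\phi\colon T(V)\to T(V')$ on homogeneous monomials by
\[
\phi(v_{i_1}\cdots v_{i_k}):=\Big(\prod_{1\le s<t\le k}F(e_{i_s},e_{i_t})\Big)\,v'_{i_1}\cdots v'_{i_k},
\]
a manifestly graded $k$-linear bijection. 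Compatibility with multiplication then reduces to bimultiplicativity of $F$: the excess cocycle factor contributed when concatenating two homogeneous words equals $F$ applied to their total degrees, which is the only discrepancy between the two free multiplications.

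The main content, and the main obstacle, will be the coalgebra compatibility. The coproduct on $T(V)$ is the quantum shuffle coproduct determined by primitivity of the generators: $\Delta(v_{i_1}\cdots v_{i_k})$ is a sum of $2^k$ terms, each weighted by a monomial in the $q_{ij}$'s recording the braiding cost of the shuffle required to split the word into a $(p,k-p)$-piece; the analogous formula for $T(V')$ uses $q'_{ij}$-weights. The defining identity $F(a,b)F(b,a)^{-1}=\chi'(a,b)\chi(a,b)^{-1}$, iterated by bimultiplicativity over every pair of basis letters moved past each other in a given shuffle, converts every $q$-weight to the matching $q'$-weight up to exactly the prefactor built into $\phi$. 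This combinatorial check on shuffles is the delicate part, though it is routine once the $F$-factor is set up correctly. Compatibility with the ambient braidings on $T(V)\otimes T(V)$ and $T(V')\otimes T(V')$ is then forced by the defining property of $F$ on degree-one components and propagates to higher degrees by multiplicativity of the grading, and the antipode is preserved automatically since it is determined by the remaining braided Hopf algebra data.
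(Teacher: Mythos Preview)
Your choice of the bicharacter $F$ is exactly the $2$-cocycle $\sigma$ that the paper writes down (the paper takes $\sigma(v_i\otimes v_j)=q'_{ij}q_{ij}^{-1}$ for $i<j$ and $1$ otherwise and then defers to \cite[3.9--3.10]{AS} for the actual argument). So at the level of strategy you are on the same track.

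Where your argument breaks is the claim that the explicit rescaling map
\[
\phi(v_{i_1}\cdots v_{i_k})=\Big(\prod_{s<t}F(e_{i_s},e_{i_t})\Big)\,v'_{i_1}\cdots v'_{i_k}
\]
is an isomorphism of braided Hopf algebras. First, there is no ``discrepancy between the two free multiplications'': both $T(V)$ and $T(V')$ carry the \emph{same} concatenation product, independent of the braiding, so your $\phi$ is simply not an algebra map (the cross factor $F(\lvert x\rvert,\lvert y\rvert)$ does not cancel against anything). Second, and more seriously, $\phi$ is not a coalgebra map either. Already in degree two, comparing $(\phi\otimes\phi)\Delta(v_iv_j)$ with $\Delta'(\phi(v_iv_j))=F(e_i,e_j)\Delta'(v'_iv'_j)$ on the summand $v'_i\otimes v'_j$ gives $1$ versus $F(e_i,e_j)$, which disagree whenever $i<j$ and $q_{ij}\neq q'_{ij}$. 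The shuffle bookkeeping you describe does convert the weight $q_{ij}$ on the \emph{swapped} term $v'_j\otimes v'_i$ into $q'_{ij}$, but it introduces the same unwanted factor on the unswapped term. No diagonal rescaling of monomials can repair this: the braidings $\chi$ and $\chi'$ depend only on the $\mathbb{Z}^n$-degree, so any grading-preserving diagonal map commutes with both and cannot intertwine them.

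The way the Andruskiewitsch--Schneider argument actually runs is not by exhibiting a single linear map that simultaneously preserves product and coproduct, but by performing a cocycle \emph{twist} of the braided Hopf algebra structure: one deforms the multiplication of $T(V)$ by $\sigma$, which simultaneously deforms the ambient braiding from $(q_{ij})$ to $(q'_{ij})$, and then observes that the resulting braided Hopf algebra $T(V)_\sigma$ is again a tensor algebra on primitive generators in the new braided category, hence canonically isomorphic to $T(V')$. The paper's remark that ``the isomorphism is not an isomorphism in the category of YD-modules over $kG$'' is precisely the warning that one should not expect a naive structure-preserving linear map here. So your proof sketch needs to be recast as a cocycle-twist argument rather than a direct verification that $\phi$ respects $\Delta$.
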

\begin{proof}
For a proof see e.g. \cite[3.9--3.10]{AS}. We can find generators $v_i$ of $V$ and $v_i'$ of $V'$ such that the isomorphism $\phi$ is determined by $v_i\mapsto v_i'$. Defining a 2-cocycle $\sigma$ by $\sigma(v_i\otimes v_j)=q_{ij}'q_{ij}^{-1}$ for $i<j$ and $1$ otherwise, we find that the product $v_iv_j$ maps to the product twisted by $\sigma$.  
Note that the isomorphism is \emph{not} an isomorphism in the category of YD-modules over $kG$ unless $(V',\Psi')=(V,\Psi)$.
\end{proof}

For an ideal $I\in \cI_V$, denote the corresponding ideal under the isomorphism $T(V)\cong T(V')$ from Lemma~\ref{twistlemma} by $I'$. Then we conclude that $T(V)/I\cong T(V')/{I'}$ is also an isomorphism of braided Hopf algebras. In particular, $\cB(V)\cong \cB(V')$ for the corresponding Nichols algebras.

\begin{lemma}\label{twistdrinfelddoubles}
If $(V,\Psi)$ and $(V',\Psi')$ are twist equivalent, such that
\[
G=\langle k_1,\ldots, k_n \rangle\cong\langle k_1',\ldots, k_n' \rangle=G'
\]
via $k_i\mapsto k_i'$, then $U_{kG}(V, V^*)\cong U_{kG'}(V', {V'}^{*})$ as Hopf algebras.
\end{lemma}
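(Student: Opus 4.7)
The plan is to construct a Hopf algebra isomorphism $\Phi: U_{kG}(V, V^*) \to U_{kG'}(V', {V'}^*)$ by combining the braided Hopf algebra isomorphism $\phi: T(V) \to T(V')$ from Lemma~\ref{twistlemma} with the group isomorphism $k_i \mapsto k_i'$ and a dual counterpart $\phi^*: T(V^*) \to T({V'}^*)$. Fix the generators from Proposition~\ref{primitiveprop} on both sides. By (\ref{characterrequirement}) together with $\lambda_i=\mu_i$ in the separable case, the group elements $l_i, l_i'$ are determined by the diagonal parameters $q_{ii} = q_{ii}'$, so the group isomorphism automatically identifies $l_i$ with $l_i'$. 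Define $\Phi$ on generators by $v_i \mapsto v_i'$, $f_i \mapsto f_i'$, $k_i \mapsto k_i'$, $l_i \mapsto l_i'$.

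First I would apply Lemma~\ref{twistlemma}, which provides $\phi$ via the 2-cocycle $\sigma(v_i \otimes v_j) = q_{ij}'q_{ij}^{-1}$ (for $i<j$), and the analogous dual construction gives $\phi^*$ on $T(V^*)$ with inverse cocycle. The bosonization relations $k_a v_i = q_{ai} v_i k_a$ in the source and $k_a' v_i' = q_{ai}' v_i' k_a'$ in the target differ exactly by the cocycle $\sigma$; since the twist $\phi$ absorbs this difference into the multiplication of $T(V')$, the composite $\Phi$ restricts to Hopf algebra isomorphisms $T(V)\rtimes kG \to T(V')\rtimes kG'$ and $kG \ltimes T(V^*) \to kG' \ltimes T({V'}^*)$. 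The coproducts on generators, being $\Delta(v_i) = v_i \otimes k_i + 1 \otimes v_i$, are manifestly preserved under $\Phi$, since the twist changes multiplication but not the coalgebra structure.

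Next I would check compatibility with the commutator relation $[f_i, v_j] = \delta_{ij}\gamma_{ii}(k_j - l_i)$. Because $\gamma_{ii}$ depends only on the diagonal braiding $q_{ii}$ (which is fixed by twist equivalence), and the group-element factor transfers directly via the identification $k_i \mapsto k_i'$, $l_i \mapsto l_i'$, the relation is preserved once we verify that the cocycle $\sigma$ on $T(V)$ and its dual on $T(V^*)$ cancel when applied to the degree-zero output $k_j - l_i \in kG$. This follows from the antisymmetric structure $\sigma(v_i, v_j)\sigma(v_j, v_i)^{-1}$ being exactly the ratio that accounts for the difference between $q_{ij}q_{ji}$ and $q_{ij}'q_{ji}'$, which is trivial by the twist equivalence hypothesis. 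Finally, Lemma~\ref{twistlemma} identifies the ideals in $\cI_V$ with those in $\cI_{V'}$ under $\phi$, and a dual statement holds for $V^*$, so by Lemma~\ref{ideallemma} the maximal triangular Hopf ideals correspond under $\Phi$, yielding the claimed isomorphism on the quotients.

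The main obstacle is the bookkeeping of the cocycle twists on $T(V)$ and $T(V^*)$ in the presence of the cross-commutator relation: one must ensure the twist on the positive part and its dual on the negative part combine coherently when passing through $[f_i, v_j] \in kG$. This hinges on the fact that twist equivalence preserves precisely the symmetric part $q_{ij}q_{ji}$ of the braiding, which is exactly the data that enters the weak duality condition (\ref{weaklyduallypairedcond}) and hence the commutator relation — while the individual $q_{ij}$ only enter the bosonization, where the twist is designed to compensate.
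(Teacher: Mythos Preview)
Your approach is essentially the same as the paper's: build the isomorphism from the twist map $\phi$ of Lemma~\ref{twistlemma} on the positive and negative parts, the group isomorphism $k_i\mapsto k_i'$ on $kG$, and then verify that the bosonization and commutator relations are preserved; finally pass to the quotient by the maximal triangular Hopf ideal via Lemma~\ref{ideallemma}. The paper's proof is a terse four-sentence version of exactly this outline.

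One remark: your justification that ``the group elements $l_i,l_i'$ are determined by the diagonal parameters $q_{ii}=q_{ii}'$'' is not right as stated --- group elements are not determined by single scalars. In the context of Section~\ref{section3} the group $G$ is generated by the $k_i$ alone and $V^*$ carries the dual YD-structure, so in fact $l_i=k_i^{-1}$ and the group isomorphism $k_i\mapsto k_i'$ carries $l_i$ to $l_i'$ automatically. This is what makes the commutator relation~(\ref{commrel2}) go through under $\Phi$; the paper glosses over this in the same way by simply invoking ``the isomorphism on $kG$''. Otherwise your bookkeeping with the cocycle $\sigma$ is a correct elaboration of what the paper leaves implicit.
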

\begin{proof}
By Lemma~\ref{twistlemma}, $T(V)/I\cong T(V')/{I'}$ and $T(V^*)/{I^*}\cong T({V'}^{*})/{{I'}^{*}}$. By the assumptions on the group generators, $k_i\mapsto k_i'$ extends to an isomorphism $kG\cong kG'$. Thus we can define a morphism $U_{kG}(V, V^*)\to U_{kG}(V', V'^{*})$ which is an isomorphism of $k$-vector spaces. Further, preservation of the bosonization condition can be checked on generators using the isomorphism $\phi$ from Lemma~\ref{twistlemma}. Finally, the commutator relation (\ref{commrel2}) is preserved using the isomorphism on $kG$.
\end{proof}

Diagonal braidings are a very general class of braidings. Quantized enveloping algebras at generic parameters however are based on braidings of specific type, called \emph{Drinfeld--Jimbo type}. Following \cite{AS3}, there are different classes of braidings which we distinguish:

\begin{definition}[{\cite[Definition~1.1]{AS3}}]
Let $(q_{ij})$ be the $n\times n$-matrix of a braiding of diagonal type.
\begin{enumerate}
\item[(a)] The braiding given by $(q_{ij})$ is \emph{generic} if $q_{ii}$ is not a root of unity for any $i=1,\ldots,n$.
\item[(b)] In the case $k=\mC$ we say the braiding $(q_{ij})$ is \emph{positive} if it is generic and all diagonal elements $q_{ii}$ are positive real numbers.
\item[(c)]
The braiding $(q_{ij})$ is of \emph{Cartan type} if $q_{ii}\neq 1$ for all $i$ and there exists a $\mZ$-valued $n\times n$-matrix $(a_{ij})$ with values $a_{ii}=2$ on the diagonal and $0\leq -a_{ij}<\ord q_{ii}$ for $i\neq j$, such that
\begin{equation}
q_{ij}q_{ji}=q_{ii}^{a_{ij}}\qquad \text{ for all }i,j.
\end{equation}
This implies that $(a_{ij})$ is a generalized Cartan matrix which may have several connected components. We denote the collection of these by $\chi$.
\item[(d)]
The braiding $(q_{ij})$ is of \emph{Drinfeld--Jimbo type (DJ-type)} if it is of Cartan type and there exist positive integers $d_1,\ldots, d_n$ such that for all $i,j$, $d_i a_{ij}=d_j a_{ji}$ (hence the matrix $(a_{ij})$ is symmetrizable), and for any $J\in \chi$, there exists a scalar $q_J\neq 0$ in $k$ such that $q_{ij}=q_J^{d_ia_{ij}}$ for any $i\in I$, and $j=1,\ldots, n$.
\end{enumerate}
\end{definition}

Some observations can be made about the Nichols algebras associated to braid\-ed vector spaces of DJ-type. First, observe that for a braiding of Cartan type with connected components $I_1,\ldots,I_n\in \chi$, we have that $\cB(V)$ is the braided tensor product $\cB(V_{I_1})\otimes \ldots\otimes \cB(V_{I_n})$ (\cite[Lemma 4.2]{AS4}).
Further, for $V$ with braiding $(q_{ij})$ of DJ-type where $q_{ii}$ are generic, the Nichols algebra can be computed explicitly by the quantum Serre relations (\cite[Theorem 15]{Ros}):
\[
\cB(V)=k\langle x_1,\ldots,x_n \mid \un{\ad}(x_i)^{1-a_{ij}}(x_j)=0, \forall i\neq j\rangle.
\]

We now bring the growth condition of finite \emph{Gelfand--Kirillov dimension} (GK-dimension) into the picture, using characterization results of \cite{Ros} of Nichols algebras with this property.

\begin{lemma}[\cite{Ros}]\label{rossolemma} Let $k=\mC$ and $(q_{ij})$ be the matrix of a braiding of diagonal type which is \emph{generic} such that the Nichols algebra $\cB(V)$ has finite Gelfand--Kirillov dimension. Then $(q_{ij})$ is of Cartan type.

Moreover, if the braiding is positive then it is twist equivalent to a braiding of DJ-type with finite Cartan matrix if and only if the GK-dimension is finite.
\end{lemma}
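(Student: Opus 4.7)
The plan is to reduce everything to analyzing the braided adjoint action $\underline{\ad}(x_i)(x_j) = x_ix_j - q_{ij}x_jx_i$ pair-by-pair inside the Nichols algebra $\cB(V)$, since the Nichols ideal of a diagonal braiding is generated by the subalgebras associated to each pair of generators (this is the two-dimensional reduction principle used by Andruskiewitsch--Schneider in \cite{AS}).

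For the first statement (generic $+$ finite GK-dim $\Rightarrow$ Cartan type): fix $i\neq j$ and consider the sequence $y_m := \underline{\ad}(x_i)^m(x_j) \in \cB(V)$. A direct induction using the $q$-binomial identity shows that the braided commutator $[x_i, y_m] = y_{m+1}$ satisfies
\[
\underline{\ad}(x_i)(y_m) = y_{m+1}, \qquad [x_j, y_{m+1}]_{\text{br}} = (m+1)_{q_{ii}}\bigl(1 - q_{ii}^{m}q_{ij}q_{ji}\bigr)\, x_j \cdot y_m,
\]
up to a non-zero scalar (this is essentially the quantum Serre computation). Now the key observation is: if no $y_m$ vanishes, then the elements $y_0, y_1, y_2, \ldots$ are linearly independent in $\cB(V)$ (one checks they lie in distinct multidegrees with respect to the $\mZ^n$-grading), and moreover they generate a free subalgebra in a suitable sense, which forces $\op{GK-dim}\cB(V) = \infty$. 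Hence some $y_{m+1} = 0$. Since $q_{ii}$ is generic, the quantum integer $(m+1)_{q_{ii}} \neq 0$, and so $q_{ii}^m q_{ij}q_{ji} = 1$; taking the smallest such $m \geq 0$ and setting $a_{ij} := -m$ yields exactly the Cartan-type relation $q_{ij}q_{ji} = q_{ii}^{a_{ij}}$ with $a_{ij} \in \mZ_{\leq 0}$. The bound $0 \leq -a_{ij} < \ord q_{ii}$ is automatic from minimality and the generic hypothesis.

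For the second statement (positive case, equivalence to DJ-type with finite Cartan matrix): the forward direction is the easy half: if $(q_{ij})$ is twist equivalent to a DJ-type braiding for a finite Cartan matrix $(a_{ij})$, then by Lemma~\ref{twistlemma} the Nichols algebra is isomorphic to the positive part of the quantum enveloping algebra $U_q^+(\fr{g})$ for a finite-dimensional semisimple Lie algebra $\fr{g}$, which has polynomial growth by the PBW theorem and hence finite GK-dimension. The converse uses the first statement to produce a Cartan matrix $(a_{ij})$, and then requires two things: (i) that this Cartan matrix is of \emph{finite} type, and (ii) that the scalars $q_{ij}$ can be written in DJ-form up to twist. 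Point (i) follows from a growth argument: the Kac classification of generalized Cartan matrices shows that only finite-type matrices give Nichols algebras of polynomial growth (infinite or affine types produce subalgebras of exponential growth via imaginary roots). Point (ii) is where positivity is essential: on each connected component $J \in \chi$, the diagonal entries $q_{ii}$ are positive real numbers, so one can consistently solve $q_{ii} = q_J^{2d_i}$ with $d_i$ the symmetrizing integers of the finite Cartan matrix; twist equivalence lets us then adjust the off-diagonal $q_{ij}$ to match $q_J^{d_i a_{ij}}$ while preserving the invariants $q_{ii}$ and $q_{ij}q_{ji}$.

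The main obstacle I anticipate is the growth lower bound used in point (i), namely ruling out affine and indefinite Cartan matrices: one needs a concrete construction of sufficiently many independent elements in $\cB(V)$ when the Cartan matrix is not of finite type, typically via imaginary roots or via identifying a free subalgebra on two generators when some principal $2\times 2$ submatrix is of indefinite type. Everything else (the pairwise Cartan reduction, twist equivalence, DJ-form recovery) is essentially formal once the correct combinatorial framework is set up.
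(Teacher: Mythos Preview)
The paper does not prove this lemma at all: its entire proof reads ``See \cite{AS3}, Corollary 2.12 and Theorem 2.13.'' The result is Rosso's, quoted via Andruskiewitsch--Schneider, and the paper treats it as a black box. Your proposal is therefore not a comparison case but an attempt to reprove Rosso's theorem from scratch, which is far more than the paper undertakes.

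On the substance of your sketch: the overall architecture is sound, and you correctly identify the pairwise reduction, the role of the iterated braided adjoint $y_m=\underline{\ad}(x_i)^m(x_j)$, and the positivity argument for extracting DJ-form on each connected component. However, the step ``they generate a free subalgebra in a suitable sense, which forces $\op{GK-dim}\cB(V)=\infty$'' is the entire content of Rosso's theorem and cannot be waved through. Linear independence of the $y_m$ (which is immediate from the $\mZ^n$-grading) gives no growth bound by itself: one element per total degree is compatible with GK-dimension $1$. What Rosso actually does is construct a PBW-type basis of the rank-two Nichols algebra via Lyndon words and show that when the Cartan condition fails the set of PBW generators is infinite, forcing exponential growth. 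Your displayed identity for $[x_j,y_{m+1}]_{\text{br}}$ is also not quite the right object: the vanishing criterion one needs is that $\underline{\ad}(x_i)^{m+1}(x_j)=0$ in $\cB(V)$ iff $(m{+}1)_{q_{ii}}!\prod_{s=0}^{m}(1-q_{ii}^{s}q_{ij}q_{ji})=0$, and genericity of $q_{ii}$ then picks out the minimal $s$ with $q_{ii}^{s}q_{ij}q_{ji}=1$. You honestly flag the analogous difficulty in part (ii) (ruling out affine and indefinite Cartan matrices), but the same kind of hard growth estimate is already needed in part (i), and your sketch does not supply it.
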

\begin{proof}
See \cite{AS3}, Corollary 2.12 and Theorem 2.13.
\end{proof}

\begin{corollary}
Let $A=U_{\mC G}(V,V^*)$, for $V$ of separable type, with generic positive braiding $(q_{ij})$. Then the following are equivalent
\begin{itemize}
\item[(i)] $A\cong U_q(\fr{g})$ for $\fr{g}$ a semisimple Lie algebra.
\item[(ii)] The braided $\mC$-vector space $V$ with braiding $(q_{ij})$ is twist equivalent to a braiding of DJ-type with Cartan matrix of finite type.
\item[(iii)] $\cB(V)$ has finite Gelfand--Kirillov dimension.
\item[(iv)] $A$ has finite Gelfand--Kirillov dimension.
\end{itemize} 
\end{corollary}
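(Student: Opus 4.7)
The plan is to run the cycle of implications $(i) \Rightarrow (iv) \Rightarrow (iii) \Rightarrow (ii) \Rightarrow (i)$. The content-bearing steps are $(iii) \Rightarrow (ii)$, which is Rosso's theorem as recalled in Lemma~\ref{rossolemma}, and $(ii) \Rightarrow (i)$, which uses Lemma~\ref{twistdrinfelddoubles} to transport the asymmetric braided Drinfeld double structure of $A$ along a twist.

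For $(i) \Rightarrow (iv)$ and $(iv) \Rightarrow (iii)$, I would argue as follows. The quantum group $U_q(\fr{g})$ for $\fr{g}$ semisimple has a classical PBW decomposition in which the positive and negative parts have polynomial growth in the number of positive roots of $\fr{g}$, and the Cartan part is a finitely generated commutative Laurent polynomial ring, so that its Gelfand--Kirillov dimension is finite. For $(iv) \Rightarrow (iii)$, the PBW theorem of Lemma~\ref{ideallemma} exhibits $\cB(V)$ as a finitely generated graded subalgebra of $A \cong \cB(V) \otimes \mC G \otimes \cB(V^*)$, and monotonicity of Gelfand--Kirillov dimension under inclusion of finitely generated subalgebras yields $\operatorname{GK-dim} \cB(V) \leq \operatorname{GK-dim} A < \infty$.

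For $(iii) \Rightarrow (ii)$, the generic positive braiding $(q_{ij})$ on $V$ satisfies the hypothesis of the second assertion of Lemma~\ref{rossolemma}, which directly provides twist equivalence to a DJ-type braiding with finite Cartan matrix. For $(ii) \Rightarrow (i)$, once $(q_{ij})$ is twist equivalent to a DJ-braiding $(q'_{ij})$ with $q'_{ij} = q_J^{d_i a_{ij}}$ for $J \in \chi$, realize $(V', \Psi')$ as a YD-module over an abelian group $G'$ generated by $k'_i, l'_i$ with characters matching those of $G$, so that the assignment $k_i \mapsto k'_i$ induces the group isomorphism required by Lemma~\ref{twistdrinfelddoubles}. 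This lemma then provides a Hopf-algebra isomorphism $A \cong U_{\mC G'}(V', V'^*)$. Since the braiding is of DJ-type with generic diagonal entries, Rosso's explicit calculation (cited just above Lemma~\ref{rossolemma}) identifies the positive and negative parts of this minimal asymmetric braided Drinfeld double as the quantum Serre quotients attached to the connected components of $(a_{ij})$, and assembling these on the torus $G'$ recovers the standard presentation of $U_q(\fr{g})$ for $\fr{g}$ the semisimple Lie algebra with Cartan matrix $(a_{ij})$.

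The main obstacle will be the group-theoretic matching in step $(ii) \Rightarrow (i)$: Lemma~\ref{twistdrinfelddoubles} requires an explicit group isomorphism $k_i \mapsto k'_i$, and in order to land in $U_q(\fr{g})$ precisely, rather than in a strictly larger multiparameter variant as in Section~\ref{quantumgroupsmulti}, one must invoke the symmetric triangular decomposition hypothesis tacit in Section~\ref{section3}, which via Proposition~\ref{symmetricprop} forces $l_i = k_i^{-1}$ and collapses $G$ to the torus $\mZ^n$ of the target quantum group.
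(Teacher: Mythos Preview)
Your cycle argument is correct and uses the same two content-bearing ingredients as the paper: Rosso's theorem (Lemma~\ref{rossolemma}) for $(iii)\Leftrightarrow(ii)$, and the twist transport of Lemma~\ref{twistdrinfelddoubles} for $(ii)\Rightarrow(i)$. The difference lies in how you link condition $(iv)$ to the rest. The paper does not argue $(iv)\Rightarrow(iii)$ or $(i)\Rightarrow(iv)$ directly; instead it identifies $A$ with a Hopf algebra $U(\cD)$ in the sense of \cite[Theorem~4.3]{AS3} (via Theorem~\ref{drinfeldtheorem}) and then invokes \cite[Theorem~5.2]{AS3}, together with Lemma~\ref{twistdrinfelddoubles}, to obtain $(ii)\Leftrightarrow(iv)$ in one stroke. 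Your route is more elementary and self-contained: $(i)\Rightarrow(iv)$ by the standard PBW count for $U_q(\fr g)$, and $(iv)\Rightarrow(iii)$ by monotonicity of GK-dimension under passage to the subalgebra $\cB(V)\hookrightarrow A$ furnished by Lemma~\ref{ideallemma}. This avoids the external citation at the cost of spelling out two easy facts. Your final paragraph correctly flags the genuine subtlety in $(ii)\Rightarrow(i)$, namely that Lemma~\ref{twistdrinfelddoubles} needs a matching of the underlying groups, and that landing precisely in $U_q(\fr g)$ rather than a multiparameter variant uses the ambient symmetry assumption of Section~\ref{section3}; the paper is equally terse on this point.
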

\begin{proof}
The equivalence of (ii) and (iii) is the statement of Lemma \ref{rossolemma} due to \cite{Ros}. Using Lemma \ref{twistdrinfelddoubles} we find that (ii) implies (i), while it is clear that (i) implies (ii). In fact, the GK-dimension of $\cB(V)$ for $V$ of DJ-type equals the number of positive roots \cite[2.10(ii)]{AS3}. Further, we observed that $A$ is of the form $U(\cD)$ in \cite[Theorem 4.3]{AS3} in Theorem \ref{drinfeldtheorem} provided that $V$ has finite Cartan type. This observation (together with Lemma~\ref{twistdrinfelddoubles}) gives that (ii) is equivalent to (iv) using Theorem 5.2 in \cite{AS3}.
\end{proof}

\begin{corollary}
The only indecomposable bialgebras with a symmetric triangular decomposition on $\cB(V)\otimes k\mZ^n\otimes \cB(V^*)$ of separable type, such that $V=\mC\langle v_1,\ldots,v_n\rangle$ is of positive diagonal type, and that no $v_i$ commutes with all of $V^*$ are isomorphic to $\Ug$ for a semisimple Lie algebra $\mathfrak{g}$.
\end{corollary}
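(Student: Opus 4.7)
The plan is to reduce the situation to the previous corollary and then invoke its implication (ii)$\Rightarrow$(i). First, under the hypothesis that no $v_i$ commutes with all of $V^*$, the separability assumption combined with Theorem~\ref{mainclassificationthm} (which forces $\gamma_{ij}=0$ for $i\neq j$) gives $\gamma_{ii}\neq 0$ for every $i$, i.e.\ $A$ is non-degenerate. Since the underlying space of $A$ is $\cB(V)\otimes k\mZ^n\otimes \cB(V^*)$, Lemma~\ref{ideallemma} identifies $A$ with the maximal quotient $U_{\mC\mZ^n}(V,V^*)$, and Theorem~\ref{drinfeldtheorem} realizes $A$ as an asymmetric braided Drinfeld double. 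The symmetric hypothesis refines this to a braided Drinfeld double in the sense of \cite{Maj2,Lau}, whence the pairing between $\cB(V)$ and $\cB(V^*)$ is a pairing of braided Hopf algebras in $\HYD$.

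Next I would verify the hypotheses of the previous corollary, that the braiding $(q_{ij})$ is generic and positive. Positivity is given. For genericity, positivity yields $q_{ii}\in\mR_{>0}$, while non-degeneracy and indecomposability exclude $q_{ii}=1$: otherwise $k_i$ would act trivially on $v_i$, and in the symmetric setting the $i$th generator would decouple from the rest of the Cartan-type interaction, contradicting indecomposability. Thus each $q_{ii}$ is positive real and distinct from $1$, hence not a root of unity.

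The crucial step is to establish condition (ii) of the previous corollary, namely that $(q_{ij})$ is twist equivalent to a Drinfeld--Jimbo braiding of finite Cartan type. Here the symmetric condition is essential: it identifies the braiding on $V^*$ with the inverse-transpose of the braiding on $V$ under the dual pairing, which is exactly the situation in which the pairing of Nichols algebras $\cB(V^*)\otimes\cB(V)\to\mC$ is non-degenerate. A computation parallel to that leading to relation~(\ref{cartandatum}) in the Andruskiewitsch--Schneider classification then extracts integers $a_{ij}$ with $q_{ij}q_{ji}=q_{ii}^{a_{ij}}$, producing a generalized Cartan matrix; indecomposability of $A$ makes this matrix indecomposable, and Lemma~\ref{rossolemma} combined with positivity forces it to be of finite type, since otherwise $\cB(V)$ would have infinite Gelfand--Kirillov dimension, incompatible with being a non-degenerately paired Nichols algebra inside the symmetric Drinfeld double on $\cB(V)\otimes k\mZ^n\otimes\cB(V^*)$. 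Applying (ii)$\Rightarrow$(i) of the previous corollary then yields $A\cong U_q(\fr{g})$ for the semisimple Lie algebra $\fr{g}$ with Cartan matrix $(a_{ij})$.

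The main obstacle I expect is the extraction of the integer Cartan exponents $a_{ij}$ from the symmetric condition together with the non-degenerate pairing on Nichols algebras, and then arguing that positivity plus the paired structure precludes non-finite Cartan types. The first step requires careful bookkeeping of how duality constrains diagonal braidings once the braided tensor and cotensor algebras are quotiented to Nichols algebras; the second step rests on the Rosso-style control of Gelfand--Kirillov dimension for Nichols algebras of diagonal type quoted in Lemma~\ref{rossolemma}.
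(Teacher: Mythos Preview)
Your overall strategy matches the paper's: extract non-degeneracy from the hypothesis that no $v_i$ commutes with all of $V^*$, identify $A$ with $U_{\mC\mZ^n}(V,V^*)$, and reduce to the previous corollary. The paper's proof is correspondingly terse, citing Theorem~\ref{mainclassificationthm} together with ``the results of Rosso'' for the Cartan-matrix step, and noting (as you do) that the non-commuting hypothesis gives $\gamma_{ii}\neq 0$.

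However, two of your intermediate arguments are incorrect. First, genericity is already part of the hypothesis: in the definition quoted from \cite{AS3} in the paper, a \emph{positive} braiding is by definition generic, so there is nothing to prove here. Your indecomposability argument for $q_{ii}\neq 1$ is thus unnecessary, and in any case wrong: indecomposability concerns the graph $\Gamma_A$, which is governed by the group degrees $k_i,l_i$, not by the character values $q_{ii}$.

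Second, and more seriously, your extraction of Cartan integers from the symmetric condition does not work. In the symmetric case one has $l_i=k_i^{-1}$, and condition~(\ref{characterrequirement}) then forces $q_{ij}=q_{ji}$; but symmetry of the braiding matrix is far weaker than the Cartan relation $q_{ij}q_{ji}=q_{ii}^{a_{ij}}$ for integers $a_{ij}$. For instance, positive reals $q_{ii}$, $q_{ij}$ with $q_{ij}^2$ not an integral power of $q_{ii}$ give a symmetric positive diagonal braiding that is not of Cartan type. Likewise, your assertion that infinite GK-dimension is ``incompatible with being a non-degenerately paired Nichols algebra inside the symmetric Drinfeld double'' is unsubstantiated: non-degenerate duality between $\cB(V)$ and $\cB(V^*)$ imposes no growth restriction on either factor. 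The paper does not attempt these arguments --- it simply defers to Rosso's classification via Lemma~\ref{rossolemma}, taking the finite-type conclusion from that source rather than deriving it from the symmetric or pairing hypotheses.
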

\begin{proof}
This follows from the classification in Theorem \ref{mainclassificationthm}, combined with the  results of Rosso.  The Lie algebra $\fr{g}$ is determined by the Cartan matrix one obtains under twist equivalence in Lemma \ref{rossolemma}.
The technical condition that no $v_i$ commutes with all of $V^*$ ensures that $[f_i,v_i]\neq 0$ for a dual basis $f_1,\ldots,f_n$ of $V^*$, resembling the non-degeneracy condition that the scalars $\gamma_{ii}\neq 0$ in Theorem \ref{drinfeldtheorem}.
\end{proof}

This is a characterization for quantum groups at generic parameters. The work surveyed in \cite{AS,AS2} on finite-dimensional pointed Hopf algebras can be viewed as a characterization of small quantum groups.
The triangular decomposition can be interpreted as the case where the graph $\Gamma$ described in \ref{classificationsurvey} has two connected components, such that the corresponding generators for the two components give dually paired braided Hopf algebras.

The characterization suggests that if we are looking for examples outside of DJ-type, we have to consider braidings of generic Cartan type which are not positive. In fact, \cite[2.6]{AS3} gives an example that is generic of Cartan type, but not of DJ-type. We compute the associated double here:

\begin{example}
Let $G=\langle k_1,k_2\rangle\cong C_\infty\times C_\infty$ be a free abelian group with two generators. We define a two-dimensional YD-module $V$ over $G$ on generators $v_1$ of degree $k_1$, $v_2$ of degree $k_2$ via
\begin{align*}
k_1\triangleright v_1&=q v_1,&k_1\triangleright v_2&=q^{-1}v_2,&k_2\triangleright v_1&=q^{-1}v_1,&k_2\triangleright v_2&=-qv_2.
\end{align*}
Lemma 2.1 in \cite{AS3} shows that
\[
\cB(V)=\langle v_1,v_2\mid \un{\ad}(v_1)^3(v_2)=\un{\ad}(v_2)^3(v_1)=0\rangle.
\]
The asymmetric braided Drinfeld double $U_{\mC G}(V,V^*)$ is in fact a braided Drinfeld double if we define $V^*$ to be the dual YD-module. It is the Hopf algebra given on $\cB(V)\otimes \mC G\otimes \cB(V^*)$, subject to the relations
\begin{align*}
[f_1,v_i]&=\delta_{1,i}\frac{k_1-k_1^{-1}}{q^{1/2}-q^{-1/2}}, &[f_2,v_i]&=\delta_{2,i}\frac{k_2-k_2^{-1}}{iq^{1/2}+iq^{-1/2}},&\\
k_1v_2&=q^{-1}v_2k_1,&k_2v_1&=q^{-1}v_1k_2,\\
k_1v_1&=qv_1k_1,&k_2v_2&=-qv_2k_2,\\
k_1f_2&=qf_2k_1,&k_2f_1&=qf_1k_2,\\
k_1f_1&=q^{-1}f_1k_1,&k_2f_2&=-q^{-1}f_2k_2,
\end{align*}
and with coproducts
\begin{align*}
\Delta(v_i)&=v_i\otimes k_i+1\otimes v_i,& \Delta(f_i)&=f_i\otimes 1+k_i^{-1}\otimes f_i.
\end{align*}
\end{example}

Apart from such examples, we can also include examples where free and nilpotent generators are combined, hence capturing features of both small and generic quantum  groups. Here is such an example of small rank:

\begin{example}
Let $G=C_\infty\times C_p=\langle g_{\infty}\rangle \times\langle g_p\rangle$ the product of an infinite cyclic group and one of order $p$. We define a 2-dimensional YD-module over $G$ on $\mC v_\infty\oplus \mC v_p$, where $v_\infty$ has degree $g_\infty$, and $v_p$ has degree $g_p$. The group action is given by
\begin{align*}
g_p\triangleright v_p&=\xi_pv_p, &g_p\triangleright v_\infty&=\eta_p v_\infty,\\
g_\infty\triangleright v_p&=\eta_p^{-1}v_p, &g_\infty \triangleright v_\infty&=\eta_\infty v_\infty,
\end{align*}
where scalars with a subscript $p$ are primitive $p$th roots of unity, and $\eta_\infty$ is generic. We can now compute the Nichols algebra with generators $v_p$ and $v_\infty$. It is given by
\[
\cB(V)=\mC\langle v_p,v_\infty \rangle /{(v_p^p, v_pv_\infty-\eta_pv_\infty v_p )}.
\]
 We denote the dual YD-module by $V^*$ with generators $f_p$, $f_\infty$.
 
 The braided Drinfeld double on $\cB(V)\otimes k(C_p\times C_\infty)\otimes \cB(V^*)$ of the braided Hopf algebra $\cB(V)$ is a quantum group that combines both $u_q(\fr{sl}_2)$ and $U_q(\fr{sl}_2)$:

\begin{align*}
[f_p,v_i]&=\delta_{i,p}\frac{g_p-g_p^{-1}}{\xi_p^{1/2}-\xi_p^{-1/2}}, &[f_\infty,v_i]&=\delta_{\infty,i}\frac{g_\infty-g_\infty^{-1}}{\eta_\infty^{1/2}-\eta_\infty^{-1/2}},\\
g_pv_p&=\xi_pv_pg_p,&g_pv_\infty&=\eta_p v_\infty g_p,\\
g_\infty v_p&=\eta_p^{-1} v_pg_\infty,&g_\infty v_\infty&=\eta_\infty v_\infty g_\infty,\\
g_pf_p&=\xi_p^{-1}f_pg_p,&g_pf_\infty&=\eta_p^{-1}f_\infty g_p,\\
g_\infty f_p&=\eta_qf_pg_\infty,&g_\infty f_\infty&=\eta_\infty^{-1}f_\infty g_\infty.&
\end{align*}
and with coproducts
\begin{align*}
\Delta(v_i)&=v_i\otimes g_i+1\otimes v_i,& \Delta(f_i)&=f_i\otimes 1+g_i^{-1}\otimes f_i, &\text{for }i=p,\infty.
\end{align*}
Choosing instead $g_\infty\triangleright v_p=\xi_\infty v_p$ we obtain more examples where the Nichols algebra will involve other relations depending on choice of $\xi_\infty$.
\end{example}

\subsection{Classes of Pointed Hopf Algebras by Radford}\label{radford}

In \cite{Rad}, a class of pointed Hopf algebras $U_{(N,\nu, \omega)}$ was introduced (see also \cite{Gel} for generalizations). These Hopf algebras are associated to the datum of a positive integer $N$ and $1\leq \nu <N$ such that $N$ does not divide $\nu^2$, and $\omega\in k$ is a primitive $N$th root of unity in a field $k$. Denote $q:=\omega^\nu$ and $r=\abs{q^\nu}=\abs{\omega^{\nu^2}}$. We let $C_N$ denote a cyclic group of order $N$ generated by an element $a$.

The algebra $U_{(N,\nu,\omega)}$ is the braided Drinfeld double of the YD-module Hopf algebra $U_+:=k[x]/(x^r)$ over $C_p$, with grading given by $x\mapsto a^{\nu}\otimes x$ and action $a\triangleright x=q^{-1} x$. Note that $U_+$ is the Nichols algebra of the one-dimensional YD-module $kx$. The coalgebra structure is given by $\Delta(x)=x\otimes a^{\nu}+1\otimes x$, and $\Delta(y)=y\otimes 1+a^{-\nu}\otimes y$ for the dual generator $y$. Note further that the other Hopf algebra $H_{(N,\nu,\omega)}$ introduced by Radford is simply the bosonization $U_+\rtimes kC_N$ in this set-up. The algebras $U_{(N,\nu, \omega)}$ and $H_{(N,\nu,\omega)}$ are not indecomposable unless $\nu=1$. To obtain indecomposable pointed Hopf algebras, we can consider the subalgebras generated by $x, y$ and $a^{\nu}$ (respectively, $x$ and $a^{\nu}$). Since these only depend on the choices of $r$ and $q$ we denote these Hopf algebras by $U_{(r,q)}$ (respectively, $H_{(r,q)}$). Note that $U_{(r,1,q)}=U_{(r,q)}$.

\subsection{Quantum Group Analogues in Other Contexts}\label{conclusion}

To conclude this paper, we would like to adapt the point of view that quantum groups can also be studied over other Hopf algebras $H$ than the group algebra. For this, one can, motivated by the results of this paper, look for Hopf algebras $A$ with triangular decomposition over $H$. The property over a group that $A$ is of separable type can be generalized by requiring that the YD-modules $V$ with respect to the left and right coactions $\delta_r$ and $\delta_l$ are a direct sum of distinct (one-dimensional) simples. One-dimensionality of the simples is however a strong restriction.

As a first example, we can consider the case where $H$ itself is primitively generated, i.e. $H=k[x_1, \ldots, x_n]$ over a field of characteristic zero. If $A$ is a bialgebra with triangular decomposition over $H$, then  for $v\in V$, $\Delta(v)\in V\otimes H+H\otimes V$ implies that $\Delta(v)$ in fact equals $v\otimes 1+1\otimes v$ using the counitary condition. This gives that $A$ is generated by primitive elements and hence is a pointed Hopf algebra that is connected (i.e. the group-like elements are the trivial group). Now $A$ is in particular cocommutative, so Theorem 5.6.5 in \cite{Mon} implies (for $\operatorname{char} k=0$) that $A=U(\fr{g})$ where $\fr{g}$ is the Lie algebra of primitive elements in $A$. From this point of view, all quantum groups over $H=k[x_1,\ldots, x_n]$ are simply the classical universal enveloping algebras. Investigating bialgebras with triangular decomposition over other Hopf algebras $H$ can be the subject of future research.


\begin{acknowledgements}
A preliminary version of this paper is part of the PhD thesis of the author completed at the University of Oxford. I am grateful to my PhD advisor Prof Kobi Kremnizer for his guidance. I would also like to thank Dr Yuri Bazlov, Prof Arkady Berenstein, Prof Dan Ciubotaru and Prof Shahn Majid for helpful discussions on the subject matter.

\end{acknowledgements}


\bibliography{biblio}
\bibliographystyle{spmpsci}

\end{document}